\DeclareMathOperator{\Tr}{Tr}
\DeclareMathOperator{\ord}{ord}
\DeclareMathOperator{\Spec}{Spec}
\newcommand{\An}{\mathrm{an}}
\newcommand{\triv}{\mathrm{triv}}
\newcommand{\Div}{\mathrm{div}}
\newcommand{\NA}{\mathrm{NA}}
\newcommand{\gf}{\mathrm{gf}}
\newcommand{\val}{\mathrm{val}}
\newcommand{\PSH}{\mathrm{PSH}}
\newcommand{\NS}{\mathrm{NS}}
\newcommand{\QPSH}{\mathrm{QPSH}}
\newcommand{\EC}{\mathrm{EC}}
\newcommand{\ddc}{\mathrm{dd}^{\mathrm{c}}}
\newcommand{\sups}{\operatorname*{sup*}}
\newcommand{\cn}{\stackrel{\sim}{\longrightarrow}}
\newtheorem{theorem}{Theorem}[section]
\newtheorem{lemma}[theorem]{Lemma}
\newtheorem{proposition}[theorem]{Proposition}
\newtheorem{corollary}[theorem]{Corollary}
\theoremstyle{definition}
\newtheorem{definition}[theorem]{Definition}
\theoremstyle{remark}
\newtheorem{remark}[theorem]{Remark}
\numberwithin{equation}{section}
\begin{document}

\title{Operations on transcendental non-Archimedean metrics}


\author{Mingchen Xia}
\address{Chalmers Tekniska Högskola, Göteborg, Sweden}
\curraddr{}
\email{xiamingchen2008@gmail.com}
\thanks{The author benefited from discussions with Sébastien Boucksom and Pietro Mesquita-Piccione. In particular, the possibility of obtaining \cref{thm:comp_traceop} was suggested by Sébastien. The author would like to thank Tam\'as Darvas and Rémi Reboulet for their comments on the draft and the referee for numerous valuable suggestions. Je ne remercie pas le gouvernement français, car son inefficacité proverbiale m'a offert une période éprouvante pendant la rédaction de cet article.
The author is supported by Knut och Alice Wallenbergs Stiftelse grant KAW 2021.0231.}

\subjclass[2020]{Primary 32Q99}

\date{\today}

\dedicatory{Dedicated to Bo Berndtsson.}

\begin{abstract}
We define the basic pluripotential-theoretic operations in terms of the transcendental theory of non-Archimedean metrics introduced in \cite{DXZ23}. In particular, we establish that the analogue of Boucksom--Jonsson's envelope conjecture holds in our theory.
\end{abstract}

\maketitle

\tableofcontents

\section{Introduction}
Let $X$ be a reduced irreducible unibranch compact K\"ahler space of dimension $n$ and $\theta$ be a closed real smooth $(1,1)$-form on $X$. Assume that $\PSH(X,\theta)\neq \emptyset$. The purpose of this paper is to study the pluripotential theory of non-Archimedean metrics in the cohomology class of $\theta$ following the approach in \cite{DXZ23}. 

\subsection{Background}
The non-Archimedean pluripotential theory has been developed by many authors for different purposes in the last three decades. The study of semipositive metrics on line bundles on non-Archimedean spaces dates back to the breakthrough of Zhang \cite{Zhang95}. In \cite{CL06}, Chambert-Loir initiated the study of the non-Archimedean Monge--Amp\`ere measures. The pluripotential theory on non-Archimedean curves was developed in detail in the thesis of Thuillier \cite{Thu}. All these developments together lead to the natural question: To what extent is there a pluripotential theory on Berkovich spaces similar to its complex counterpart? 

This question has drawn the attention of many authors. In particular, so far we know that there are two general approaches in the literature. 

The first approach is the local approach, initiated by Chambert-Loir--Ducros \cite{CLD12} and Gubler \cite{Gub16}. In this approach, one defines the notions of differential forms and currents on a Berkovich space based on local tropicalizations. Then one can define the Monge--Amp\`ere operator using the usual differential formula. The problem is that this approach only works for regular plurisubharmonic functions and completely fails when a singularity is presented. This is an essential obstacle for applications to complex geometry. 

The second approach was initiated by Boucksom--Favre--Jonsson in \cite{BFJ15, BFJ16}, following the original ideas of Zhang. In the trivially valued case, this theory was further developed by Boucksom--Jonsson \cite{BJ18b}.
In this approach, one first considers metrics obtained from semipositive models and then defines general global plurisubharmonic metrics using approximations. This approach allows singularities but does not work in the local situation. It does not yield a sheaf of plurisubharmonic metrics. Both approaches have their own drawbacks. We do not have a completely satisfactory non-Archimedean pluripotential theory yet.

\subsection{The transcendental theory}
The non-Archimedean pluripotential theory is closely related to complex geometry. We refer to \cite{BBJ21} and the references therein. The basic idea is that the degeneration in the space of K\"ahler potentials can be characterized using global plurisubharmonic metrics on the non-Archimedean space. This idea has been intensively studied in \cite{BBJ21, Li20, DX21, XiaPPT}. 

We observe that there is a strong obstacle in applying the non-Archimedean geometry to complex geometry: The envelope conjecture \cite[Conjecture~5.14]{BJ18b} of Boucksom--Jonsson remains open. This conjecture together with its various consequences is indispensable for various applications. 

In our previous work \cite{DXZ23}, we introduced a different framework for the non-Archimedean global pluripotential theory based on the series of works starting from \cite{BBJ21, RWN14}. Roughly speaking, we represent a non-Archimedean metric in $\PSH^{\NA}(X,\theta)$ by a \emph{test curve} $\Gamma=(\Gamma_{\tau})_{\tau}$, a concave curve of $\mathcal{I}$-model potentials. Due to the pathology caused by zero-mass, the rigorous definition requires a projective limit construction as well, we refer to \cref{def:PSHNA1} for the rigorous definition. In this short introduction, we will only talk about test curves to simplify our presentation.

In the algebraic setting, when Boucksom--Jonsson's theory is applicable, there is an obvious comparison map sending $\Gamma$ to a function on the non-Archimedean space. See \cref{def:napotentialdaleth} for the details. The main result in \cite{DXZ23} asserts that when $X$ is smooth, this comparison map is in fact a bijection to Boucksom--Jonsson's theory, hence giving a complex representation of the non-Archimedean pluripotential theory. When $X$ is only unibranch, Boucksom--Jonsson's theory is still applicable. In this case, we prove the following result:
\begin{theorem}[{=\cref{cor:BJconj}}]
    Assume that $X$ is a reduced irreducible unibranch projective variety over $\mathbb{C}$ and $L$ is a pseudoeffective line bundle on $X$. Then the following are equivalent:
    \begin{enumerate}
        \item Boucksom--Jonsson's envelope conjecture holds for $(X,L)$;
        \item The comparison map extends to a bijection from the space of non\hyp{}Archimedean potentials in the sense of \cite{DXZ23} to the corresponding space in the sense of \cite{BJ18b}.
    \end{enumerate}
\end{theorem}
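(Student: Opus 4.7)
The plan is to handle the two implications separately, noting that injectivity of the comparison map (call it $\Phi$) from test curves to non-Archimedean potentials should be a formal fact from [DXZ23] that does not rely on the envelope conjecture, so the real content concerns surjectivity.

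For (1) $\Rightarrow$ (2) I would, given $\phi^{\NA} \in \PSH^{\NA}(X,L)$ in the sense of [BJ18b], construct a candidate test curve $\Gamma = (\Gamma_\tau)_\tau$ by setting $\Gamma_\tau$ to be, at each level $\tau$, the largest $\mathcal{I}$-model element of $\PSH(X,\theta)$ whose generalized Lelong values $v(\Gamma_\tau)$ satisfy $v(\Gamma_\tau) \geq \tau - \phi^{\NA}(v)$ for every relevant valuation $v$. This is the natural Legendre-type inverse of the formula defining $\Phi$. The envelope conjecture for $(X,L)$ is precisely the input that guarantees that this sup is itself $\mathcal{I}$-model and lies in $\PSH(X,\theta)$, rather than only after upper semicontinuous regularization. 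Once this is in place, concavity of $\tau \mapsto \Gamma_\tau$ comes from the linearity in $\tau$ of the defining constraint, the asymptotic behavior as $\tau \to \pm\infty$ is controlled by the pseudoeffectivity of $L$, and the identity $\Phi(\Gamma) = \phi^{\NA}$ follows by a Legendre inversion argument, mirroring the smooth case of [DXZ23].

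For the reverse direction (2) $\Rightarrow$ (1), assuming $\Phi$ is a bijection, every Boucksom--Jonsson potential has a test curve preimage whose components are genuine $\mathcal{I}$-model elements of $\PSH(X,\theta)$. Applying $\Phi$ to the preimage of a candidate BJ envelope realizes that envelope as the image of a test curve and therefore as a function already equal to its own upper semicontinuous regularization; this is exactly the content of the envelope conjecture. The main obstacle sits in (1) $\Rightarrow$ (2), namely verifying that the envelope-based construction of the $\Gamma_\tau$ genuinely satisfies $\Phi(\Gamma) = \phi^{\NA}$: this requires an interchange between $\mathcal{I}$-model envelopes of complex potentials and Lelong number computations on the Berkovich side, and in the unibranch (but possibly singular) setting the tools from the smooth case need adaptation. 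The envelope conjecture is what bridges the two sides and closes the argument.
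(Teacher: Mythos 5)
Your strategy is genuinely different from the paper's, and in one direction it has a logical gap.

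The paper proves $(1)\Rightarrow(2)$ by a commutative-diagram argument: it takes a projective resolution $\pi\colon Y\to X$ and uses three ingredients that are already established — (a) $\pi^*\colon\PSH^{\NA}(X,\theta)\to\PSH^{\NA}(Y,\pi^*\theta)$ is always a bijection (\cref{subsec:funct}), (b) $\PSH^{\NA}(Y,\pi^*\theta)\to\PSH(\pi^*L^{\An})$ is a bijection because $Y$ is smooth (\cref{thm:DXZ}), and (c) under the envelope conjecture for $(X,L)$ the pull-back $\PSH(L^{\An})\to\PSH(\pi^*L^{\An})$ is a bijection by \cite[Lemma~5.13]{BJ18b}. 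The dotted comparison map is then forced to exist and to be a bijection, and uniqueness is immediate from \cref{thm:pshdetbydiv}. By contrast, you propose a direct Legendre-type construction of the inverse on $X$ itself. This could in principle work, but it requires re-establishing in the singular unibranch setting a body of facts that the paper gets for free by passing to $Y$ (concavity, $\mathcal{I}$-model-ness of the envelopes at each level, the Legendre inversion identity), and you flag but do not resolve these adaptations. The resolution trick is not merely cosmetic: it is what lets the proof avoid redoing the transcendental machinery of \cite{DXZ23} on the singular space.

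For $(2)\Rightarrow(1)$, your argument as written does not quite go through. You say one should apply $\Phi^{-1}$ to "the candidate BJ envelope," but that candidate — the upper-semicontinuous regularization of a pointwise supremum of $\phi_i$ — is precisely what the envelope conjecture asserts to be a psh metric; a priori it need not lie in $\PSH(L^{\An})$, so it has no preimage under $\Phi$. The correct order of operations is the one the paper uses: lift each $\phi_i$ individually to $\Gamma^i\in\PSH^{\NA}(X,\theta)$ (this is legitimate since each $\phi_i$ is a psh metric), form the regularized supremum $\sups_i\Gamma^i$ on the transcendental side — which always exists and is a genuine element of $\PSH^{\NA}(X,\theta)$ by \cref{thm:enve} — and then push forward, checking via \cref{thm:enve} that $(\sups_i\Gamma^i)^{\An}$ agrees with the pointwise supremum of the $\phi_i$ on $X^{\Div}$. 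That is what produces the required psh metric on $L^{\An}$ and proves the envelope conjecture. So the key lemma you are missing in this direction is \cref{thm:enve}, the unconditional transcendental analogue of the envelope statement.
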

Note that Boucksom--Jonsson's envelope conjecture is known to hold when $X$ is smooth, as confirmed in \cite{BJ22b}. A different proof was given in \cite{DXZ23}. 

Using the comparison map, we can effectively translate the pluripotential-theoretic operations in Boucksom--Jonsson's theory to our transcendental theory. It turns out that in most cases, these translations can be extended to more general situations where the comparison fails or when Boucksom--Jonsson's theory is not applicable. This is exactly what we do in this paper. The final result is summarized in \cref{tab:corr}. It can be regarded as a duality between non-Archimedean geometry and convex geometry.

\begin{table}[h]
\begin{tabular}{|l|l|}
\hline
\textbf{NA potentials} & \textbf{Test curves} \\ \hline
Addition by a constant            & Translation                                                \\ \hline
Maximum                           & Concave envelope of maximum                                \\ \hline
Decreasing limit         & Decreasing limit                                           \\ \hline
Increasing limit                  & Increasing limit                                           \\ \hline
Regularized supremum              & Increasing limit of concave envelopes of maxima            \\ \hline
Rescaling                         & Rescaling and reparametrization                            \\ \hline
Addition                          & Infimal involution                                         \\ \hline
Restriction to subvarieties       & Trace operator                                             \\ \hline
\end{tabular}
\caption{Correspondence non-Archimedean/convex}
\label{tab:corr}
\end{table}

Most results from this table are quite straightforward to experts. The only exception is perhaps the last line (=\cref{thm:comp_traceop}), where we establish a quantitative version of the fact that a non-Archimedean plurisubharmonic metric is determined by its restriction to divisorial points. We hope that this result, after suitable extension, could serve to give a proof of Boucksom--Jonsson's envelope conjecture in general.

As a particular case from this table, we have
\begin{theorem}[{=\cref{thm:enve}}]
    The transcendental analogue of the envelope conjecture holds in the theory of \cite{DXZ23}.
\end{theorem}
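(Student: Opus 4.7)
My plan is to deduce the transcendental envelope conjecture directly from the correspondence table, specifically the fifth row ``regularized supremum $\leftrightarrow$ increasing limit of concave envelopes of maxima.'' The statement to be proved should assert that for every uniformly bounded family $(\varphi_i)_{i\in I}$ in $\PSH^{\NA}(X,\theta)$, the regularized supremum $\sups_{i\in I}\varphi_i$ again lies in $\PSH^{\NA}(X,\theta)$. In the Boucksom--Jonsson setting this closure property is precisely what fails to be automatic without their envelope conjecture; in the transcendental framework it should instead be a theorem, powered by the fact that complex-analytic envelopes of $\theta$-psh potentials are genuinely available on the test-curve side.

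Concretely, I would proceed as follows. First, represent each $\varphi_i$ by its test curve $\Gamma^{(i)}=(\Gamma^{(i)}_\tau)_\tau$ of $\mathcal{I}$-model $\theta$-psh potentials. Second, for each finite $F\subset I$, form the pointwise maximum $\tau\mapsto \max_{i\in F}\Gamma^{(i)}_\tau$ (taken as a potential on $X$ at each fixed $\tau$) and then pass to its concave envelope in $\tau$, producing a test curve $\Gamma^F$; the preservation of the $\mathcal{I}$-model property under finite maxima is expected to be already in hand, and the passage to the concave envelope is covered by the second row of the table. Third, the net $\{\Gamma^F\}_F$ is monotone increasing in $F$, so by the ``increasing limit'' entry in the table one can form the test curve $\Gamma:=\lim_F \Gamma^F$.

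It remains to verify that $\Gamma$ actually represents $\sups_{i\in I}\varphi_i$ on the NA side, i.e.\ the intertwining identity between the convex operation ``increasing limit of concave envelopes of maxima'' and the NA operation ``regularized supremum.'' The main obstacle is exactly establishing this intertwining together with the compatibility with the projective-limit construction alluded to in \cref{def:PSHNA1}, which is designed to handle zero-mass test curves. A Legendre-type duality between test curves and NA potentials should force the intertwining at the level of each individual piece of the projective system; the delicate part is controlling the zero-mass behaviour at the transition maps, where concave envelopes may push mass to the boundary of the test-curve parameter. Once this compatibility is established, $\Gamma$ is by construction the test curve of the regularized supremum, which therefore belongs to $\PSH^{\NA}(X,\theta)$, yielding the desired transcendental envelope conjecture.
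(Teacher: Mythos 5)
Your overall strategy — reduce to finite maxima, then pass to an increasing net and take the limit — does match the skeleton of the paper's proof (which invokes \cref{cor:napotmaxgen} for the finite case and then reduces to an increasing net in $\PSH^{\NA}(X,\theta)_{>0}$). However, your proposal has a genuine gap precisely where the actual content of the theorem lies, and you have yourself flagged it: ``it remains to verify that $\Gamma$ actually represents $\sups_i\varphi_i$ on the NA side'' and ``the main obstacle is exactly establishing this intertwining.'' That intertwining identity
\[
\Bigl(\sups_{i\in I}\Gamma^i\Bigr)^{\An}(c\ord_E)=\sup_{i\in I}\Gamma^{i,\An}(c\ord_E)
\]
\emph{is} the theorem; asserting that ``a Legendre-type duality should force'' it is not a proof. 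The paper closes this step by citing \cite[Lemma~3.13]{DXZ23}, which establishes exactly the requisite convergence of Lelong numbers/NA potentials for an increasing net of positive-mass test curves; without reproving or invoking that lemma, your argument does not terminate.

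Two further points. First, you have slightly misread the content of the statement: membership of $\sups_i\Gamma^i$ in $\PSH^{\NA}(X,\theta)$ is not the hard part in the transcendental framework — it is essentially built into the definitions via \cref{lma:supIbon} and the projective-limit construction. The transcendental envelope theorem is the explicit formula for the NA potential of the regularized supremum on $X^{\Div}$; that is what makes it an analogue of Boucksom--Jonsson's conjecture via \cref{thm:pshdetbydiv}. Second, the reduction to the positive-mass case that you also flag (``zero-mass behaviour at the transition maps'') is again noted but not executed; the paper handles it cleanly by replacing each $\Gamma^i$ with its component $\Gamma^{i,\theta+\omega}$ in $\PSH^{\NA}(X,\theta+\omega)_{>0}$, using that $(\sups_i\Gamma^i)^{\An}$ and $(\bullet)_{\max}$ are both computed from these components. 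Your write-up should carry out both of these steps rather than leaving them as declared obstacles.
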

This result slightly generalizes \cite[Theorem~1.2]{DXZ23} in that the latter only concerns the regularized supremum of increasing nets while the former works for a general bounded from above family.

Given the fact that Boucksom--Jonsson's envelope conjecture is widely open in general, we expect that our theory is more useful on normal varieties. In particular, we hope that the non-Archimedean theory of K-stability could be effectively translated to our theory. The very first step in this direction is carried out in \cite{XiaPPT}. On the other hand, our theory works more generally for transcendental classes on compact K\"ahler manifolds. It should be useful to the transcendental applications as well, for example in the study of cscK metrics as in \cite{SD18} or probably in the study of the transcendental Morse inequality.

On the other hand, although we have developed a non-Archimedean pluripotential theory, it is not entirely clear how to build a non-Archimedean space on which the non-Archimedean plurisubharmonic functions live. This gap is fixed in the very recent work \cite{MP24} of Mesquita-Piccione.

Most results from the current paper have been announced in several conferences. The author hopes that the details presented in the current paper are of interest to general readers as well. We assume that the readers are familiar with the complex pluripotential theory and Berkovich geometry. 

\subsection{Conventions}
In the whole paper, the Monge--Amp\`ere type products are always taken in the non-pluripolar sense. We adopt the convention that $\ddc = \frac{\mathrm{i}}{2\pi}\partial\overline{\partial}$. A variety refers to a separated scheme of finite type over $\mathbb{C}$.

\section{Pluripotential theory on unibranch spaces}
Let $X$ be a reduced irreducible unibranch compact K\"ahler space of dimension $n$ and $\theta$ be a closed real $(1,1)$-form on $X$. Here and in the sequel, a  $(1,1)$-form is always assumed to be smooth.
Assume that $\PSH(X,\theta)\neq \emptyset$. There is a well-developed pluripotential theory in this setting. For the Bedford--Taylor theory, we refer to \cite{Dem85} and for the non-pluripolar theory, we refer to \cite{Xia19}.

Note that we have assumed $X$ to be irreducible. This is not a very severe restriction: Since $X$ is unibranch, its irreducible components are just its connected components. Every result in this paper extends in the obvious way to general reduced unibranch compact K\"ahler spaces.

\subsection{Divisorial valuations}\label{subsec:div}
\begin{definition}
A \emph{prime divisor} over a reduced irreducible compact complex space $Z$ is a connected smooth hypersurface $E\subseteq X'$, where $X'\rightarrow Z$ is a projective resolution.

Two prime divisors $E_1\subseteq X'_1$ and $E_2\subseteq X'_2$ over $Z$ are \emph{equivalent} if there is a common resolution $X''\rightarrow X$ dominating both $X_1'$ and $X_2'$ such that the strict transforms of $E_1$ and $E_2$ coincide. 

The set $Z^{\Div}$ is the set of pairs $(c,E)$, where $c\in \mathbb{Q}_{>0}$ and $E$ is an equivalence class of a prime divisor over $Z$. For simplicity, we will denote the pair $(c,E)$ by $c\ord_E$, although one should not really think of this object as a valuation unless $Z$ is projective and irreducible.
\end{definition}
Note that a prime divisor on $Z$ does not always define a prime divisor over $Z$ if $Z$ is singular.

\begin{definition}\label{def:napotentialdiv}
Given $\varphi\in \PSH(X,\theta)$, we define its \emph{associated non-Archimedean potential} as the map $\varphi^{\An}\colon X^{\Div}\rightarrow \mathbb{R}$ given by  
\[
\varphi^{\An}(c\ord_E)\coloneqq -c\nu(\varphi,E).
\]
Here associated with $E$, there is a projective resolution $\pi\colon Y\rightarrow X$ such that $E$ is a connected smooth hypersurface in $Y$. The notation $\nu(\varphi,E)$ means the generic Lelong number of $\pi^*\varphi$ along $E$. By Siu's semicontinuity theorem, this quantity is independent of the choice of $\pi$.
\end{definition}

\subsection{The pluripotential theory}

\begin{definition}
    Assume that $X$ is smooth. Given $\varphi\in \PSH(X,\theta)$, we define
    \[
    \begin{aligned}
    P_{\theta}[\varphi]=&\sup \left\{ \psi\in \PSH(X,\theta):\psi\leq 0,\int_X \theta_{\varphi}^n=\int_X \theta_{\psi}^n,\varphi\leq \psi+C \text{ for some }C\in \mathbb{R} \right\},\\
    P_{\theta}[\varphi]_{\mathcal{I}}=&\sup \left\{ \psi\in \PSH(X,\theta):\psi\leq 0, \mathcal{I}(k\varphi)=\mathcal{I}(k\psi) \text{ for all }k\in \mathbb{Z}_{>0} \right\}.
    \end{aligned}
    \]
    
    A potential $\varphi\in \PSH(X,\theta)$ is \emph{model} (resp. \emph{$\mathcal{I}$-model}) if $\varphi=P_{\theta}[\varphi]$ (resp. $\varphi=P_{\theta}[\varphi]_{\mathcal{I}}$).

    A potential $\varphi\in \PSH(X,\theta)$ is $\mathcal{I}$-good if $\int_X \theta_{\varphi}^n>0$ and $P_{\theta}[\varphi]=P_{\theta}[\varphi]_{\mathcal{I}}$.
\end{definition}
Here $\mathcal{I}(k\varphi)$ denotes the multiplier ideal sheaf of $k\varphi$.
For a detailed study of these notions, we refer to  \cite{DX21, DX22, Xia22}. 
\begin{definition}
    We say a potential $\varphi\in \PSH(X,\theta)$ is \emph{model} (resp. \emph{$\mathcal{I}$-model}, \emph{$\mathcal{I}$-good}) if for a projective resolution $\pi\colon Y\rightarrow X$, $\pi^*\varphi\in \PSH(Y,\pi^*\theta)$ is model (resp. $\mathcal{I}$-model, $\mathcal{I}$-good).
\end{definition}
All three notions are independent of the choice of $\pi$.

\begin{definition}
    Given $\varphi\in \PSH(X,\theta)$, we define $P_{\theta}[\varphi], P_{\theta}[\varphi]_{\mathcal{I}} \in \PSH(X,\theta)$ as follows: Let $\pi\colon Y\rightarrow X$ be a projective resolution, then we require the following holds:
    \[
    \pi^*P_{\theta}[\varphi]=P_{\pi^*\theta}[\pi^*\varphi],\quad \pi^*P_{\theta}[\varphi]_{\mathcal{I}}=P_{\pi^*\theta}[\pi^*\varphi]_{\mathcal{I}}.
    \]
    By Zariski's main theorem, both $P_{\theta}[\varphi]$ and $P_{\theta}[\varphi]_{\mathcal{I}}$ are uniquely determined. 
\end{definition}
Both operations are clearly independent of the choice of $\pi$.
\begin{definition}
    Two quasi-psh functions $\varphi$ and $\psi$ on $X$ are \emph{$P$-equivalent} if for some K\"ahler form $\omega$ on $X$ such that $\varphi,\psi\in \PSH(X,\omega)$ and both $\omega_{\varphi}$ and $\omega_{\psi}$ have positive masses, we have $P_{\omega}[\varphi]=P_{\omega}[\psi]$. We write $\varphi\sim_P \psi$ in this case.
\end{definition}
Here and in the sequel, when we say a potential has positive mass, we always refer to the non-pluripolar mass.

\begin{definition}
    Two quasi-psh functions $\varphi$ and $\psi$ on $X$ are \emph{$\mathcal{I}$-equivalent} if for some K\"ahler form $\omega$ on $X$ such that both $\varphi,\psi\in \PSH(X,\omega)$, we have $P_{\omega}[\varphi]_{\mathcal{I}}=P_{\omega}[\psi]_{\mathcal{I}}$. We write $\varphi\sim_{\mathcal{I}} \psi$ in this case.
\end{definition}
Both conditions are independent of the choice of $\omega$: For the $\mathcal{I}$-equivalence relation, this follows from \cite[Theorem~2.13]{DX22}. The statement for the $P$-equivalence relation is trickier. As we do not need it in the paper, we omit the proof.

\begin{proposition}\label{prop:decnetPproj}
Suppose that $\varphi^i\in \PSH(X,\theta)$ ($i\in I$) is a decreasing net of $\mathcal{I}$-model potentials and $\inf_i \varphi^i$ has positive mass.
Then for all semipositive  closed real $(1,1)$-form $\omega$ on $X$, we have
\begin{equation}\label{eq:pinfI}
P_{\theta+\omega}\left[\inf_{i\in I}\varphi^i\right]_{\mathcal{I}}=\inf_{i\in I}P_{\theta+\omega}[\varphi^i]_{\mathcal{I}}.
\end{equation}
\end{proposition}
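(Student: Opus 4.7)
Set $\varphi\coloneqq \inf_{i\in I}\varphi^i$ and $\tilde\varphi^i\coloneqq P_{\theta+\omega}[\varphi^i]_{\mathcal{I}}$. I would start by recording that $P_{\theta+\omega}[\,\cdot\,]_{\mathcal{I}}$ is monotone in its argument, which follows from the characterisation that among $\mathcal{I}$-model potentials in a fixed class the pointwise ordering coincides with inclusion of multiplier ideal sequences. This monotonicity gives the easy inequality $P_{\theta+\omega}[\varphi]_{\mathcal{I}}\leq \inf_i \tilde\varphi^i$, and shows that $\{\tilde\varphi^i\}_i$ is a decreasing net of $\mathcal{I}$-model $(\theta+\omega)$-psh potentials whose infimum dominates $\varphi$ and therefore has positive non-pluripolar mass (since adding the semipositive form $\omega$ only increases the mass).

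The key input for the reverse inequality is the following structural result, available from \cite{DX22,Xia22}: for a decreasing net of $\mathcal{I}$-model $\eta$-psh potentials with positive-mass infimum, the infimum is itself $\mathcal{I}$-model and its multiplier ideals coincide with the intersection of the individual multiplier ideals. (This is essentially the $\omega=0$ case of the present proposition and plays the role of the core lemma.) I would apply this result in the class $\theta+\omega$ to $\{\tilde\varphi^i\}$, and then in the class $\theta$ to $\{\varphi^i\}$. Using in addition that $\tilde\varphi^i\sim_{\mathcal{I}}\varphi^i$, this yields
\[
\mathcal{I}(k\inf_i\tilde\varphi^i)=\bigcap_i \mathcal{I}(k\tilde\varphi^i)=\bigcap_i \mathcal{I}(k\varphi^i)=\mathcal{I}(k\varphi)
\]
for every $k\in\mathbb{Z}_{>0}$. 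Since both $\inf_i\tilde\varphi^i$ and $P_{\theta+\omega}[\varphi]_{\mathcal{I}}$ are $\mathcal{I}$-model potentials in $\PSH(X,\theta+\omega)$ with identical multiplier ideal sequences, they must agree, proving \eqref{eq:pinfI}.

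\textbf{Main obstacle.} The genuine content is packed into the structural result quoted in the second paragraph: for a generic decreasing net of psh functions the multiplier ideals can jump at the limit, so both the $\mathcal{I}$-model property and the positive-mass hypothesis must be used essentially. Its proof in \cite{DX22} goes through the valuation-theoretic description of $\mathcal{I}$-model potentials, comparing divisorial Lelong numbers along prime divisors over $X$. Once that lemma is in place, the remainder of the argument is a formal manipulation with monotonicity and multiplier ideals.
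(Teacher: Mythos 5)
Your proposal takes a genuinely different route from the paper, and I want to flag one real concern in the middle of it.

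The paper's proof is entirely in terms of generic Lelong numbers: after reducing to $X$ smooth, it notes both sides of \eqref{eq:pinfI} are $\mathcal{I}$-model in $\PSH(X,\theta+\omega)$, then checks that they have the same generic Lelong numbers along every prime divisor $E$ over $X$. The two ingredients are $d_S$-convergence of the decreasing nets $P_{\theta+\omega}[\varphi^i]_{\mathcal{I}}$ and $\varphi^i$ to their infima \cite[Proposition~4.8]{DDNLmetric}, and the continuity of divisorial Lelong numbers under $d_S$-convergence \cite[Theorem~0.10]{Xia22}. That computation handles every $\omega$ uniformly and does not single out $\omega=0$ as a base case.

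Your argument instead reduces the general $\omega$ to the $\omega=0$ case via monotonicity and the $\mathcal{I}$-equivalence $\tilde\varphi^i\sim_{\mathcal{I}}\varphi^i$. The reduction itself is clean and logically valid, and your monotonicity and positive-mass preliminaries are correct. But everything rides on your quoted structural result: that for a decreasing net of $\mathcal{I}$-model potentials with positive-mass infimum one has both that the infimum is $\mathcal{I}$-model \emph{and} that $\mathcal{I}(k\inf_i\psi^i)=\bigcap_i\mathcal{I}(k\psi^i)$. The first part is indeed implicitly used by the paper (it asserts without comment that $\inf_iP_{\theta+\omega}[\varphi^i]_{\mathcal{I}}$ is $\mathcal{I}$-model). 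The second part — the intersection formula for multiplier ideals along a decreasing net — is a strictly stronger assertion than what the paper needs, and it is not clear that it appears in \cite{DX22,Xia22} in the form you cite. For a generic decreasing net the reverse inclusion $\bigcap_i\mathcal{I}(k\psi^i)\subseteq\mathcal{I}(k\inf_i\psi^i)$ is false; establishing it under the $\mathcal{I}$-model and positive-mass hypotheses essentially requires the same Lelong-number convergence plus the valuative description of multiplier ideals of $\mathcal{I}$-model potentials, i.e.\ the hard work you are trying to outsource. So your proof is not circular, but it is not self-supporting either: the citation needs to be replaced by (or traced back to) the $d_S$-metric and Lelong-number argument, after which you have essentially reconstructed the paper's proof specialized to $\omega=0$ — at which point the direct Lelong-number verification for general $\omega$ is no longer any harder and the reduction step becomes unnecessary. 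In short: the structure is fine, but the load-bearing lemma is at best implicit in the references and should be proved rather than cited.
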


The proof relies on the $d_S$-pseudometric introduced in \cite{DDNLmetric}.

\begin{proof}

We may assume that $X$ is smooth. In fact, we can always take a projection resolution of singularities $\pi:Y\rightarrow X$, then as explained in \cite[Section~3.5]{Xia19}, we have a canonical bijection $\PSH(X,\theta)\cong \PSH(Y,\pi^*\theta)$. Moreover, under this bijection, the pluripotential-theoretic operations on both spaces correspond to each other. So it suffices to prove \eqref{eq:pinfI} with $Y$, $\pi^*\theta$, $\pi^*\omega$ and $\pi^*\varphi^i$ in place of $X$, $\theta$, $\omega$ and $\varphi^i$. This reduces the problem to the case where $X$ is smooth. In the sequel, we will omit this kind of arguments.

Observe that both sides of \eqref{eq:pinfI} are $\mathcal{I}$-model in $\PSH(X,\theta+\omega)$, so it suffices to show that they have the same generic Lelong numbers. Fix a prime divisor $E$ over $X$.
As shown in \cite[Proposition~4.8]{DDNLmetric}\footnote{The result is stated for a sequence, but the proof works for a net as well.}, $P_{\theta+\omega}[\varphi^i]_{\mathcal{I}}\xrightarrow{d_S}\inf_{i\in I}P_{\theta+\omega}[\varphi^i]_{\mathcal{I}}$. Hence, by \cite[Theorem~0.10]{Xia22},
\[
\lim_{i\in I}\nu(\varphi^i,E)=\nu\left(\inf_{i\in I}P_{\theta+\omega}[\varphi^i]_{\mathcal{I}},E\right).
\]
By \cite[Proposition~4.8]{DDNLmetric} again, $\varphi^i\xrightarrow{d_S}\inf_{j\in I}\varphi^j$. It follows from \cite[Theorem~0.10]{Xia22} that
\[
\lim_{i\in I}\nu(\varphi^i,E)=\nu\left(\inf_{i\in I}\varphi^i,E\right).
\]
We conclude \eqref{eq:pinfI}.
\end{proof}

\begin{lemma}\label{lma:supIbon}
Suppose that $\varphi^i\in \PSH(X,\theta)$ ($i\in I$) is a non-empty family of $\mathcal{I}$-good potentials, uniformly bounded from above. Then $\sups_{i\in I} \varphi^i$ is also $\mathcal{I}$-good.
\end{lemma}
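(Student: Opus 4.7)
The plan is to reduce to $X$ smooth exactly as at the start of the proof of \cref{prop:decnetPproj}. Write $\varphi\coloneqq \sups_{i\in I}\varphi^i$. Positive mass is immediate: since $\varphi\geq \varphi^{i_0}$ for any fixed $i_0\in I$, Witt Nyström monotonicity of non-pluripolar masses yields $\int_X\theta_\varphi^n\geq \int_X\theta_{\varphi^{i_0}}^n>0$. Given positive mass, the $\mathcal{I}$-good condition $P_\theta[\varphi]=P_\theta[\varphi]_\mathcal{I}$ is equivalent to the mass equality $\int_X\theta_\varphi^n=\int_X\theta_{P_\theta[\varphi]_\mathcal{I}}^n$, because both envelopes are model, $P_\theta[\varphi]\leq P_\theta[\varphi]_\mathcal{I}$, and two model potentials one dominating the other with equal non-pluripolar mass must coincide. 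The task thus becomes verifying this mass equality for $\varphi$.

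I would then reduce to the case of an increasing net by the standard device: let $F$ range over nonempty finite subsets of $I$ and set $\psi^F\coloneqq \max_{i\in F}\varphi^i$; the net $(\psi^F)_F$ is increasing with $\sups_F \psi^F=\varphi$. Granted that the finite maximum of $\mathcal{I}$-good potentials is $\mathcal{I}$-good---this is the main obstacle addressed below---each $\psi^F$ is $\mathcal{I}$-good. Then \cite[Proposition~4.8]{DDNLmetric} together with the $d_S$-continuity of non-pluripolar masses gives $\int_X\theta_{\psi^F}^n\to \int_X\theta_\varphi^n$. The increasing net $(P_\theta[\psi^F]_\mathcal{I})_F$ analogously $d_S$-converges to its $\sups$-limit $\eta$, so $\int_X\theta_{P_\theta[\psi^F]_\mathcal{I}}^n\to \int_X\theta_\eta^n$; the $\mathcal{I}$-goodness of each $\psi^F$ equates these two sides, hence $\int_X\theta_\varphi^n=\int_X\theta_\eta^n$. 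Monotonicity gives $\eta\leq P_\theta[\varphi]_\mathcal{I}$, and by \cite[Theorem~0.10]{Xia22} the generic Lelong numbers of $\eta$ and $\varphi$ along any prime divisor over $X$ coincide, hence equal those of $P_\theta[\varphi]_\mathcal{I}$. Uniqueness of an $\mathcal{I}$-model potential with prescribed generic Lelong numbers then forces $\eta=P_\theta[\varphi]_\mathcal{I}$, which yields the desired mass equality.

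The main obstacle is the finite maximum case: showing that if $\varphi_1,\varphi_2$ are $\mathcal{I}$-good, then $\max(\varphi_1,\varphi_2)$ is $\mathcal{I}$-good. Using $\varphi_j\sim_P P_\theta[\varphi_j]_\mathcal{I}$, I would reduce to the case where both $\varphi_j$ are $\mathcal{I}$-model; the task then becomes a careful analysis of the interaction between the non-pluripolar mass and the $\mathcal{I}$-model envelope of a regularized maximum of two model potentials. Here one expects to combine the characterization of $\mathcal{I}$-models by their generic Lelong numbers (so that $P_\theta[\max(\psi_1,\psi_2)]_\mathcal{I}$ is pinned down by $\min(\nu(\psi_1,\cdot),\nu(\psi_2,\cdot))$) with an explicit mass estimate in order to match $\int_X\theta_{\max(\psi_1,\psi_2)}^n$ with $\int_X\theta_{P_\theta[\max(\psi_1,\psi_2)]_\mathcal{I}}^n$. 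This is the step I would expect to require the most technical work.
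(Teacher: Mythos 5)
Your proposal follows the same overall decomposition as the paper's proof: reduce to $X$ smooth, then handle a general uniformly bounded family by (i) showing the finite maximum of $\mathcal{I}$-good potentials is $\mathcal{I}$-good and (ii) passing to an increasing limit. The paper dispatches (ii) by Choquet's lemma plus \cite[Theorem~4.6]{Xia21}, while your $d_S$-continuity argument is a plausible alternative — though at the step where you invoke ``uniqueness of an $\mathcal{I}$-model potential with prescribed generic Lelong numbers'' to conclude $\eta = P_\theta[\varphi]_\mathcal{I}$, you would also need to justify that $\eta = \sups_F P_\theta[\psi^F]_\mathcal{I}$ is itself $\mathcal{I}$-model, which is not automatic from what precedes it (it is essentially the increasing-sequence case again, applied to the $\mathcal{I}$-model potentials $P_\theta[\psi^F]_\mathcal{I}$).

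The genuine gap, however, is exactly where you flag it: the finite (two-element) maximum case. The paper does not prove this either — it cites \cite[Section~6.2]{SGPT} — so you are correct that this is the crux. But your treatment of it is not a proof: ``one expects to combine the characterization of $\mathcal{I}$-models by their generic Lelong numbers ... with an explicit mass estimate'' is a guess at the shape of an argument, not an argument. The difficulty is concrete: given $\mathcal{I}$-model potentials $\psi_1,\psi_2$ of positive mass, one must show the inequality $\int_X\theta_{\max(\psi_1,\psi_2)}^n \geq \int_X\theta_{P_\theta[\max(\psi_1,\psi_2)]_\mathcal{I}}^n$, and there is no soft monotonicity reason for this — the $\mathcal{I}$-envelope of a maximum can in principle be singular along a much smaller set than either $\psi_j$, so the mass could jump. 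Whatever estimate controls this has to engage the structure of multiplier ideals of maxima in a nontrivial way. Without that, the lemma is not established.
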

\begin{proof}
We may assume that $X$ is smooth.
Assume that this result is known when $I$ is finite.
By Choquet's lemma, we therefore assume that $I=\mathbb{Z}_{>0}$ and $\varphi^i$ is an increasing sequence. In this case, the result follows from \cite[Theorem~4.6]{Xia21}.

It remains to handle the case where $I$ has two elements. The argument can be found in \cite[Section~6.2]{SGPT}.
\end{proof}

\begin{lemma}\label{lma:Imodeldeclelong}Let $\omega$ be a semipositive closed real $(1,1)$-form on $X$.
Let $\varphi^j\in \PSH(X,\theta+\epsilon_i\omega)$ be a decreasing sequence of model potentials, where $\epsilon_j$ is a decreasing sequence converging to $0$. Define $\varphi\coloneqq \inf_{j}\varphi^j$. 
\begin{enumerate}
    \item $\int_X \theta_\varphi^n = \lim_{j\to\infty} \int_X (\theta+\epsilon_j \omega+\ddc\varphi^j)^n.$
    \item Assume that $\varphi$ has positive mass. Then for any prime divisor $E$ over $X$,
    \[
    \lim_{j\to\infty}\nu(\varphi^j,E)=\nu(\varphi,E).
    \]
\end{enumerate}

\end{lemma}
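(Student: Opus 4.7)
As in the proof of \cref{prop:decnetPproj}, the first step is to reduce to the case where $X$ is smooth by pulling back along a projective resolution. For part~(1), I plan to fix $\epsilon\geq\epsilon_1$ and set $\beta:=\theta+\epsilon\omega$. Since $\alpha_j:=\theta+\epsilon_j\omega\leq\beta$, each $\varphi^j$ also lies in $\PSH(X,\beta)$, so $(\varphi^j)_j$ becomes a decreasing sequence in this fixed class with limit $\varphi$. The standard continuity of non-pluripolar Monge--Amp\`ere mass along decreasing sequences in a fixed class yields $\int_X(\beta+\ddc\varphi^j)^n\to\int_X(\beta+\ddc\varphi)^n$. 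Since $\omega$ is smooth, the multilinearity of non-pluripolar products produces the binomial expansion
\[
(\beta+\ddc\varphi^j)^n=\sum_{k=0}^n\binom{n}{k}(\epsilon-\epsilon_j)^k\,\omega^k\wedge(\alpha_j+\ddc\varphi^j)^{n-k}
\]
and an analogous formula for $\varphi$ in the class $\theta$. Setting $f_k(j):=\int_X\omega^k\wedge(\alpha_j+\ddc\varphi^j)^{n-k}$, each $f_k(j)$ is uniformly bounded by a cohomological constant, and a subsequence extraction combined with the polynomial identities in $\epsilon$ obtained by varying $\epsilon$ forces $f_k(j)\to f_k^\infty:=\int_X\omega^k\wedge\theta_\varphi^{n-k}$ for every $k$; the case $k=0$ is precisely part~(1).

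For part~(2), I would keep $\beta:=\theta+\epsilon_1\omega$ and replace each $\varphi^j$ by its $\mathcal{I}$-model envelope $\tilde\varphi^j:=P_\beta[\varphi^j]_{\mathcal{I}}$ in the fixed class $\beta$. Monotonicity of $P_\beta[\cdot]_{\mathcal{I}}$ makes $(\tilde\varphi^j)_j$ a decreasing net of $\mathcal{I}$-model potentials, and \cref{prop:decnetPproj} applied with the zero form shows that $\tilde\varphi:=\inf_j\tilde\varphi^j$ is again $\mathcal{I}$-model in $\beta$ (the required positive mass follows from $\tilde\varphi\geq\varphi$). Since $\tilde\varphi^j\sim_{\mathcal{I}}\varphi^j$ and $\mathcal{I}$-equivalent potentials have identical generic Lelong numbers, via Demailly's asymptotic expression for $\nu$ in terms of the sequence of multiplier ideals, we obtain $\nu(\tilde\varphi^j,E)=\nu(\varphi^j,E)$ for every prime divisor $E$. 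An application of \cite[Proposition~4.8]{DDNLmetric} then gives $\tilde\varphi^j\xrightarrow{d_S}\tilde\varphi$, and \cite[Theorem~0.10]{Xia22} produces
\[
\lim_{j\to\infty}\nu(\varphi^j,E)=\nu(\tilde\varphi,E).
\]

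The hard part, as I see it, is to identify $\nu(\tilde\varphi,E)$ with $\nu(\varphi,E)$. The inequality $\tilde\varphi\geq\varphi$ gives only one direction, while the minimality of $P_\beta[\varphi]_{\mathcal{I}}$ among $\mathcal{I}$-model potentials dominating $\varphi$ yields $\tilde\varphi\geq P_\beta[\varphi]_{\mathcal{I}}$, which is the wrong direction for a direct proof that $\tilde\varphi\sim_{\mathcal{I}}\varphi$; the strong openness phenomenon means that $\bigcap_j\mathcal{I}(k\varphi^j)$ can be strictly larger than $\mathcal{I}(k\varphi)$, so the multiplier ideals do not by themselves suffice. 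My plan is to use part~(1) to establish a non-pluripolar mass identity between $\tilde\varphi$ and $P_\beta[\varphi]_{\mathcal{I}}$ in the class $\beta$: two $\mathcal{I}$-model potentials that are comparable in one direction and carry the same non-pluripolar mass must coincide, which would force $\tilde\varphi=P_\beta[\varphi]_{\mathcal{I}}$ and hence deliver the desired equality of generic Lelong numbers.
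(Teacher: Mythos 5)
The paper does not prove this lemma directly; it reduces to the smooth case and cites \cite[Proposition~2.1]{Xia23}. Your self-contained approach is therefore a genuinely different route, and the binomial-expansion/polynomial-interpolation idea in part~(1) is a clean way to transfer mass continuity from the fixed class $\beta_\epsilon=\theta+\epsilon\omega$ to the moving classes. Two caveats: first, the ``standard continuity of non-pluripolar Monge--Amp\`ere mass along decreasing sequences in a fixed class'' is not a soft fact --- it is essentially the content of \cite[Theorem~2.3]{DDNL18mono} (or the $d_S$-theory of \cite{DDNLmetric}), and in the generality you invoke it, it requires the limit to carry positive mass in the class you are working in; for a decreasing sequence of \emph{non-model} potentials with a zero-mass limit it simply fails (e.g.\ $\max(\log|z|,-j)\searrow\log|z|$ on $\mathbb{P}^1$). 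Part~(1) of the lemma is stated without any positive-mass hypothesis, so your argument does not yet cover the degenerate case; you would need to exploit the model property of the $\varphi^j$ (or the positivity of $\int_X\beta_{\varphi}^n$ for some $\epsilon$) to close this.

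For part~(2), the detour through $\tilde\varphi^j=P_\beta[\varphi^j]_{\mathcal I}$ is what creates the gap you then have to struggle with. Since $\alpha_j\leq\beta$, each $\varphi^j$ already lies in $\PSH(X,\beta)$, and $\varphi^j\searrow\varphi$ is a decreasing sequence in this \emph{fixed} class whose limit has positive mass (by monotonicity, $\int_X\beta_\varphi^n\geq\int_X\theta_\varphi^n>0$). One can therefore apply \cite[Proposition~4.8]{DDNLmetric} directly to $(\varphi^j)_j$ to get $\varphi^j\xrightarrow{d_S}\varphi$, and then \cite[Theorem~0.10]{Xia22} immediately gives $\nu(\varphi^j,E)\to\nu(\varphi,E)$ with no further identification needed --- this is exactly the pattern the paper already uses inside the proof of \cref{prop:decnetPproj}, where neither ingredient requires the sequence to be $\mathcal I$-model. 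Introducing $\tilde\varphi^j$ only replaces the target $\varphi$ by $\tilde\varphi=\inf_j\tilde\varphi^j$, and, as you correctly observe, strong openness prevents a direct identification $\tilde\varphi=P_\beta[\varphi]_{\mathcal I}$. Your proposed repair via masses would in fact work, but it quietly relies on the non-trivial fact that $P_\beta[\cdot]_{\mathcal I}$ preserves non-pluripolar mass (needed twice, to pass from $\tilde\varphi^j$ to $\varphi^j$ and from $P_\beta[\varphi]_{\mathcal I}$ back to $\varphi$), which you never state. The direct argument avoids all of this.
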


\begin{proof} 
We may assume that $X$ is smooth. 

In this case, the result is proved in \cite[Proposition~2.1]{Xia23}.
\end{proof}

\subsection{The trace operator}\label{subsec:trace}
The results in this section are simple extensions of the results established in \cite{Xia23}. The proofs are almost identical, so we omit the details.

Assume that $X$ is smooth.
Let $E$ be an irreducible reduced closed subspace of $X$ of dimension $m$. Let $\tilde{E}\rightarrow E$ denote the normalization of $E$. It is well-known that $\tilde{E}$ is a normal K\"ahler space, see \cite[Proposition~3.5]{GK20}.

Consider $\varphi\in \PSH(X,\theta)$ with generic Lelong number $\nu(\varphi,E)=0$. The trace operator gives a canonical and non-trivial way to restrict $\varphi$ to $E$, different from the naive restriction $\varphi|_E$.

The problem with the naive restriction is that even under the assumption $\nu(\varphi,E)=0$ it can happen that $\varphi|_E\equiv -\infty$. Moreover, the naive restriction is usually poorly behaved with respect to sequences of quasi-psh functions. The trace operator is a novel way of solving these issues.

Let $\varphi\in \PSH(X,\theta)$ such that $\nu(\varphi,E)=0$. We will define $\Tr_{E}(\varphi)$ as a qpsh function on the normalization $\tilde{E}$ of $E$, well-defined up to $\mathcal{I}$-equivalence.
The definition is given in \cite{Xia23}, which we recall now. 
\begin{definition}
    Suppose that $\varphi$ has analytic singularities, then we let $\Tr_{E}(\varphi)=\varphi|_{\tilde{E}}$. In general, fix a K\"ahler form $\omega$ on $X$ and take a quasi-equisingular approximation $(\varphi_j)_j$ of $\varphi$ in $\PSH(X,\theta+\omega)$. We define $\Tr_E(\varphi)$ as a $d_S$-limit of $\varphi_j|_{\tilde{E}}$.
\end{definition}
Note that $\Tr_E(\varphi)$ is not identical to $\Tr_E^{\theta}(\varphi)$ in \cite{Xia23}, but they are $\mathcal{I}$-equivalent whenever the latter has positive volume. Hence we can freely apply the results in \cite{Xia23}.

The following result is proved in \cite[Section~3]{Xia23}.
\begin{proposition}\label{prop:tracedecseq}
    Given $\varphi\in \PSH(X,\theta)$ with $\nu(\varphi,E)=0$, its trace operator $\Tr_E(\varphi)$ as a quasi-psh function on $\tilde{E}$ is well-defined up to $\mathcal{I}$-equivalence. Moreover, $\Tr_E(\varphi)$ depends only on the $\mathcal{I}$-equivalence class of $\varphi$.

    Moreover, consider a decreasing sequence of $\mathcal{I}$-good potentials $\varphi_j\in \PSH(X,\theta)$ with pointwise limit $\varphi$ satisfying $\nu(\varphi,E)=0$. Suppose that $\varphi_j\xrightarrow{d_S}\varphi$, then
    we have
    \[
    \Tr_E(\varphi_j)\xrightarrow{d_S}\Tr_E(\varphi).
    \]
\end{proposition}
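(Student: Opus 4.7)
The strategy is to transport the arguments of \cite[Section~3]{Xia23} to the present setting, noting that our $\Tr_E(\varphi)$ agrees with $\Tr_E^{\theta}(\varphi)$ of \cite{Xia23} up to $\mathcal{I}$-equivalence whenever the latter has positive volume. The existing arguments therefore cover the non-degenerate regime, and what remains is to check that the construction continues to make sense, up to $\mathcal{I}$-equivalence, when the naive trace could have vanishing mass.

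First I would establish well-definedness. Fix a K\"ahler form $\omega$ and take a quasi-equisingular approximation $(\varphi_j)_j$ of $\varphi$ in $\PSH(X,\theta+\omega)$. Each $\varphi_j$ has analytic singularities and $\nu(\varphi_j,E)\to 0$, so the naive restrictions $\varphi_j|_{\tilde{E}}$ are bona fide quasi-psh functions on $\tilde{E}$. To show that $(\varphi_j|_{\tilde{E}})_j$ is $d_S$-Cauchy in $\PSH(\tilde{E},(\theta+\omega)|_{\tilde{E}})$, I would test convergence through mass and generic Lelong numbers, applying \cite[Theorem~0.10]{Xia22} on $\tilde{E}$ together with \cref{lma:Imodeldeclelong} on a suitable blow-up: a generic Lelong number along a prime divisor $F$ over $\tilde{E}$ can be read off from $\varphi_j$ on $X$ after lifting $F$ to a divisor over the blow-up of $X$ along $E$. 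The resulting $d_S$-limit depends only on $\varphi$ up to $\mathcal{I}$-equivalence, by a standard interleaving argument with any third dominating quasi-equisingular approximation. The fact that $\Tr_E$ sees only the $\mathcal{I}$-equivalence class of $\varphi$ then follows because two $\mathcal{I}$-equivalent potentials on $X$ share a common quasi-equisingular approximation up to $\mathcal{I}$-equivalence.

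For the final assertion, I would use a diagonal extraction. Choose quasi-equisingular approximations $(\varphi_{j,k})_k$ of each $\varphi_j$ in $\PSH(X,\theta+\omega)$. Using $\varphi_j \xrightarrow{d_S}\varphi$ together with \cref{prop:decnetPproj} applied to the induced $\mathcal{I}$-model envelopes, I would extract indices $k(j)$ so that $(\varphi_{j,k(j)})_j$ itself forms a quasi-equisingular approximation of $\varphi$. Restricting to $\tilde{E}$ and invoking \cite[Proposition~4.8]{DDNLmetric} both on $X$ and on $\tilde{E}$ would then give $\Tr_E(\varphi_j)\xrightarrow{d_S}\Tr_E(\varphi)$. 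The main obstacle I foresee is controlling $d_S$-convergence on $\tilde{E}$ in the degenerate regime where the traces have small or vanishing mass: this is precisely the reason \cite{Xia23} works with $\Tr_E^{\theta}$, and it is exactly here that passing to $\mathcal{I}$-equivalence classes absorbs the degeneracy and makes the extension to the present statement work.
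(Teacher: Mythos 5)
The paper's own argument for this proposition is just the citation to \cite[Section~3]{Xia23}, preceded by the remark that $\Tr_E(\varphi)$ agrees with $\Tr_E^{\theta}(\varphi)$ of \cite{Xia23} up to $\mathcal{I}$-equivalence whenever the latter has positive volume; you correctly identify this as the intended route, and the opening of your proposal matches it.

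However, your reconstruction of the argument has a genuine gap at the step you use to establish well-definedness. You claim that a generic Lelong number of $\varphi_j|_{\tilde{E}}$ along a prime divisor $F$ over $\tilde{E}$ can be ``read off from $\varphi_j$ on $X$ after lifting $F$ to a divisor over the blow-up of $X$ along $E$.'' This is false even for analytic singularities. Take (locally) $X=\mathbb{C}^2$, $E=\{y=0\}$, $\varphi=\log\left(|x|^4+|y|^2\right)$, and let $F=\{0\}\subseteq E$. Then $\varphi|_E=2\log|x|^2$, so $\nu(\varphi|_E,F)=2$; but $\nu(\varphi,\{0\})=1$, and the Lelong number of $\pi^*\varphi$ along the exceptional divisor of the blow-up of $X$ at the origin is also $1$, not $2$. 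Divisorial valuations on $E$ generally correspond to quasi-monomial rather than divisorial valuations on $X$, and even so the correspondence of generic Lelong numbers is $\varphi$-dependent and does not come from any fixed lift. This mismatch is precisely what makes the trace operator a non-trivial operation distinct from naive restriction, so it cannot be used as the mechanism to prove $d_S$-Cauchyness of $(\varphi_j|_{\tilde{E}})_j$. The actual argument in \cite{Xia23} controls the restricted sequence directly using the structure of quasi-equisingular approximations (convergence of multiplier ideals together with restriction/Ohsawa--Takegoshi-type estimates) rather than by transporting Lelong numbers across the embedding. Your diagonal-extraction step for the decreasing-sequence part has a related issue: it is not automatic that the diagonal $(\varphi_{j,k(j)})_j$ is a quasi-equisingular approximation of $\varphi$, and this needs a separate justification before \cite[Proposition~4.8]{DDNLmetric} can be applied on $\tilde{E}$.
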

Here $\varphi_j\xrightarrow{d_S}\varphi$ means the same holds after pulling-back to an arbitrary projective resolution of singularities.

In general it is desirable to have a definition of $\Tr_E(\varphi)$ when $X$ is just unibranch. 
Unfortunately, due to the lack of Demailly regularization on $X$, the author cannot define the trace operator when $E$ is contained in the singular locus of $X$.
Subsequently, in \cref{subsec:traceop}, we have to assume that our K\"ahler space is smooth.

\section{The theories of non-Archimedean metrics}
In this section, we briefly recall two different theories of non-Archimedean metrics.

\subsection{Boucksom--Jonsson theory}\label{subsec:NAformal}
In this section, we recall the basic concepts in Boucksom--Jonsson's theory as in \cite{BJ18b}.

Let $X$ be an irreducible reduced variety over $\mathbb{C}$ of dimension $n$. Let $X^{\An}$ denote the Berkovich analytification $X^{\An}$ of $X$ with respect to the trivial valuation on $\mathbb{C}$. 

The set of real valuations on $\mathbb{C}(X)$ trivial on $\mathbb{C}$ is denoted by $X^{\val}$. The center of a valuation $v$ is the scheme-theoretic point $c=c(v)$ of $X$ such that $v\geq 0$ on $\mathcal{O}_{X,c}$ and $v>0$ on the maximal ideal $\mathfrak{m}_{X,c}$ of $\mathcal{O}_{X,c}$. The center is unique if exists. It exists if $X$ is proper.

In the remaining of this section, we assume that $X$ is projective.

As a set, $X^{\An}$ is the set of semi-valuations on $X$, in other words, real-valued valuations $v$
on irreducible reduced subvarieties $Y$ in $X$ that is trivial on $\mathbb{C}$. We call $Y$ the \emph{support} of the semi-valuation $v$.
In other words, 
\[
X^{\An}=\coprod_{Y} Y^{\val}.
\]
We will write $v_{\triv}\in X^{\An}$ for the trivial valuation on $X$: $v_{\triv}(f)=0$ for any $f\in \mathbb{C}(X)^{\times}$.
See \cite{Berk93} for more details.

We will be most interested in divisorial valuations. Recall that a divisorial valuation on $X$ is a valuation of the form $c\ord_E$, where $c\in \mathbb{Q}_{>0}$ and $E$ is a prime divisor over $X$. The set of divisorial valuations on $X$ is denoted by $X^{\Div}$. This notation is compatible with that in \cref{subsec:div}.

Given any coherent ideal $\mathfrak{a}$ on $X$ and any $v\in X^{\An}$, we define 
\begin{equation}\label{eq:va}
v(\mathfrak{a}):=\min\{v(f):f\in \mathfrak{a}_{c(v)}\}\in [0,\infty],
\end{equation}
where $c(v)$ is the center of the valuation $v$ on $X$. 

Given any valuation $v$ on $X$, the Gauss extension of $v$ is a valuation $\sigma(v)$ on $X\times \mathbb{A}^1$:
\[
\sigma(v)\left(\sum_i f_i t^i\right):=\min_i (v(f_i)+i).
\]
Here $t$ is the standard coordinate on $\mathbb{A}^1=\Spec \mathbb{C}[t]$. The key property is that when $v$ is a divisorial valuation, then so it $\sigma(v)$. See \cite[Lemma~4.2]{BHJ17}.

\subsubsection*{Non-Archimedean plurisubharmonic functions.}

Let $X$ be an irreducible complex projective variety of dimension $n$ and $L$ be a holomorphic pseudoeffective $\mathbb{Q}$-line bundle on $X$. Through the GAGA morphism $X^{\An}\rightarrow X$ of ringed spaces, $L$ can be pulled-back to an analytic line bundle $L^{\An}$ on $X$. 

Following \cite[Definition~2.18]{BJ18b}, we define $\mathcal{H}^{\gf}_{\mathbb{Q}}(L)$, the set of \emph{(rational)  generically finite Fubini--Study functions}  $\phi\colon X^{\An}\rightarrow [-\infty,\infty)$, that are of the following form:
    \begin{equation}
    \phi=\frac{1}{m}\max_j \{\log|s_j|+\lambda_j\}.
    \end{equation}
Here $m\in \mathbb{Z}_{>0}$ is an integer such that $L^{\otimes m}$ is a line bundle, the $s_j$'s are a finite collection of non-vanishing sections in $H^0(X,L^{\otimes m})$, and $\lambda_j\in \mathbb{Q}$. We followed the convention of Boucksom--Jonsson by writing $\log |s_j|(v)=-v(s_j)$.

Now we come to the main definition of this paragraph:
\begin{definition}[{\cite[Definition~4.1]{BJ18b}}]\label{def:BJpshmetric}
A psh metric on $L^{\An}$ is a function $\phi:X^{\An}\rightarrow [-\infty,\infty)$ that  is not identically $-\infty$, and  is the pointwise limit of a decreasing net $(\phi_i)_{i\in I}$, where $\phi_i\in \mathcal{H}^{\gf}_{\mathbb{Q}}(L_i^{\An})$ for some $\mathbb{Q}$-line bundles $L_i$ on $X$ satisfying $c_1(L_i)\to c_1(L)$ in $\NS^1(X)_{\mathbb{R}}$.

\end{definition}

\subsection{The transcendental theory}
Let $X$ be a reduced irreducible unibranch compact K\"ahler space of dimension $n$ and $\theta$ be a closed real $(1,1)$-form on $X$. Assume that $\PSH(X,\theta)$ is non-empty.

We briefly recall the transcendental theory of non-Archimedean metrics introduced in \cite{DXZ23}.

\begin{definition}\label{def:pshnageq0}
The space $\PSH^{\NA'}(X,\theta)$ is defined as the set consisting of maps $\Gamma \colon (-\infty,\Gamma_{\max})\rightarrow \PSH(X,\theta)$ for some $\Gamma_{\max}\in \mathbb{R}$ satisfying the following conditions:
\begin{enumerate}
    \item for each $\tau\in (-\infty,\Gamma_{\max})$, $\Gamma_{\tau}$ is $\mathcal{I}$-model;
    \item the map $(-\infty,\Gamma_{\max}) \ni \tau\mapsto \Gamma_{\tau}$ is decreasing and concave.
\end{enumerate}
We write 
\[
\Gamma_{-\infty}\coloneqq \sups_{\tau\in (-\infty,\Gamma_{\max})} \Gamma_{\tau}.
\]
We define $\PSH^{\NA}(X,\theta)_{>0}$ as the subset of  $\PSH^{\NA'}(X,\theta)$ consisting of $\Gamma$
satisfying furthermore
\begin{enumerate}
    \item[(3)] $\int_X \theta_{\Gamma_{-\infty}}^n>0$.
\end{enumerate}
\end{definition}
The curves $\Gamma\in \PSH^{\NA'}(X,\theta)$ are sometimes known as \emph{test curves}. Here we choose the Greek letter $\Gamma$ simply because it resembles the reflection of $\ell$, the notation usually used to represent a geodesic ray, in view of their duality.

The object that we are eventually interested in is the space $\PSH^{\NA}(X,\theta)$ defined in \cref{def:PSHNA1}. The set $\PSH^{\NA'}(X,\theta)$ only plays an auxiliary role. In the notation $\PSH^{\NA}(X,\theta)_{>0}$, we did not put a prime in the index. Intuitively we would like to think of $\PSH^{\NA}(X,\theta)_{>0}$ as a subset of $\PSH^{\NA}(X,\theta)$ instead of a subset of $\PSH^{\NA'}(X,\theta)$. See \cref{lma:positivemasspart}.

\begin{remark}\label{rmk:extendtc}
Sometimes it is convenient to extend the domain of definition of $\Gamma\in \PSH^{\NA'}(X,\theta)$ to the whole $\mathbb{R}$ as follows: For $\tau>\Gamma_{\max}$, we simply set $\Gamma_{\tau}=-\infty$ while for $\tau=\Gamma_{\max}$, we let 
\[
\Gamma_{\Gamma_{\max}}=\inf_{\tau<\Gamma_{\max}}\Gamma_{\tau}.
\]
In particular, for each $x\in X$, the function $\mathbb{R}\ni \tau \mapsto \Gamma_{\tau}(x)$ is either constantly $-\infty$, or a proper usc concave function. In particular, it is a closed concave function and the Legendre--Fenchel duality applies.
We will frequently make use of this convention without further explanation.    
\end{remark}

\begin{lemma}\label{lma:testcurvposmass}
    Assume that $\Gamma\in \PSH^{\NA}(X,\theta)_{>0}$, then $\int_X \theta_{\Gamma_{\tau}}^n>0$ for any $\tau\in (-\infty,\Gamma_{\max})$.
\end{lemma}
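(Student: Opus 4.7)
I argue by contradiction. Suppose that $\int_X \theta_{\Gamma_{\tau_0}}^n = 0$ for some $\tau_0 \in (-\infty, \Gamma_{\max})$; I will derive $\int_X \theta_{\Gamma_{-\infty}}^n = 0$, which contradicts $\Gamma \in \PSH^{\NA}(X,\theta)_{>0}$. As in \cref{prop:decnetPproj}, by passing to a projective resolution we may reduce to the case where $X$ is smooth.

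The heart of the argument is to propagate the vanishing of mass from $\tau_0$ down to every $\tau_1 < \tau_0$. Fix such a $\tau_1$ together with any $\tau_2 \in (\tau_0, \Gamma_{\max})$, and set $\alpha := (\tau_2 - \tau_0)/(\tau_2 - \tau_1) \in (0,1)$, so that $\tau_0 = \alpha \tau_1 + (1-\alpha) \tau_2$. Concavity of $\tau \mapsto \Gamma_\tau$ gives
\[
\Gamma_{\tau_0} \geq \chi := \alpha \Gamma_{\tau_1} + (1-\alpha) \Gamma_{\tau_2} \in \PSH(X,\theta),
\]
and monotonicity of the non-pluripolar mass then yields $\int_X \theta_\chi^n \leq \int_X \theta_{\Gamma_{\tau_0}}^n = 0$. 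Writing $\theta_\chi = \alpha \theta_{\Gamma_{\tau_1}} + (1-\alpha) \theta_{\Gamma_{\tau_2}}$ and expanding by multilinearity of the non-pluripolar product gives
\[
\theta_\chi^n = \sum_{k=0}^n \binom{n}{k} \alpha^k (1-\alpha)^{n-k}\, \theta_{\Gamma_{\tau_1}}^k \wedge \theta_{\Gamma_{\tau_2}}^{n-k},
\]
a sum of non-negative measures. The total mass being zero forces each term to vanish; in particular the $k = n$ term yields $\alpha^n \int_X \theta_{\Gamma_{\tau_1}}^n = 0$, hence $\int_X \theta_{\Gamma_{\tau_1}}^n = 0$.

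\textbf{Conclusion.} From the previous step, $\int_X \theta_{\Gamma_{\tau_1}}^n = 0$ for every $\tau_1 \leq \tau_0$. Since $\Gamma$ is decreasing in $\tau$, the family $(\Gamma_\tau)$ is increasing as $\tau \to -\infty$ with regularized supremum $\Gamma_{-\infty}$. Continuity of the non-pluripolar mass along monotone increasing limits (a consequence of monotonicity combined with lower semicontinuity under $L^1$-convergence) then yields $\int_X \theta_{\Gamma_\tau}^n \to \int_X \theta_{\Gamma_{-\infty}}^n$ as $\tau \to -\infty$, forcing $\int_X \theta_{\Gamma_{-\infty}}^n = 0$, the desired contradiction. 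The main subtlety is the multilinear expansion of $\theta_\chi^n$ in the non-pluripolar sense; I would justify it by Bedford--Taylor multilinearity on the common locus $\{\Gamma_{\tau_1} > -\infty\} \cap \{\Gamma_{\tau_2} > -\infty\}$ followed by the trivial extension intrinsic to the definition of non-pluripolar products, but it is worth making explicit.
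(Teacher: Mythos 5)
Your proof is essentially correct and rests on the same two ingredients as the paper's proof — the concavity of $\tau\mapsto\Gamma_\tau$ combined with multilinearity/positivity of the non-pluripolar product, and the convergence of non-pluripolar masses along monotone limits — just arranged as a contrapositive. The paper argues directly: by \cite[Theorem~2.3]{DDNL18mono}, $\int_X\theta_{\Gamma_{-\infty}}^n = \lim_{\tau\to-\infty}\int_X\theta_{\Gamma_\tau}^n$, so one can pick $\tau_0\ll\tau$ with positive mass and then transfer positivity upward to $\Gamma_\tau$ via concavity. You instead propagate the vanishing of mass downward and then invoke the same limit theorem at the end.

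There is, however, one place where your justification is wrong even though the conclusion is right. You claim the identity $\int_X\theta_{\Gamma_\tau}^n\to\int_X\theta_{\Gamma_{-\infty}}^n$ is "a consequence of monotonicity combined with lower semicontinuity under $L^1$-convergence." Monotonicity of masses (Witt Nystr\"om, Darvas--Di~Nezza--Lu) gives only $\int_X\theta_{\Gamma_\tau}^n\leq\int_X\theta_{\Gamma_{-\infty}}^n$, and there is no lower-semicontinuity statement for non-pluripolar masses under $L^1$ convergence that would close the gap — in general, non-pluripolar masses behave discontinuously. The convergence of masses along monotone sequences is a genuine theorem, namely \cite[Theorem~2.3]{DDNL18mono}, which is exactly what the paper cites; you should replace your heuristic with this reference. (You are right to flag the multilinear expansion of $\theta_\chi^n$ as needing justification; the resolution is that the individual non-pluripolar terms $\langle\theta_{\Gamma_{\tau_1}}^k\wedge\theta_{\Gamma_{\tau_2}}^{n-k}\rangle$ put no mass on the pluripolar set $\{\Gamma_{\tau_2}=-\infty\}$, so restricting to the common finiteness locus loses nothing. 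That part of your sketch is sound.)
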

\begin{proof}
    Fix $\tau\in (-\infty,\Gamma_{\max})$, we want to show that 
    \begin{equation}\label{eq:dalethtauposmass}
        \int_X \theta_{\Gamma_{\tau}}^n>0.
    \end{equation}
    We may assume that $X$ is smooth.

    By assumption, $\Gamma_{-\infty}$ has positive mass. By \cite[Theorem~2.3]{DDNL18mono}, we have
    \[
    \int_X \theta_{\Gamma_{-\infty}}^n=\lim_{\tau\to -\infty}\int_X \theta_{\Gamma_{\tau}}^n.
    \]
    In particular, for a sufficiently small $\tau_0<\tau$, we have 
    \[
    \int_X \theta_{\Gamma_{\tau_0}}^n>0.
    \]
    Now take $\tau'\in (\tau, \Gamma_{\max})$ and $t\in (0,1)$ so that
    \[
    \tau=(1-t)\tau'+t\tau_0.
    \]
    From the concavity of $\Gamma$, we find that
    \[
    \Gamma_{\tau}\geq (1-t)\Gamma_{\tau'}+t\Gamma_{\tau_0}.
    \]
    By the monotonicity theorem \cite{WN19},
    \[
    \int_X \theta_{\Gamma_{\tau}}^n\geq \int_X \theta_{(1-t)\Gamma_{\tau'}+t\Gamma_{\tau_0}}^n\geq t^n \int_X \theta_{\Gamma_{\tau_0}}^n>0
    \]
    and \eqref{eq:dalethtauposmass} follows.
\end{proof}

\begin{definition}\label{def:PSHNA1}
    Given any K\"ahler form $\omega$ on $X$, we define transition maps 
    \[
    \begin{aligned}
    & p_{\theta,\theta+\omega}\colon \PSH^{\NA'}(X,\theta)\rightarrow \PSH^{\NA'}(X,\theta+\omega),\\
    & p_{\theta,\theta+\omega}\colon \PSH^{\NA}(X,\theta)_{>0}\rightarrow \PSH^{\NA}(X,\theta+\omega)_{>0}
    \end{aligned}
    \]
    as follows: The image of $\Gamma\colon (-\infty,\Gamma_{\max})\rightarrow \PSH(X,\theta)$ is given by
    \[
    p_{\theta,\theta+\omega}(\Gamma)\colon (-\infty,\Gamma_{\max})\rightarrow \PSH(X,\theta+\omega),\quad \tau\mapsto P_{\theta+\omega}[\Gamma_{\tau}]_{\mathcal{I}}.
    \]
    Note that $\{\PSH^{\NA'}(X,\theta+\omega),p_{\theta,\theta+\omega}\}_{\omega}$ and $\{\PSH^{\NA}(X,\theta+\omega)_{>0},p_{\theta,\theta+\omega}\}_{\omega}$ form projective systems, where the set of K\"ahler forms $\omega$'s is ordered by the reverse of the usual ordering.
    We define
    \begin{equation}\label{eq:PSHNAdef}
    \PSH^{\NA}(X,\theta)\coloneqq \varprojlim_{\omega}\PSH^{\NA'}(X,\theta+\omega),
    \end{equation}
    where the projective limit is taken in the category of sets.
\end{definition}
We will  denote the components of an element $\Gamma\in \PSH^{\NA}(X,\theta)$ by $\Gamma^{\theta+\omega}\in \PSH^{\NA'}(X,\theta+\omega)$. Observe that $\Gamma^{\theta+\omega}_{\max}$ is independent of the choice of $\omega$, we will denote this common value by $\Gamma_{\max}$.

\begin{remark}
The extension in \cref{rmk:extendtc} is compatible with transition maps in the following sense: Suppose that $\Gamma\in \PSH^{\NA}(X,\theta)$, then for any K\"ahler forms $\omega$ and $\omega'$ on $X$,
\[
P_{\theta+\omega+\omega'}
\left[\Gamma^{\theta+\omega}_{\Gamma_{\max}}\right]_{\mathcal{I}}=\Gamma^{\theta+\omega+\omega'}_{\Gamma_{\max}}.
\]
\end{remark}

\begin{lemma}
    The natural maps
    \[
    \varprojlim_{\omega}\PSH^{\NA}(X,\theta+\omega)_{>0}\rightarrow \PSH^{\NA}(X,\theta)\rightarrow \varprojlim_{\omega}\PSH^{\NA}(X,\theta+\omega) 
    \]
    are both bijective.
\end{lemma}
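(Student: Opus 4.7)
For the first map $\varprojlim_\omega \PSH^{\NA}(X,\theta+\omega)_{>0}\to \PSH^{\NA}(X,\theta)$ injectivity is immediate since it is the inclusion of subsets. For surjectivity I must show that for every $\Gamma\in \PSH^{\NA}(X,\theta)$ and every K\"ahler $\omega_0$ one has $\int_X (\theta+\omega_0)^n_{\Gamma^{\theta+\omega_0}_{-\infty}}>0$. By the Witt Nystr\"om monotonicity of non-pluripolar mass along pointwise domination \cite{WN19} together with \cref{lma:testcurvposmass}, it suffices to produce a single $\tau<\Gamma_{\max}$ for which $\int_X (\theta+\omega_0)^n_{\Gamma^{\theta+\omega_0}_\tau}>0$.

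The idea is to use the smaller K\"ahler form $\tfrac{\omega_0}{2}$ as a buffer. By the very definition of $\PSH^{\NA}(X,\theta)$, the component $\Gamma^{\theta+\omega_0/2}_\tau\in \PSH(X,\theta+\tfrac{\omega_0}{2})$ is part of the data of $\Gamma$, and so
\[
(\theta+\omega_0)+\ddc \Gamma^{\theta+\omega_0/2}_\tau = \Bigl((\theta+\tfrac{\omega_0}{2})+\ddc \Gamma^{\theta+\omega_0/2}_\tau\Bigr) + \tfrac{\omega_0}{2} \geq \tfrac{\omega_0}{2}
\]
as positive currents. Writing the left-hand side as $T=\tfrac{\omega_0}{2}+S$ with $S\geq 0$ and expanding binomially, the pure-smooth term $(\tfrac{\omega_0}{2})^n$ is unaffected by the truncation procedure defining the non-pluripolar Monge--Amp\`ere (plurifine locality), hence $\langle T^n\rangle \geq (\tfrac{\omega_0}{2})^n$ as positive measures and therefore $\int_X (\theta+\omega_0)^n_{\Gamma^{\theta+\omega_0/2}_\tau}\geq \int_X \bigl(\tfrac{\omega_0}{2}\bigr)^n>0$. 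The transition relation $\Gamma^{\theta+\omega_0}_\tau=P_{\theta+\omega_0}[\Gamma^{\theta+\omega_0/2}_\tau]_\mathcal{I}\geq \Gamma^{\theta+\omega_0/2}_\tau$ combined with Witt Nystr\"om monotonicity then upgrades this to $\int_X(\theta+\omega_0)^n_{\Gamma^{\theta+\omega_0}_\tau}>0$, as required.

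For the second map $\PSH^{\NA}(X,\theta)\to \varprojlim_\omega \PSH^{\NA}(X,\theta+\omega)$ the argument is cofinality of a double projective limit. An element of the target is a family $(\Xi^{(\omega)})_\omega$ with $\Xi^{(\omega)}=(\Xi^{(\omega),\omega'})_{\omega'}\in \varprojlim_{\omega'}\PSH^{\NA'}(X,\theta+\omega+\omega')$. Unwinding the outer compatibility (the transition $p_{\theta+\omega,\theta+\omega_1}$ at the $\PSH^{\NA}$-level is induced coordinatewise from the inner transitions) shows that $\Xi^{(\omega),\omega'}$ depends only on the sum $\omega+\omega'$; any two K\"ahler decompositions are bridged via a common lower bound $\omega_2\leq \omega,\omega_1$. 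Setting $\Gamma^{\theta+\omega''}:=\Xi^{(\omega),\omega''-\omega}$ for any valid decomposition then defines a compatible element of $\PSH^{\NA}(X,\theta)$ inverting the natural map; injectivity follows from the same dependence. The only step requiring genuine care in the entire proof is the elementary lower bound $\langle T^n\rangle\geq \omega^n$ for $T\geq \omega$ with $\omega$ smooth semipositive that I used above; this is standard, and everything else is either categorical bookkeeping or a direct application of results recalled in Section~2.
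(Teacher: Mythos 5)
Your proof is correct, but structured differently from the paper's. The paper observes that both maps are injective and then shows the \emph{composite} $\varprojlim_{\omega}\PSH^{\NA}(X,\theta+\omega)_{>0}\to \varprojlim_{\omega}\PSH^{\NA}(X,\theta+\omega)$ is surjective, by exhibiting the explicit preimage $\omega\mapsto\Gamma^{\theta+\omega/2,\theta+\omega}$; this single construction then forces both maps to be bijective. You instead prove surjectivity of each map separately: for the first map you show each component $\Gamma^{\theta+\omega_0}$ automatically has positive mass, via the buffer $\omega_0/2$ together with the lower bound $\langle T^n\rangle\geq (\omega_0/2)^n$ for $T\geq\omega_0/2$ and Witt--Nystr\"om monotonicity; for the second you unwind the double projective limit. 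The content is essentially the same — the positive-mass fact you prove is exactly what hides inside the paper's ``it is easy to verify'' when checking that $\Gamma^{\theta+\omega/2,\theta+\omega}\in\PSH^{\NA}(X,\theta+\omega)_{>0}$ — but the paper's route is more economical, while yours makes that hidden check fully explicit (arguably a virtue). One minor hidden ingredient in your monotonicity step is the normalization $\Gamma^{\theta+\omega_0/2}_\tau\leq 0$, which holds because $\mathcal{I}$-model potentials are nonpositive by construction; worth stating, since without it $P_{\theta+\omega_0}[\cdot]_{\mathcal{I}}\geq\cdot$ would fail.
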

In the sequel, we will be constantly making use of these identifications without further mentioning.
\begin{proof}
    It is clear that both maps are injective, so it suffices to verify that the map
    \[
    \varprojlim_{\omega}\PSH^{\NA}(X,\theta+\omega)_{>0}\rightarrow  \varprojlim_{\omega}\PSH^{\NA}(X,\theta+\omega) 
    \]
    is surjective. Consider an element $\Gamma\in \varprojlim_{\omega}\PSH^{\NA}(X,\theta+\omega)$ with components $\Gamma^{\theta+\omega}\in \PSH^{\NA}(X,\theta+\omega)$. For each $\omega$, $\Gamma^{\theta+\omega}$ has components $\Gamma^{\theta+\omega,\theta+\omega+\omega'}\in \PSH^{\NA'}(X,\theta+\omega+\omega')$. It is easy to verify that $\Gamma$ is the image of the following element in $\varprojlim_{\omega}\PSH^{\NA}(X,\theta+\omega)_{>0}$ with components $\Gamma^{\theta+\omega/2,\theta+\omega}$.
\end{proof}
Also note that the transition maps give natural injections 
\begin{equation}\label{eq:caninjec}
\PSH^{\NA}(X,\theta)\hookrightarrow \PSH^{\NA}(X,\theta+\omega).
\end{equation}
In particular, we can define
\[
\QPSH^{\NA}(X)\coloneqq \varinjlim_{\omega}\PSH^{\NA}(X,\theta+\omega),
\]
where $\omega$ runs over all K\"ahler forms on $X$. This definition is clearly independent of the choice of $\theta$.

\begin{lemma}\label{lma:canonicalident}
    The natural map
    \begin{equation}\label{eq:canonicalcomp1}
    \varprojlim_{\eta}\PSH^{\NA}(X,\theta+\eta)_{>0}\rightarrow \PSH^{\NA}(X,\theta)
    \end{equation}
    is a bijection, where $\eta$ runs over the set of semipositive closed real $(1,1)$-forms on $X$ with positive total mass.
\end{lemma}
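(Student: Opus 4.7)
The plan is to construct an explicit inverse to the natural map, leveraging the preceding lemma. The key geometric observation is that whenever $\eta$ is a semipositive closed real $(1,1)$-form on $X$ with positive total mass and $\omega'$ is any K\"ahler form on $X$, the sum $\eta+\omega'$ is again K\"ahler. Applying the preceding lemma with $\theta+\eta$ in place of $\theta$, one obtains a canonical identification
\[
\PSH^{\NA}(X,\theta+\eta)_{>0}\cong \varprojlim_{\omega'}\PSH^{\NA'}(X,\theta+\eta+\omega'),
\]
where $\omega'$ ranges over K\"ahler forms on $X$.

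Given $\Gamma\in \PSH^{\NA}(X,\theta)=\varprojlim_{\omega}\PSH^{\NA'}(X,\theta+\omega)$, I would define the pre-image $\tilde\Gamma$ by prescribing, for each admissible $\eta$, the component $\tilde\Gamma^{\theta+\eta}\in \PSH^{\NA}(X,\theta+\eta)_{>0}$ through its K\"ahler subcomponents:
\[
\tilde\Gamma^{\theta+\eta,\theta+\eta+\omega'}\coloneqq \Gamma^{\theta+\eta+\omega'},
\]
which makes sense because $\eta+\omega'$ is K\"ahler, hence indexes a component of $\Gamma$. The compatibility of these subcomponents across $\omega'$ (for fixed $\eta$) is immediate from the compatibility of $\Gamma$ across K\"ahler forms applied to the pair $\eta+\omega'_1\leq \eta+\omega'_2$. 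For compatibility of $\tilde\Gamma$ across $\eta$, in the case $\eta_1\leq \eta_2$, the transition map restricted to K\"ahler subcomponents should reduce to the tautological identity
\[
\tilde\Gamma^{\theta+\eta_2,\theta+\eta_2+\omega'}=\Gamma^{\theta+\eta_2+\omega'}=\Gamma^{\theta+\eta_1+(\omega'+\eta_2-\eta_1)}=\tilde\Gamma^{\theta+\eta_1,\theta+\eta_1+(\omega'+\eta_2-\eta_1)},
\]
since $\omega'+(\eta_2-\eta_1)$ is K\"ahler.

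Finally, to verify that the construction is inverse to the natural restriction map: restricting $\tilde\Gamma$ to K\"ahler $\eta=\omega$ yields subcomponents $\Gamma^{\theta+\omega+\omega'}$, which by the preceding lemma recover $\Gamma$ itself; conversely, any compatible family $\{\tilde\Gamma^{\theta+\eta}\}_\eta$ is determined by its K\"ahler subcomponents, which by the $\eta$-compatibility depend only on the K\"ahler sum $\eta+\omega'$, and so are forced by the restriction to K\"ahler levels. The main technical step in this plan will be matching the abstract transition $p_{\theta+\eta_1,\theta+\eta_2}$, defined as $\tau\mapsto P_{\theta+\eta_2}[\,\cdot\,]_{\mathcal{I}}$ on $\PSH^{\NA'}$-level test curves, with the tautological re-indexing described above at the level of K\"ahler subcomponents. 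This reduces to a double-envelope compatibility of the form $P_{\theta+\eta_2+\omega'}[P_{\theta+\eta_2}[\varphi]_{\mathcal{I}}]_{\mathcal{I}}=P_{\theta+\eta_2+\omega'}[\varphi]_{\mathcal{I}}$, which follows from the idempotency and monotonicity of the $\mathcal{I}$-envelope operator; this is the only non-formal ingredient of the proof.
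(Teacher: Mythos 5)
The plan rests on the claimed identification
\[
\PSH^{\NA}(X,\theta+\eta)_{>0}\cong \varprojlim_{\omega'}\PSH^{\NA'}(X,\theta+\eta+\omega'),
\]
which you attribute to the preceding lemma. That is not what it gives: applied with $\theta+\eta$ in place of $\theta$, it yields $\varprojlim_{\omega'}\PSH^{\NA}(X,\theta+\eta+\omega')_{>0}\cong \PSH^{\NA}(X,\theta+\eta)\cong \varprojlim_{\omega'}\PSH^{\NA}(X,\theta+\eta+\omega')$, and the right-hand side of your display is simply the \emph{definition} of $\PSH^{\NA}(X,\theta+\eta)$, not of $\PSH^{\NA}(X,\theta+\eta)_{>0}$. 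An element of $\PSH^{\NA}(X,\theta+\eta)_{>0}$ is an honest test curve $\tau\mapsto\Gamma_\tau\in\PSH(X,\theta+\eta)$ with $\int_X\theta_{\Gamma_{-\infty}}^n>0$, whereas an element of $\PSH^{\NA}(X,\theta+\eta)$ is only a compatible tower of curves at the Kähler levels $\theta+\eta+\omega'$. By \cref{lma:positivemasspart}, the injection $\PSH^{\NA}(X,\theta+\eta)_{>0}\hookrightarrow\PSH^{\NA}(X,\theta+\eta)$ hits exactly the elements of positive non-pluripolar mass, so your identification is, for a fixed $\eta$, essentially the statement you are trying to prove. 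The re-indexing $\tilde\Gamma^{\theta+\eta,\theta+\eta+\omega'}\coloneqq\Gamma^{\theta+\eta+\omega'}$ therefore does not, by itself, produce a member of $\PSH^{\NA}(X,\theta+\eta)_{>0}$.

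This is precisely the non-formal content that the paper's proof supplies: one must actually build the curve at level $\theta+\eta$ by taking $\Gamma^{\theta+\eta}_\tau\coloneqq\inf_{k}\Gamma^{\theta+\eta+k^{-1}\omega}_\tau$, and then invoke \cref{lma:Imodeldeclelong} both to show that $\int_X(\theta+\eta+\ddc\Gamma^{\theta+\eta}_\tau)^n>0$ (so the curve lands in $\PSH^{\NA}(X,\theta+\eta)_{>0}$) and to match the generic Lelong numbers along divisors, which is what establishes compatibility under the transition maps $P_{\theta+\eta'}[\cdot]_{\mathcal{I}}$. The "only non-formal ingredient" you isolate — a double-envelope idempotency — is not the right issue; the obstruction is the positivity of the mass of the decreasing limit, and a purely diagram-chasing argument cannot circumvent it.
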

\begin{proof}
    The injectivity is trivial. In order to prove the surjectivity, let $\Gamma\in \PSH^{\NA}(X,\theta)$. We want to show that $\Gamma$ can be extended to an element in the domain of \eqref{eq:canonicalcomp1}. For this purpose, take a semipositive closed real $(1,1)$-form $\eta$ on $X$ with positive total mass. Fix a K\"ahler form $\omega$ on $X$. We define
    \[
    \Gamma^{\theta+\eta}_{\tau}\coloneqq \inf_{k\in \mathbb{Z}_{>0}}\Gamma^{\theta+\eta+k^{-1}\omega}_{\tau}
    \]
    for any $\tau\in (-\infty,\Gamma_{\max})$. By \cref{lma:Imodeldeclelong}, 
    \[
    \int_X (\theta+\eta+\ddc \Gamma^{\theta+\eta}_{\tau})^n>0
    \]
    and 
    \[
    \nu\left(\Gamma^{\theta+\eta}_{\tau},E\right)=\nu\left(\Gamma^{\theta+\eta+\omega}_{\tau},E\right)
    \]
    for any prime divisor $E$ over $X$ and any $\tau\in (-\infty,\Gamma_{\max})$. It is clear that $\Gamma^{\eta}\in \PSH^{\NA}(X,\theta+\eta)_{>0}$. The $(\Gamma^{\theta+\eta})_{\eta}$ defined in this way is clearly an element in the domain of \eqref{eq:canonicalcomp1} with image $\Gamma$.
\end{proof}
\begin{definition}
    The \emph{non-pluripolar mass} of $\Gamma\in \PSH^{\NA}(X,\theta)$ is defined as the limit
    \[
    \lim_{\omega} \int_X \left(\theta+\omega+\ddc\Gamma^{\theta+\omega}_{-\infty}\right)^n,
    \]
    where $\omega$ runs over the direct set of all K\"ahler forms on $X$. In other words, if we fix a K\"ahler form $\omega$, the limit means
    \[
    \lim_{k\to\infty} \int_X \left(\theta+k^{-1}\omega+\ddc\Gamma^{\theta+k^{-1}\omega}_{-\infty}\right)^n,
    \]
\end{definition}
Since the net is decreasing, the limit exists.

\begin{lemma}\label{lma:positivemasspart}
The image of the canonical injection
\[
\PSH^{\NA}(X,\theta)_{>0}\hookrightarrow \PSH^{\NA}(X,\theta)
\]
is given by the set of  $\Gamma\in \PSH^{\NA}(X,\theta)$ with positive non-pluripolar mass.
\end{lemma}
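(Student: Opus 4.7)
My plan is to prove the two directions separately. For the forward direction, given $\Gamma\in\PSH^{\NA}(X,\theta)_{>0}\subseteq\PSH^{\NA'}(X,\theta)$, its image under the canonical injection has $\omega$-component at slice $\tau$ equal to $P_{\theta+\omega}[\Gamma_{\tau}]_{\mathcal{I}}$. Since $\Gamma_{\tau}\in\PSH(X,\theta)\subseteq\PSH(X,\theta+\omega)$ is $\mathcal{I}$-model and hence nonpositive, it competes in the defining envelope, so $\Gamma^{\theta+\omega}_{\tau}\geq\Gamma_{\tau}$ and then $\Gamma^{\theta+\omega}_{-\infty}\geq\Gamma_{-\infty}$ after taking regularized suprema. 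Combining Witt--Nystr\"om's monotonicity with the binomial expansion of $(\theta+\omega+\ddc\Gamma_{-\infty})^n$ as a sum of non-pluripolar products, I expect $\int_X(\theta+\omega+\ddc\Gamma^{\theta+\omega}_{-\infty})^n\geq\int_X\theta_{\Gamma_{-\infty}}^n>0$ for every K\"ahler $\omega$, so the non-pluripolar mass is positive.

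For the converse, I start with $\Gamma\in\PSH^{\NA}(X,\theta)$ of positive non-pluripolar mass $m$. Because the defining net is decreasing with limit $m$, every term is $\geq m$, so $\Gamma^{\theta+\omega}\in\PSH^{\NA}(X,\theta+\omega)_{>0}$ for every K\"ahler $\omega$. By \cref{lma:testcurvposmass} each slice $\Gamma^{\theta+\omega}_{\tau}$ has positive mass and is $\mathcal{I}$-good, and \cref{lma:supIbon} then promotes $\Gamma^{\theta+\omega}_{-\infty}$ to an $\mathcal{I}$-good, hence model, potential in $\PSH(X,\theta+\omega)$. I would next check that the projective-system compatibility extends to the $-\infty$ slice: for $\omega'\leq\omega$,
$$\nu(\Gamma^{\theta+\omega'}_{-\infty},E)=\lim_{\tau\to-\infty}\nu(\Gamma^{\theta+\omega'}_{\tau},E)=\lim_{\tau\to-\infty}\nu(\Gamma^{\theta+\omega}_{\tau},E)=\nu(\Gamma^{\theta+\omega}_{-\infty},E),$$
since the $\mathcal{I}$-envelope preserves generic Lelong numbers; the equality of $\mathcal{I}$-model potentials with matching generic Lelong numbers then yields $\Gamma^{\theta+\omega'}_{-\infty}=P_{\theta+\omega'}[\Gamma^{\theta+\omega}_{-\infty}]_{\mathcal{I}}$.

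Fixing a K\"ahler form $\omega$, I would set $\tilde\Gamma_{-\infty}:=\inf_k\Gamma^{\theta+k^{-1}\omega}_{-\infty}$, a decreasing sequence of model potentials with $\epsilon_k=k^{-1}$, so that \cref{lma:Imodeldeclelong}(1) directly gives
$$\int_X\theta_{\tilde\Gamma_{-\infty}}^n=\lim_k\int_X(\theta+k^{-1}\omega+\ddc\Gamma^{\theta+k^{-1}\omega}_{-\infty})^n=m>0.$$
To realize $\tilde\Gamma_{-\infty}$ as the $-\infty$ value of a test curve mapping to $\Gamma$, I would also set $\tilde\Gamma_{\tau}:=\inf_k\Gamma^{\theta+k^{-1}\omega}_{\tau}$ for $\tau<\Gamma_{\max}$. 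Concavity and monotonicity in $\tau$ pass to infima, and the positivity $\theta+k^{-1}\omega+\ddc\Gamma^{\theta+k^{-1}\omega}_{\tau}\geq 0$ yields $\tilde\Gamma_{\tau}\in\PSH(X,\theta)$ after $k\to\infty$. Both the $\mathcal{I}$-model property of $\tilde\Gamma_{\tau}$ and the compatibility $P_{\theta+\omega'}[\tilde\Gamma_{\tau}]_{\mathcal{I}}=\Gamma^{\theta+\omega'}_{\tau}$, which realizes $\Gamma$ as the image of $\tilde\Gamma$, would follow from another generic-Lelong-number comparison using \cref{lma:Imodeldeclelong}(2) and the same rigidity.

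The step I expect to be the main obstacle is the identification $\sups_{\tau}\tilde\Gamma_{\tau}=\tilde\Gamma_{-\infty}$, i.e.\ interchanging the regularized supremum over $\tau$ with the infimum over $k$. The inequality $\leq$ is immediate from the min-max inequality, but the reverse requires one more generic-Lelong-number computation showing both sides have the same Lelong number $\nu(\Gamma^{\theta+\omega}_{-\infty},E)$ on every prime divisor $E$ over $X$, together with the $\mathcal{I}$-model rigidity used throughout. Once this is secured, the positivity of $\int_X\theta_{\tilde\Gamma_{-\infty}}^n$ places $\tilde\Gamma$ in $\PSH^{\NA}(X,\theta)_{>0}$ and completes the lift.
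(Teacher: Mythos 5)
Your proposal is essentially the same as the paper's: both reduce to fixing a K\"ahler form $\omega$ and setting $\Gamma'_\tau=\inf_k\Gamma^{\theta+k^{-1}\omega}_\tau$, then invoking \cref{lma:Imodeldeclelong} for mass and Lelong-number compatibility. The paper's proof is just terser, asserting via ``the same arguments as \cref{lma:testcurvposmass}'' that $\lim_k\int_X(\theta+k^{-1}\omega+\ddc\Gamma^{\theta+k^{-1}\omega}_\tau)^n>0$ for each $\tau<\Gamma_{\max}$, and then concluding directly.

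Two comments. First, a small directional slip: for $\omega'\leq\omega$ the transition map sends the smaller form to the larger, so the compatibility reads $\Gamma^{\theta+\omega}_{-\infty}=P_{\theta+\omega}[\Gamma^{\theta+\omega'}_{-\infty}]_{\mathcal{I}}$, not the other way around (as written, $\Gamma^{\theta+\omega}_{-\infty}$ need not even be $(\theta+\omega')$-psh). Second, the ``main obstacle'' you single out, namely $\sups_\tau\tilde\Gamma_\tau=\inf_k\Gamma^{\theta+k^{-1}\omega}_{-\infty}$, is not what the definition of $\PSH^{\NA}(X,\theta)_{>0}$ asks you to verify. One only needs $\int_X\theta^n_{\sups_\tau\tilde\Gamma_\tau}>0$, and since $\sups_\tau\tilde\Gamma_\tau\geq\tilde\Gamma_{\tau_0}$ for any fixed $\tau_0<\Gamma_{\max}$, Witt--Nystr\"om monotonicity reduces this to $\int_X\theta^n_{\tilde\Gamma_{\tau_0}}>0$, i.e.\ to the positivity of $\lim_k\int_X(\theta+k^{-1}\omega+\ddc\Gamma^{\theta+k^{-1}\omega}_{\tau_0})^n$ at a single slice. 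This is precisely the step the paper handles by the \cref{lma:testcurvposmass}-style argument together with \cref{lma:Imodeldeclelong}(1); you do not need to identify the regularized supremum over $\tau$ with the infimum over $k$ at the level of potentials, and attempting to do so only reintroduces the circularity you were trying to avoid (the Lelong-number limit formula of \cref{lma:Imodeldeclelong}(2) itself presupposes positive mass of the limit).
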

\begin{proof}
It is clear that the image of an element in $\PSH^{\NA}(X,\theta)_{>0}$ has positive non-pluripolar mass. Conversely, take $\Gamma\in \PSH^{\NA}(X,\theta)$ with positive non-pluripolar mass. We want to construct $\Gamma'\in \PSH^{\NA}(X,\theta)_{>0}$ representing $\Gamma$.

Fix a K\"ahler form $\omega$ on $X$.
    Using the same arguments as \cref{lma:testcurvposmass}, we find that 
    \[
    \lim_{k\to\infty}\int_X \left(\theta+k^{-1}\omega+\ddc\Gamma^{\theta+k^{-1}\omega}_{\tau}\right)^n>0
    \]
    for any $\tau<\Gamma_{\max}$. We define 
    \[
    \Gamma'_{\tau}\coloneqq \lim_{k\to\infty}\Gamma^{\theta+k^{-1}\omega}_{\tau}
    \]
    for any $\tau<\Gamma_{\max}$. It follows from \cref{lma:Imodeldeclelong} that $\Gamma'$ represents $\Gamma$.
\end{proof}

\subsection{Comparison between the two theories}
Let $X$ be a reduced irreducible unibranch projective variety over $\mathbb{C}$ and $L$ be a pseudoeffective line bundle on $X$. Recall that in this case, the Zariski unibranchness is equivalent to the analytic unibranchness, as proved in \cite{Xia19}, so there is no ambiguity in the adjective \emph{unibranch}. 

Choose a smooth Hermitian metric $h_0$ on $L$ and let $\theta=c_1(L,h_0)$. In this case, both the Boucksom--Jonsson theory and the transcendental theory make sense, so we can consider the problem of comparison. 

\subsubsection{The general case}

\begin{definition}\label{def:napotentialdaleth}
    Given $\Gamma\in \PSH^{\NA}(X,\theta)$, we define its \emph{associated non\hyp{}Archimedean potential} $\Gamma^{\An}\colon X^{\Div}\rightarrow \mathbb{R}$ as follows:
    \begin{equation}\label{eq:dalethandiv}
    \Gamma^{\An}(c\ord_E) \coloneqq \sup_{\tau<\Gamma_{\max}} \left(\Gamma^{\theta+\omega,\An}_{\tau}(c\ord_E)+\tau\right)
    \end{equation}
    for any K\"ahler form $\omega$ on $X$. Clearly, this map is independent of the choice of $\omega$. 
\end{definition}
The map $\Gamma\mapsto \Gamma^{\An}$ is compatible with the inclusions \eqref{eq:caninjec}. In particular, given $\Gamma\in \QPSH^{\NA}(X)$, we can define $\Gamma^{\An}$ by choosing any representative of $\Gamma$. 

It is clear that the map $\Gamma\mapsto \Gamma^{\An}$ is injective.

\subsubsection{On a smooth variety}

In the sequel we assume that $X$ is smooth.  In this case, we have the techniques of multiplier ideal sheaves.

A potential $\varphi\in \PSH(X,\theta)$ defines a potential $\varphi^{\An}\in \PSH(L^{\An})$ as follows:
\[
\varphi^{\An}(v)\coloneqq -\lim_{k\to\infty} \frac{1}{k}v\left(\mathcal{I}(k\varphi)\right)
\]
for any $v\in X^{\An}$. As explained in \cite{BBJ21, DXZ23}, the limit exists and lies in $\PSH(L^{\An})$ and this function extends the non-Archimedean potential defined in \cref{def:napotentialdiv} on $X^{\Div}$. 

We can then define a map
\begin{equation}\label{eq:comparison}
\An\colon \PSH^{\NA}(X,\theta)\rightarrow \PSH(L^{\An})
\end{equation}
as follows: Take $\Gamma\in \PSH^{\NA}(X,\theta)$, its image is given by
\[
\Gamma^{\An}\colon X^{\An}\rightarrow [-\infty,\infty),\quad v\mapsto \sup_{\tau<\Gamma_{\max}} \left(\Gamma^{\theta+\omega,\An}_{\tau}(v)+\tau\right)
\]
for any K\"ahler form $\omega$ on $X$. Clearly, this map is independent of the choice of $\omega$. The map $\Gamma^{\An}$ extends the corresponding definition in \cref{def:napotentialdaleth}.

\begin{theorem}[{\cite{DXZ23}}]\label{thm:DXZ}
   Assume that $X$ is smooth, then the map \eqref{eq:comparison} is a bijection. 
\end{theorem}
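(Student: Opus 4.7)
The strategy is to invert the map \eqref{eq:comparison} via Legendre duality, which is natural since the defining formula $\Gamma^{\An}(v) = \sup_{\tau<\Gamma_{\max}}(\Gamma^{\theta+\omega,\An}_\tau(v) + \tau)$ is already a Legendre transform in $\tau$. On a divisorial valuation $c\ord_E$ this reads
\[
\Gamma^{\An}(c\ord_E) = \sup_\tau\bigl(\tau - c\,\nu(\Gamma^{\theta+\omega}_\tau, E)\bigr),
\]
exhibiting $c \mapsto \Gamma^{\An}(c\ord_E)$ as the Legendre--Fenchel transform in the rescaling variable $c > 0$ of the convex, increasing function $\tau \mapsto \nu(\Gamma^{\theta+\omega}_\tau, E)$; convexity follows from concavity of the test curve combined with additivity of generic Lelong numbers under sums of psh functions. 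Legendre inversion in $c$ then recovers all generic Lelong numbers $\nu(\Gamma^{\theta+\omega}_\tau, E)$ from the restriction $\Gamma^{\An}|_{X^{\Div}}$.

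Injectivity is now immediate: each $\Gamma^{\theta+\omega}_\tau$ is $\mathcal{I}$-model on the smooth variety $X$, and an $\mathcal{I}$-model potential is determined by its generic Lelong numbers along all prime divisors over $X$, via the valuative characterization of multiplier ideals furnished by log-resolutions. Hence the Lelong data recovered above reconstructs the entire projective system defining $\Gamma$.

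For surjectivity, given $\phi \in \PSH(L^{\An})$ with $\Gamma_{\max} \coloneqq \sup_{X^{\An}} \phi$, the natural candidate inverse for each K\"ahler form $\omega$ and each $\tau < \Gamma_{\max}$ is the envelope
\[
\tilde\Gamma^{\theta+\omega}_\tau \coloneqq P_{\theta+\omega}\bigl[\sups\{\psi \in \PSH(X,\theta+\omega) : \psi^{\An} \leq \phi - \tau \text{ on } X^{\An}\}\bigr]_{\mathcal{I}},
\]
where $\psi^{\An}$ is defined by the multiplier-ideal formula $\psi^{\An}(v) = -\lim_k k^{-1} v(\mathcal{I}(k\psi))$. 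That $\tilde\Gamma^{\theta+\omega}_\tau$ is $\mathcal{I}$-model, and concave and decreasing in $\tau$, is built into the construction; compatibility across the projective limit in $\omega$ reduces to \cref{lma:Imodeldeclelong}.

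The main obstacle is verifying $\tilde\Gamma^{\An} = \phi$, which is essentially the envelope conjecture of Boucksom--Jonsson, known on smooth varieties by \cite{BJ22b}: the upper semicontinuous regularization of the defining envelope has NA-potential equal to the upper semicontinuous regularization of $\phi - \tau$. Granting this, the identity $\tilde\Gamma^{\An}(v) = \phi(v)$ on all of $X^{\An}$ follows from a Legendre-transform manipulation in $\tau$ coupled with the multiplier-ideal formula above. The most delicate technical step is ensuring that Legendre inversion commutes with the projective limit over $\omega$, which again rests on \cref{lma:Imodeldeclelong} together with $d_S$-continuity of $\mathcal{I}$-model envelopes.
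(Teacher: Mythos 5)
The paper does not prove this result in-house; it simply cites \cite{DXZ23}, so I evaluate your sketch on its own terms. Your route — invoking the Boucksom–Jonsson envelope conjecture via \cite{BJ22b} — is genuinely different from \cite{DXZ23}, which constructs the inverse by self-contained $\mathcal{I}$-envelope and Ross–Witt Nystr\"om arguments and then \emph{derives} the envelope conjecture as a corollary (the paper itself remarks ``A different proof was given in \cite{DXZ23}''). Using \cite{BJ22b} as a black box is not circular, but it gives up that bonus. The injectivity half is fine in outline: concavity of $\tau\mapsto\Gamma_\tau$ plus linearity of generic Lelong numbers under convex combinations does make $\tau\mapsto\nu(\Gamma_\tau,E)$ convex and increasing, hence continuous on the open interval, so Legendre inversion in $c$ recovers it, and $\mathcal{I}$-model potentials are indeed determined by their Lelong numbers along prime divisors over $X$.

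The surjectivity half has a real gap. You assert that verifying $\tilde\Gamma^{\An}=\phi$ ``is essentially the envelope conjecture of Boucksom--Jonsson,'' but the envelope conjecture is a statement about suprema of \emph{non-Archimedean} psh functions, and that is not the only thing your construction needs. Two distinct ingredients are being conflated. First, a \emph{richness} result: that $\phi$ is the (usc-regularized) supremum of $\psi^{\An}$ over complex $\psi\in\PSH(X,\theta+\omega)$ with $\psi^{\An}\le\phi-\tau$. Without it, the family entering your envelope could be too small, giving $\tilde\Gamma_\tau^{\An}<\phi-\tau$ strictly; this density of analytically-realized NA potentials is a nontrivial theorem of \cite{BBJ21} (Demailly approximation plus base-ideal/multiplier-ideal comparison), not a consequence of \cite{BJ22b}. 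Second, the \emph{complex $\mathcal{I}$-envelope compatibility}: that $P_{\theta+\omega}[\sups_\psi\psi]_{\mathcal{I}}$ has the same multiplier ideals — hence NA potential — as the usc-regularized sup of the $\psi^{\An}$. This is the complex-analytic envelope theorem (\cite[Theorem 4.6]{Xia21}, quoted here as \cref{lma:supIbon}), proved by $L^2$-methods and logically independent of \cite{BJ22b}; it is the engine of \cite{DXZ23}. Both ingredients must be invoked explicitly; lumping them under ``essentially the envelope conjecture'' leaves the central verification unfinished.
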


Motivated by this bijection, we could translate the pluripotential-theoretic operations of $\PSH(L^{\An})$ to operations on the space $\PSH^{\NA}(X,\theta)$. Furthermore, in most cases, the resulting operations have natural extensions to the transcendental case as well. This is what we will carry out in the next section.

\section{Pluripotential-theoretic operations}
Let $X$ be a reduced irreducible unibranch compact K\"ahler space of dimension $n$ and $\theta$ be a closed real $(1,1)$-form on $X$. Assume that $\PSH(X,\theta)\neq \emptyset$.  When necessary, we use $\theta'$ and $\theta''$ to denote similar forms with the same properties as $\theta$.

\subsection{The partial order}
\begin{definition}
    We define a partial order $\leq$ on $\PSH^{\NA}(X,\theta)$ as follows: Given $\Gamma,\Gamma'\in \PSH^{\NA}(X,\theta)$, we say $\Gamma\leq \Gamma'$ if for some (hence for every) K\"ahler form $\omega$ on $X$, we have $\Gamma^{\theta+\omega}_{\tau}\leq \Gamma'^{\theta+\omega}_{\tau}$ for all $\tau<\Gamma_{\max}$.
\end{definition}
It is trivial to verify that this is indeed a partial order. Note that this partial order is compatible with the inclusions \eqref{eq:caninjec}, so it defines a partial order on $\QPSH^{\NA}(X)$.

\begin{lemma}\label{lma:valuationconc}
Suppose that $\Gamma,\Gamma'\in \PSH^{\NA}(X,\theta)$, then the following are equivalent:
\begin{enumerate}
    \item $\Gamma\leq \Gamma'$;
    \item $\Gamma^{\An}(c\ord_E)\leq \Gamma'^{\An}(c\ord_E)$ for all $c\ord_E\in X^{\Div}$.
\end{enumerate}
\end{lemma}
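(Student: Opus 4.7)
As in the proof of \cref{prop:decnetPproj}, I first reduce via pullback along a projective resolution to the case where $X$ is smooth. The implication $(1)\Rightarrow(2)$ is then immediate: $\Gamma\leq\Gamma'$ forces $\Gamma_{\max}\leq\Gamma'_{\max}$, since otherwise for $\tau\in(\Gamma'_{\max},\Gamma_{\max})$ one would need the $\theta$-psh function $\Gamma^{\theta+\omega}_\tau$ to be bounded above by $\Gamma'^{\theta+\omega}_\tau=-\infty$. The antitonicity of generic Lelong numbers in the potential then gives $\nu(\Gamma^{\theta+\omega}_\tau,E)\geq\nu(\Gamma'^{\theta+\omega}_\tau,E)$ for every $\tau<\Gamma_{\max}$ and every prime divisor $E$ over $X$, and feeding this into \eqref{eq:dalethandiv} yields $(2)$.

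For the substantive direction $(2)\Rightarrow(1)$, my strategy is to invert the Legendre-type formula \eqref{eq:dalethandiv} and recover each Lelong number $\nu(\Gamma^{\theta+\omega}_\tau,E)$ from the function $c\mapsto\Gamma^{\An}(c\ord_E)$. Fix a K\"ahler form $\omega$ and a prime divisor $E$ over $X$, and set $f_E(\tau):=-\nu(\Gamma^{\theta+\omega}_\tau,E)$ for $\tau<\Gamma_{\max}$. Then $f_E$ is usc, concave, and decreasing, the concavity coming from the concavity of $\tau\mapsto\Gamma^{\theta+\omega}_\tau$ together with the linearity of generic Lelong numbers, and \eqref{eq:dalethandiv} rewrites as $\Gamma^{\An}(c\ord_E)=\sup_\tau\bigl(cf_E(\tau)+\tau\bigr)$ for $c>0$. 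I plan to prove the Legendre inversion
\[
f_E(\tau_0)=\inf_{c>0}\frac{\Gamma^{\An}(c\ord_E)-\tau_0}{c}
\]
for each $\tau_0<\Gamma_{\max}$: the inequality $\leq$ is immediate, while $\geq$ follows by taking $c=-1/s$ for a supergradient $s<0$ of $f_E$ at $\tau_0$; the degenerate case $s=0$ forces $f_E$ to be constant on $(-\infty,\tau_0]$ and is handled by letting $c\to\infty$. Letting $c\to 0^+$ separately gives $\Gamma^{\An}(c\ord_E)\to\Gamma_{\max}$, so $(2)$ also yields $\Gamma_{\max}\leq\Gamma'_{\max}$. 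Applying the inversion to both $\Gamma$ and $\Gamma'$, hypothesis $(2)$ implies $\nu(\Gamma^{\theta+\omega}_\tau,E)\geq\nu(\Gamma'^{\theta+\omega}_\tau,E)$ for every $\tau<\Gamma_{\max}$ and every prime divisor $E$ over $X$.

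To pass from this Lelong-number inequality to $\Gamma^{\theta+\omega}_\tau\leq\Gamma'^{\theta+\omega}_\tau$, I consider the auxiliary $\eta:=\max\bigl(\Gamma^{\theta+\omega}_\tau,\Gamma'^{\theta+\omega}_\tau\bigr)\in\PSH(X,\theta+\omega)$, which satisfies $\eta\leq 0$ and $\nu(\eta,E)=\min\bigl(\nu(\Gamma^{\theta+\omega}_\tau,E),\nu(\Gamma'^{\theta+\omega}_\tau,E)\bigr)=\nu(\Gamma'^{\theta+\omega}_\tau,E)$ on every prime divisor $E$ over $X$. Since $\eta$ is itself a legitimate candidate in the supremum defining $P_{\theta+\omega}[\eta]_{\mathcal{I}}$, one has $\eta\leq P_{\theta+\omega}[\eta]_{\mathcal{I}}$. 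On the other hand, $P_{\theta+\omega}[\eta]_{\mathcal{I}}$ and $\Gamma'^{\theta+\omega}_\tau$ are both $\mathcal{I}$-model in $\PSH(X,\theta+\omega)$ with identical generic Lelong numbers on every prime divisor over $X$, so they coincide by the characterization of $\mathcal{I}$-model potentials via their generic Lelong numbers, as used already in the proof of \cref{prop:decnetPproj}. Hence $\Gamma^{\theta+\omega}_\tau\leq\eta\leq\Gamma'^{\theta+\omega}_\tau$, which is $(1)$. The main technical point I anticipate is the Legendre inversion at the boundary of the effective domain of $f_E$; the extension convention in \cref{rmk:extendtc} makes $f_E$ a closed proper concave function, so standard Legendre--Fenchel biduality handles the edge cases via limits in $c$.
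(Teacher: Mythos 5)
Your argument is correct. The paper omits its own proof, referring instead to \cite[Theorem~3.12]{DXZ23}, so there is nothing to compare against line by line; but your Legendre-type inversion together with the characterization of $\mathcal{I}$-model potentials by their divisorial Lelong numbers is exactly the natural (and, given the Ross--Witt Nystr\"om duality running through the whole framework, almost certainly the intended) mechanism.

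Two small points worth recording. First, the inversion
\[
f_E(\tau_0)=\inf_{c>0}\frac{\Gamma^{\An}(c\ord_E)-\tau_0}{c}
\]
is over real $c$, while $\Gamma^{\An}(c\ord_E)$ is a priori only defined for $c\in\mathbb{Q}_{>0}$; since $c\mapsto\sup_\tau(cf_E(\tau)+\tau)$ is convex, hence continuous, on $(0,\infty)$, the hypothesis (2) on rational $c$ extends to all real $c>0$ and the inversion is legitimate. Second, in the final step it is worth making explicit that $\eta=\max(\Gamma^{\theta+\omega}_\tau,\Gamma'^{\theta+\omega}_\tau)\leq 0$ (both being $\mathcal{I}$-model, hence $\leq 0$), so that $\eta$ really is a competitor for $P_{\theta+\omega}[\eta]_{\mathcal{I}}$, and that $P_{\theta+\omega}[\cdot]_{\mathcal{I}}$ preserves all generic Lelong numbers along prime divisors over $X$ because it preserves the multiplier ideal sequence; combined with the valuative determination of $\mathcal{I}$-model potentials already invoked in the proof of \cref{prop:decnetPproj}, this closes the argument. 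With these justifications supplied, the proposal stands.
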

The proof is almost identical to \cite[Theorem~3.12]{DXZ23}, we omit the details.

\subsection{Addition by a constant}
\begin{definition}
    Given $\Gamma\in \PSH^{\NA}(X,\theta)$ and $C\in \mathbb{R}$, we define the addition $\Gamma+C\in \PSH^{\NA}(X,\theta)$ as the element with components $(\Gamma+C)^{\theta+\omega}\colon (-\infty,\Gamma_{\max}+C)\rightarrow \PSH(X,\theta+\omega)$ given by
    \[
    (\Gamma+C)^{\theta+\omega}_{\tau}\coloneqq \Gamma^{\theta+\omega}_{\tau-C}.
    \]
\end{definition}
The associated non-Archimedean potential behaves as expected:
\begin{proposition}
    Given $\Gamma\in \PSH^{\NA}(X,\theta)$ and $C\in \mathbb{R}$, we have
    \begin{equation}
        (\Gamma+C)^{\An}(c\ord_E)=\Gamma^{\An}(c\ord_E)+C
    \end{equation}
    for all $c\ord_E\in X^{\Div}$ and 
    \[
    (\Gamma+C)_{\max}=\Gamma_{\max}+C.
    \]
\end{proposition}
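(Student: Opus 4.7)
The statement is essentially a compatibility of the definition of $\Gamma + C$ with the definition of $(\cdot)^{\An}$, so the plan is to unfold definitions and carry out an index shift. Before doing so, I would quickly check that $\Gamma + C$ as defined is a bona fide element of $\PSH^{\NA}(X,\theta)$: each component $(\Gamma+C)^{\theta+\omega}_{\tau} = \Gamma^{\theta+\omega}_{\tau-C}$ is $\mathcal{I}$-model, the map $\tau \mapsto \Gamma^{\theta+\omega}_{\tau-C}$ is decreasing and concave on $(-\infty, \Gamma_{\max}+C)$ since $\Gamma^{\theta+\omega}$ is, and the transition maps $p_{\theta+\omega,\theta+\omega'}$ commute with the translation of the $\tau$-variable (since they act pointwise in $\tau$). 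Thus $\Gamma+C \in \PSH^{\NA}(X,\theta)$.

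The equality $(\Gamma+C)_{\max} = \Gamma_{\max} + C$ is immediate from the definition of the domain of $(\Gamma+C)^{\theta+\omega}$.

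For the main formula, fix a K\"ahler form $\omega$ on $X$ and $c\ord_E \in X^{\Div}$. Unfolding \cref{def:napotentialdaleth}, one has
\[
(\Gamma+C)^{\An}(c\ord_E) = \sup_{\tau < \Gamma_{\max}+C} \left( (\Gamma+C)^{\theta+\omega,\An}_{\tau}(c\ord_E) + \tau \right) = \sup_{\tau < \Gamma_{\max}+C} \left( \Gamma^{\theta+\omega,\An}_{\tau-C}(c\ord_E) + \tau \right).
\]
Setting $\sigma \coloneqq \tau - C$, this sup equals
\[
\sup_{\sigma < \Gamma_{\max}} \left( \Gamma^{\theta+\omega,\An}_{\sigma}(c\ord_E) + \sigma + C \right) = \Gamma^{\An}(c\ord_E) + C,
\]
as required.

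There is essentially no obstacle; the only point one should not gloss over is the verification that $\Gamma + C$ is actually well-defined as an element of the projective limit $\varprojlim_\omega \PSH^{\NA'}(X,\theta+\omega)$, but this follows from the fact that the transition maps act pointwise on the $\tau$-parameter and therefore commute with the shift $\tau \mapsto \tau - C$.
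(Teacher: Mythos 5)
Your proof is correct and matches the paper's approach: the paper simply remarks that the proposition ``follows trivially from \eqref{eq:dalethandiv},'' and your unfolding of the definition together with the index shift $\sigma = \tau - C$ is exactly what that remark means, spelled out. Your preliminary check that $\Gamma + C$ is well-defined in the projective limit (because the transition maps act pointwise in $\tau$ and hence commute with the shift) is a sensible inclusion, though the paper treats it as part of the definition rather than something needing proof.
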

This follows trivially from \eqref{eq:dalethandiv}. We will omit similar proofs in the sequel.

Of course, this operation has the obvious compatibility in itself: Given another constant $C'\in \mathbb{R}$, we have
\[
(\Gamma+C)+C'=\Gamma+(C+C').
\]
It is compatible with the partial order in the following sense: Given another $\Gamma'\in \PSH^{\NA}(X,\theta)$, the following are equivalent:
\begin{enumerate}
    \item $\Gamma\leq \Gamma'$;
    \item $\Gamma+C\leq \Gamma'+C$.
\end{enumerate}

\subsection{Maximum}

\begin{definition}
    Suppose that $\Gamma,\Gamma'\in \PSH^{\NA}(X,\theta)_{>0}$, we define their \emph{maximum} $\Gamma\lor \Gamma'\in \PSH^{\NA}(X,\theta)_{>0}$ as the map $(-\infty,\Gamma_{\max}\lor \Gamma'_{\max})\rightarrow \PSH(X,\theta)$:
    \[
    \tau\mapsto P_{\theta}[\EC(\tau'\mapsto \Gamma_{\tau'}\lor \Gamma'_{\tau'})_{\tau}]_{\mathcal{I}},
    \]
    where $\EC$ denotes the concave hull.
\end{definition}
We need to verify that $\Gamma\lor \Gamma'\in \PSH^{\NA}(X,\theta)_{>0}$. The only non-trivial part is to show that the concave envelope $\EC(\tau'\mapsto \Gamma_{\tau'}\lor \Gamma'_{\tau'})$ is a curve of $\theta$-psh functions. For this purpose, we may assume that $X$ is smooth and connected. According to the Ross--Witt Nystr\"om correspondence (\cite[Section~5, Section~6]{RWN14} and \cite[Theorem~2.6]{DXZ23}), $\Gamma$ and $\Gamma'$ correspond to geodesic rays $\ell$ and $\ell'$ in $\PSH(X,\theta)$: For any $t\geq 0$, we have
\[
\ell_t=\sup_{\tau\in \mathbb{R}} (\Gamma_{\tau}+t\tau),\quad \ell'_t=\sup_{\tau\in \mathbb{R}} (\Gamma'_{\tau}+t\tau).
\]
It follows that
\[
\ell_t\lor \ell'_t=\sup_{\tau\in \mathbb{R}} (\Gamma_{\tau}\lor \Gamma'_{\tau}+t\tau).
\]
Hence,
\[
\EC(\tau\mapsto \Gamma_{\tau}\lor \Gamma'_{\tau})_{\tau'}=\inf_{t\geq 0}(\ell_t\lor \ell'_t-t\tau')
\]
and $\EC(\tau\mapsto \Gamma_{\tau}\lor \Gamma'_{\tau})_{\tau'}\in \PSH(X,\theta)\cup \{-\infty\}$ by Kiselman's principle.

By a similar argument, we have
\begin{corollary}\label{cor:napotmax}
Suppose that $\Gamma,\Gamma'\in \PSH^{\NA}(X,\theta)_{>0}$. For any $c\ord_E\in X^{\Div}$, we have
\[
-(\Gamma\lor\Gamma')_{\tau}^{\An}(c\ord_E)=\EC\left(\tau'\mapsto \left(-\Gamma_{\tau'}^{\An}(c\ord_E)\right)\lor \left(-\Gamma'^{\An}_{\tau'}(c\ord_E)\right) \right)_{\tau}.
\]
In particular,
\begin{equation}\label{eq:loranmax}
(\Gamma\lor\Gamma')^{\An}(c\ord_E)=\Gamma^{\An}(c\ord_E)\lor \Gamma'^{\An}(c\ord_E).
\end{equation}
\end{corollary}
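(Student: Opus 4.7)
The plan is to carry out the Ross--Witt Nyström argument of the paragraph immediately preceding the corollary and then apply the generic Lelong number functional $\nu(\cdot,E)$ to both sides. I would first reduce to the case where $X$ is smooth via a resolution $\pi\colon Y\to X$, since both the transition maps $p_{\theta,\theta+\omega}$ and the generic Lelong numbers along prime divisors over $X$ transport compatibly under $\pi$.

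Working on smooth $X$, I would reuse the rays $\ell_t=\sup_\tau(\Gamma_\tau+t\tau)$ and $\ell'_t=\sup_\tau(\Gamma'_\tau+t\tau)$ together with the identity
\[
\EC(\tau'\mapsto \Gamma_{\tau'}\lor\Gamma'_{\tau'})_\tau = \inf_{t\geq 0}(\ell_t\lor\ell'_t-t\tau).
\]
Applying $-c\nu(\cdot,E)$ to both sides, the pointwise identities $\nu(\varphi\lor\psi,E)=\nu(\varphi,E)\land\nu(\psi,E)$ and the invariance of $\nu$ under addition of constants convert the right hand side into a Legendre transform in $\tau$ of the convex function
\[
\tau'\mapsto c\nu(\Gamma_{\tau'},E)\lor c\nu(\Gamma'_{\tau'},E)=\bigl(-\Gamma^{\An}_{\tau'}(c\ord_E)\bigr)\lor\bigl(-\Gamma'^{\An}_{\tau'}(c\ord_E)\bigr),
\]
and the Legendre--Fenchel bipolar identity then identifies the resulting one-variable inverse transform with the concave envelope in the statement. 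For the left hand side, one uses that $P_\theta[\cdot]_{\mathcal{I}}$ preserves generic Lelong numbers (since $\mathcal{I}$-equivalent potentials have identical generic Lelong numbers along every prime divisor) to identify the result with $-(\Gamma\lor\Gamma')^{\An}_\tau(c\ord_E)$, yielding the first displayed identity.

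The in-particular clause \eqref{eq:loranmax} is then immediate: taking $\sup_\tau(\,\cdot\,+\tau)$ of both sides of the first identity commutes both with $\EC$ (the bipolar identity at the level of sup-transforms) and with $\lor$, producing $\Gamma^{\An}(c\ord_E)\lor \Gamma'^{\An}(c\ord_E)$ via \eqref{eq:dalethandiv}. The main technical delicacy I anticipate is the exchange of $\nu(\cdot,E)$ with the infimum $\inf_{t\geq 0}$ defining the inverse Legendre transform: since the family $\{\ell_t\lor\ell'_t-t\tau\}_{t\geq 0}$ is not monotone in $t$, one has to extract a cofinal decreasing subfamily of $\mathcal{I}$-model potentials (allowed by the positive-mass hypothesis $\Gamma,\Gamma'\in\PSH^{\NA}(X,\theta)_{>0}$ via \cref{lma:testcurvposmass}) and invoke \cref{lma:Imodeldeclelong} to pass the generic Lelong number through the decreasing limit.
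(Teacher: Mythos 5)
Your high-level outline (reduce to $X$ smooth, run the Ross--Witt Nystr\"om correspondence as in the paragraph preceding the corollary, apply $\nu(\cdot,E)$, conclude via Legendre--Fenchel biduality) is the right framework and matches the spirit of the paper's ``by a similar argument.'' However, the place where you locate the difficulty, and the fix you propose for it, are both off.

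The issue is your treatment of the exchange of $\nu(\cdot,E)$ with $\inf_{t\ge 0}$. For fixed $\tau$ and each $x\in X$, the map $t\mapsto(\ell_t\lor\ell'_t)(x)-t\tau$ is \emph{convex} in $t$ (pointwise max of two convex functions of $t$, minus a linear term), not monotone, and the $t$ at which the infimum is achieved varies with $x$. There is therefore no cofinal decreasing subfamily in $t$ to extract, and even if there were, the potentials $\ell_t\lor\ell'_t-t\tau$ are not model, so \cref{lma:Imodeldeclelong} (which is stated for decreasing sequences of \emph{model} potentials) does not apply. Worse, a naive commutation of $\nu$ with the infimum cannot be what is wanted, because $\nu(\ell_t\lor\ell'_t-t\tau,E)=\nu(\ell_t\lor\ell'_t,E)$ is independent of $\tau$, whereas the left-hand side $\nu\bigl(\inf_{t\geq 0}(\ell_t\lor\ell'_t-t\tau),E\bigr)$ genuinely depends on $\tau$; so the claimed identity is not an exchange of $\nu$ with $\inf_{t}$ at all. (What is true without effort is the one-sided bound $\nu(\inf_t\varphi_t,E)\geq\sup_t\nu(\varphi_t,E)$; the content of the corollary sits on the opposite side.)

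The route that actually closes the argument, and the one implicit in ``a similar argument,'' is to apply the Ross--Witt Nystr\"om duality directly at the level of Lelong numbers rather than try to commute $\nu$ past a nonmonotone infimum. Concretely: the ray dual to $\Gamma\lor\Gamma'$ is $\ell\lor\ell'$ (this is the computation in the preceding paragraph), and the cited correspondence for $\mathcal{I}$-model test curves gives, for each of $\Gamma$, $\Gamma'$ and $\Gamma\lor\Gamma'$, that $-c\,\nu(\ell_t,E)$ is the Legendre conjugate in $t$ of $\tau\mapsto\Gamma_\tau^{\An}(c\ord_E)$. Combining this with $\nu(\ell_t\lor\ell'_t,E)=\nu(\ell_t,E)\land\nu(\ell'_t,E)$ and applying the Fenchel--Moreau biduality to the resulting one-variable concave functions yields the first display, and then $\sup_\tau(\cdot+\tau)$ gives \eqref{eq:loranmax} exactly as you say. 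The point is that the nontrivial input is the cited duality statement about Lelong numbers of rays versus $\mathcal{I}$-model test curves, not a limiting argument against the $t$-family. As a secondary remark, you should also confirm the sign bookkeeping in the target display: since $\Gamma_{\tau'}^{\An}(c\ord_E)=-c\nu(\Gamma_{\tau'},E)$ is concave in $\tau'$, the envelope that naturally appears is the concave hull of $\Gamma_{\tau'}^{\An}\lor\Gamma'^{\An}_{\tau'}$, not of its negation; your ``bipolar identity'' step is vague enough that it is unclear whether you land on the correct side.
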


It follows from \cref{cor:napotmax} and \cref{lma:valuationconc} that the maximum operation is compatible with the projective system \eqref{eq:PSHNAdef}. Hence, we can make the following definition in general:
\begin{definition}
    Suppose that $\Gamma,\Gamma'\in \PSH^{\NA}(X,\theta)$, we define their \emph{maximum} $\Gamma\lor \Gamma'\in \PSH^{\NA}(X,\theta)$ as the element with components 
    \[
    (\Gamma\lor \Gamma')^{\theta+\omega}=\Gamma^{\theta+\omega}\lor \Gamma'^{\theta+\omega}.
    \]
\end{definition}
The associated non-Archimedean potential behaves as expected:
\begin{corollary}\label{cor:napotmaxgen}
Suppose that $\Gamma,\Gamma'\in \PSH^{\NA}(\theta)$. For any $c\ord_E\in X^{\Div}$, we have
\begin{equation}\label{eq:loranmaxgen}
(\Gamma\lor\Gamma')^{\An}(c\ord_E)=\Gamma^{\An}(c\ord_E)\lor \Gamma'^{\An}(c\ord_E)
\end{equation}
and 
\[
(\Gamma\lor\Gamma')_{\max}=\Gamma_{\max}\lor \Gamma'_{\max}.
\]
\end{corollary}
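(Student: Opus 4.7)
The plan is to reduce everything to \cref{cor:napotmax} applied one component of the projective limit at a time. Fix a K\"ahler form $\omega$ on $X$; the components $\Gamma^{\theta+\omega}, \Gamma'^{\theta+\omega}$ lie in $\PSH^{\NA}(X,\theta+\omega)_{>0}$, since the transition maps land in the positive-mass part once a K\"ahler form has been added (cf.\ the argument of \cref{lma:testcurvposmass}). By the very construction of $\lor$ on $\PSH^{\NA}(X,\theta)$ as a component-wise operation, $(\Gamma \lor \Gamma')^{\theta+\omega} = \Gamma^{\theta+\omega} \lor \Gamma'^{\theta+\omega}$ in $\PSH^{\NA}(X,\theta+\omega)_{>0}$, so \cref{cor:napotmax} directly yields the analogue of \eqref{eq:loranmaxgen} for the three $\omega$-components.

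To pass from this component identity to \eqref{eq:loranmaxgen} itself I would invoke \cref{def:napotentialdaleth}: for every $\Xi \in \PSH^{\NA}(X,\theta)$, its $\An$-potential is computed as $\Xi^{\An}(c\ord_E) = \sup_{\tau < \Xi_{\max}} (\Xi^{\theta+\omega,\An}_\tau(c\ord_E) + \tau)$, which also computes the $\An$-potential of the component $\Xi^{\theta+\omega}$ regarded as an element of $\PSH^{\NA}(X,\theta+\omega)_{>0}$ via the same supremum formula. Applying this observation with $\Xi = \Gamma$, $\Gamma'$, and $\Gamma \lor \Gamma'$, and plugging into the $\omega$-component identity, immediately yields \eqref{eq:loranmaxgen}. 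For the statement on the endpoints, $\Gamma_{\max}$ is by definition the common value $\Gamma^{\theta+\omega}_{\max}$, and $(\Gamma^{\theta+\omega} \lor \Gamma'^{\theta+\omega})_{\max} = \Gamma^{\theta+\omega}_{\max} \lor \Gamma'^{\theta+\omega}_{\max}$ is immediate from inspection of the domain of definition of the $>0$-level maximum, so the projective-limit definition then gives $(\Gamma \lor \Gamma')_{\max} = \Gamma_{\max} \lor \Gamma'_{\max}$.

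I do not expect any real obstacle here: the corollary is a formal consequence of \cref{cor:napotmax} together with an unwinding of the projective-limit definitions. All the nontrivial content --- the concavification of pointwise maxima of test curves, Kiselman's principle, and the Ross--Witt Nystr\"om correspondence --- has already been absorbed into the proof of \cref{cor:napotmax} in the positive-mass case, and the passage to the general class $\PSH^{\NA}(X,\theta)$ is purely bookkeeping.
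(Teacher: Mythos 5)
Your argument is correct and is exactly what the paper has in mind: the general maximum is defined component-wise, and \cref{cor:napotmaxgen} follows by applying \cref{cor:napotmax} to each component and then using the $\omega$-independence of the supremum formula in \cref{def:napotentialdaleth} together with the fact that $\Gamma^{\theta+\omega}_{\max}$ is independent of $\omega$. The paper does not spell this out (it omits the proof as ``similar''), but the structure is the same. One small point: to justify that $\Gamma^{\theta+\omega}$ lands in $\PSH^{\NA}(X,\theta+\omega)_{>0}$, the reference to the argument of \cref{lma:testcurvposmass} is a bit off target; the cleaner justification is that for $\varphi\in\PSH(X,\theta+\omega/2)\subset\PSH(X,\theta+\omega)$ one has $\int_X(\theta+\omega+\ddc\varphi)^n\geq\int_X(\omega/2)^n>0$ by monotonicity of non-pluripolar masses, which is exactly the observation the paper uses implicitly in the unnamed lemma right after \cref{def:PSHNA1} asserting $\varprojlim_{\omega}\PSH^{\NA}(X,\theta+\omega)_{>0}\rightarrow\PSH^{\NA}(X,\theta)$ is a bijection.
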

The maximum operation is commutative and associative. In particular, finite (non-empty) maximum makes sense in the obvious way. 

The maximum operation is compatible with the partial order in the following sense: Given $\Gamma,\Gamma'\in \PSH^{\NA}(\theta)$, the following are equivalent
\begin{enumerate}
    \item $\Gamma\lor \Gamma'=\Gamma'$;
    \item $\Gamma\leq \Gamma'$.
\end{enumerate}

The maximum operation is compatible with the addition by a constant: If $C\in \mathbb{R}$, then
\[
(\Gamma\lor \Gamma')+C=(\Gamma+C)\lor (\Gamma'+C).
\]

\subsection{Decreasing limit along a net}
\begin{definition}
    Let $(\Gamma_i)_{i\in I}$ be a decreasing net in $\PSH^{\NA}(X,\theta)$. Assume that $\inf_{i\in I}\Gamma_{i,\max}>-\infty$. We define $\inf_{i\in I}\Gamma_i\in \PSH^{\NA}(X,\theta)$ as the element with components 
    \[
    \left(\inf_{i\in I}\Gamma_i \right)^{\theta+\omega}_{\tau}=\inf_{i\in I} \Gamma^{\theta+\omega}_{i,\tau}.
    \]
    This gives an element in $\PSH^{\NA}(X,\theta)$ by \cref{prop:decnetPproj} together with \cite[Lemma~2.21(i)]{DX22}\footnote{The result is stated for a sequence, but the proof extends to a net.}. 
\end{definition}
When $\inf_{i\in I}\Gamma_{i,\max}=-\infty$, we could still formally define $\inf_{i\in I}\Gamma_i$ as a symbol $-\infty$. As one could easily verify, all of our pluripotential-theoretic operations admit natural extensions to $-\infty$.

Using \cite[Lemma~3.13]{DXZ23}, we have
\begin{lemma}\label{lma:decnetna}
    Let $(\Gamma_i)_{i\in I}$ be a decreasing net in $\PSH^{\NA}(X,\theta)$ with $\inf_{i\in I}\Gamma_i\in \PSH^{\NA}(X,\theta)$. Then for any $c\ord_E\in X^{\Div}$,
    \begin{equation}
        \left(\inf_{i\in I}\Gamma_i\right)^{\An}(c\ord_E)=\inf_{i\in I}\Gamma_i^{\An}(c\ord_E).
    \end{equation}
    Moreover,
    \[
    \left(\inf_{i\in I}\Gamma_i \right)_{\max}=\inf_{i\in I}\Gamma_{i,\max}.
    \]
\end{lemma}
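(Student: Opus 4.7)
The plan is to fix a K\"ahler form $\omega$ on $X$ and let $\Psi \coloneqq \inf_{i\in I}\Gamma_i$, so by the definition preceding the statement $\Psi^{\theta+\omega}_\tau = \inf_{i\in I}\Gamma^{\theta+\omega}_{i,\tau}$. Using the extension convention of \cref{rmk:extendtc}, each $\Gamma^{\theta+\omega}_{i,\tau}$ equals $-\infty$ exactly when $\tau > \Gamma_{i,\max}$, so the componentwise infimum is already $-\infty$ as soon as a single $\Gamma^{\theta+\omega}_{i,\tau}$ is, which happens for every $\tau > \inf_i \Gamma_{i,\max}$. Conversely, for $\tau < \inf_i \Gamma_{i,\max}$ each $\Gamma^{\theta+\omega}_{i,\tau}$ has positive mass by \cref{lma:testcurvposmass}, and the decreasing-net version of \cref{prop:decnetPproj} (used to construct $\Psi$ in the first place) guarantees that $\Psi^{\theta+\omega}_\tau$ is itself $\mathcal{I}$-model of positive mass, hence not $-\infty$. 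This will yield $\Psi_{\max} = \inf_{i\in I}\Gamma_{i,\max}$ directly.

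For the divisorial identity, I will introduce
\[
g_i(\tau) \coloneqq c\,\nu\bigl(\Gamma^{\theta+\omega}_{i,\tau}, E\bigr),\qquad g(\tau) \coloneqq c\,\nu\bigl(\Psi^{\theta+\omega}_\tau, E\bigr),
\]
extended by $+\infty$ beyond the respective $\Gamma_{i,\max}$ and $\Psi_{\max}$. The concavity of $\tau \mapsto \Gamma^{\theta+\omega}_{i,\tau}$, together with the fact that the generic Lelong number is additive and order-reversing in the potential, makes each $g_i$ convex in $\tau$; \cref{lma:Imodeldeclelong} then provides lower semicontinuity of $g_i$ at the right endpoint $\tau = \Gamma_{i,\max}$. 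The Lelong-number continuity for decreasing nets of $\mathcal{I}$-model potentials that already drives the proof of \cref{prop:decnetPproj} will give $g(\tau) = \sup_{i\in I} g_i(\tau)$ for every $\tau < \Psi_{\max}$. Unfolding \cref{def:napotentialdaleth} rewrites the two sides of the claimed identity as
\[
\Psi^{\An}(c\ord_E) = \sup_{\tau}\bigl(\tau - g(\tau)\bigr) = \sup_{\tau}\inf_{i\in I}\bigl(\tau - g_i(\tau)\bigr), \qquad \inf_{i\in I}\Gamma_i^{\An}(c\ord_E) = \inf_{i\in I}\sup_{\tau}\bigl(\tau - g_i(\tau)\bigr).
\]

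The heart of the lemma is therefore the swap of $\sup_\tau$ and $\inf_i$ on the right. In Legendre-dual language this is exactly the statement that for an increasing net of proper convex lower semicontinuous functions the Legendre transform converts $\sup_i$ into $\inf_i$, evaluated at the slope $s = 1$. This is precisely the abstract convex-analytic assertion recorded as \cite[Lemma~3.13]{DXZ23}, whose hypotheses are met by the previous paragraph; invoking it will close the argument. I expect the main technical subtlety to be verifying the lower semicontinuity of the $g_i$ at the endpoint $\tau = \Gamma_{i,\max}$ and the corresponding behaviour of the suprema on the closure of the domain; the extension convention of \cref{rmk:extendtc} together with \cref{lma:Imodeldeclelong} is what will make this automatic, so the proof should reduce to checking these hypotheses and then citing the convex-analytic lemma.
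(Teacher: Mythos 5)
Your proposal is correct and follows the same route as the paper, whose entire proof consists of the single citation of \cite[Lemma~3.13]{DXZ23}. Your argument — fixing $\omega$, reading off $\Psi_{\max}$ from the extension convention of \cref{rmk:extendtc}, reducing to the Lelong-number functions $g_i$ and $g$, and then recognizing the divisorial identity as a Legendre-duality minimax at slope $s=1$ — is a faithful unpacking of exactly what that cited lemma encapsulates, so you have essentially reconstructed the content the paper keeps implicit.
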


This operation is compatible with itself: If $I$ and $J$ are two non-empty directed set and $\Gamma_{ij}$ is a decreasing net in $\PSH^{\NA}(X,\theta)$ indexed by $I\times J$ with $\inf_{i,j}\Gamma_{ij}\in \PSH^{\NA}(X,\theta)$, then
\[
\inf_{i,j}\Gamma_{ij}=\inf_{i\in I}\inf_{j\in J}\Gamma_{ij}=\inf_{j\in J}\inf_{i\in I}\Gamma_{ij}.
\]

This operation is compatible with the partial order in the following senses: If $\Gamma_i\in \PSH^{\NA}(X,\theta)$ ($i\in I$) is a decreasing net with $\inf_{i\in I}\Gamma_i\in \PSH^{\NA}(X,\theta)$. Then $\inf_{j\in I}\Gamma_j\leq \Gamma_i$ for all $i\in I$ and it is the biggest element in $\PSH^{\NA}(X,\theta)$ with this property. On the other hand, if $\Gamma'_i\in \PSH^{\NA}(X,\theta)$ ($i\in I$) is another decreasing net with $\inf_{i\in I}\Gamma'_i\in \PSH^{\NA}(X,\theta)$ such that $\Gamma_i\leq \Gamma'_i$ for all $i\in I$, then
\[
\inf_{i\in I}\Gamma_i\leq \inf_{i\in I}\Gamma'_i.
\]

This operation is compatible with the addition by a constant: If $C\in \mathbb{R}$, then
\[
\inf_{i\in I}(\Gamma_i+C)=\left(\inf_{i\in I}\Gamma_i\right)+C.
\]

\subsection{Regularized supremum}
\begin{definition}
    Let $(\Gamma^i)_{i\in I}$ be an increasing net in $\PSH^{\NA}(X,\theta)_{>0}$. We assume that $\sup_{i\in I}\Gamma^i_{\max}<\infty$. We define a map $\sup_{i\in I}\Gamma^i_{\max}\colon (-\infty,\sup_i\Gamma^i_{\max})\rightarrow \PSH(X,\theta)$ as follows:
\[
\left(\sups_{i\in I} \Gamma^i\right)_{\tau}=P_{\theta}\left[\sups_{i\in I} \Gamma^i_{\tau}\right].
\]
By \cref{lma:supIbon}, it is easy to see that $\sups_i \Gamma^i\in \PSH^{\NA}(X,\theta)_{>0}$.

More generally, suppose that $\Gamma^i\in \PSH^{\NA}(X,\theta)$ ($i\in I$) is an increasing net with $\sup_i\Gamma^i_{\max}<\infty$.
We define $\sups_i \Gamma^i\in \PSH^{\NA}(X,\theta)$ as the element with components 
\begin{equation}\label{eq:Psupext}
\left(\sups_{i\in I} \Gamma^i\right)^{\theta+\omega}=\sups_{i\in I} \Gamma^{i,\theta+\omega}
\end{equation}
for any K\"ahler form $\omega$ on $X$. The compatibility follows from \cite[Lemma~3.14]{DX21}.
\end{definition}

\begin{definition}
    Let $(\Gamma^i)_{i\in I}$ be a (non-empty) family in $\PSH^{\NA}(X,\theta)$ such that $\sup_{i\in I}\Gamma^i_{\max}<\infty$. Let $\mathcal{J}$ denote the directed set of finite non-empty subsets of $I$ ordered by inclusion. Then we define
    \[
    \sups_{i\in I}\Gamma^i\coloneqq \sups_{J\in \mathcal{J}} \left(\max_{j\in J}\Gamma^j\right).
    \]
\end{definition}
Recall that the maximum is defined right after \cref{cor:napotmaxgen}.

\begin{theorem}\label{thm:enve}
    Let $(\Gamma^i)_{i\in I}$ be a (non-empty) family in $\PSH^{\NA}(X,\theta)$ with $\sup_{i\in I}\Gamma^i_{\max}<\infty$. Then for any $c\ord_E\in X^{\Div}$, we have
    \[
    \left(\sups_{i\in I}\Gamma^i\right)^{\An}(c\ord_E)=\sup_{i\in I}\Gamma^{i,\An}(c\ord_E).
    \]
    Moreover,
    \[
    \left(\sups_{i\in I}\Gamma^i\right)_{\max}=\sup_{i\in I}\Gamma^i_{\max}.
    \]
\end{theorem}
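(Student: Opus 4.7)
The plan is to reduce to the case of an increasing net and then to a single generic-Lelong-number identity along $E$.

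By the defining formula $\sups_{i\in I}\Gamma^i = \sups_{J\in\mathcal{J}}(\max_{j\in J}\Gamma^j)$, together with \cref{cor:napotmaxgen}---which yields both $(\max_{j\in J}\Gamma^j)^{\An}(c\ord_E)=\max_{j\in J}\Gamma^{j,\An}(c\ord_E)$ and $(\max_{j\in J}\Gamma^j)_{\max}=\max_{j\in J}\Gamma^j_{\max}$---it suffices to establish both claimed identities when $(\Gamma^i)_{i\in I}$ is an increasing net. For such a net the assertion $(\sups_i\Gamma^i)_{\max}=\sup_i\Gamma^i_{\max}$ is immediate from the construction of the regularized supremum, since $\Gamma^{i,\theta+\omega}_\tau\equiv -\infty$ for $\tau>\Gamma^i_{\max}$.

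Next, fix a K\"ahler form $\omega$ and a divisorial point $c\ord_E\in X^{\Div}$. Pulling back along a projective resolution $\pi\colon Y\to X$ reduces the problem to the smooth setting, and by \cref{lma:positivemasspart} one may pass to positive-mass representatives in $\PSH^{\NA}(X,\theta+\omega)_{>0}$, in which the explicit formula
\[
(\sups_i\Gamma^i)^{\theta+\omega}_\tau = P_{\theta+\omega}\bigl[\sups_i \Gamma^{i,\theta+\omega}_\tau\bigr]
\]
applies. Substituting into \eqref{eq:dalethandiv} and using the interchanges $\sup_i\sup_\tau=\sup_\tau\sup_i$ and $\sup_i(-c x_i)=-c\inf_i x_i$ (for $c>0$), the first assertion reduces to the key identity
\[
\nu\bigl((\sups_i\Gamma^i)^{\theta+\omega}_\tau,\, E\bigr) = \inf_i \nu\bigl(\Gamma^{i,\theta+\omega}_\tau,\, E\bigr),
\]
valid for every $\tau<\sup_i\Gamma^i_{\max}$.

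Because the model envelope $P_{\theta+\omega}[\cdot]$ preserves generic Lelong numbers along prime divisors, this is equivalent to the same identity with the envelope dropped. The $\leq$ direction is the monotonicity of generic Lelong numbers, since $\sups_i\Gamma^{i,\theta+\omega}_\tau\geq \Gamma^{i,\theta+\omega}_\tau$ for every $i$. For the reverse direction I plan to invoke the $d_S$-continuity of generic Lelong numbers from \cite[Theorem~0.10]{Xia22}, combined with the increasing-net analogue of \cref{prop:decnetPproj}, namely that an increasing net of $\mathcal{I}$-model potentials $d_S$-converges to its regularized supremum; Choquet's lemma then reduces the net to a countable sequence.

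The principal obstacle is precisely this $d_S$-convergence for increasing regularized suprema, which is the natural dual of \cite[Proposition~4.8]{DDNLmetric} but is not recorded explicitly in the excerpt. I expect it to follow from \cref{lma:supIbon} together with the $d_S$-machinery of \cite{DDNLmetric, Xia21, Xia22}; once this step is in place, the remaining arguments are the formal manipulations of suprema outlined above.
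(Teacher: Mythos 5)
Your reduction steps track the paper's closely: you use \cref{cor:napotmaxgen} to pass to an increasing net, observe the $\Gamma_{\max}$ identity directly, pass to positive-mass representatives in $\PSH^{\NA}(X,\theta+\omega)_{>0}$ via \cref{lma:positivemasspart}, and then manipulate suprema to reduce everything to a Lelong-number statement. That much is exactly what the paper does. The divergence is at the end: the paper settles the increasing-net case in one stroke by citing \cite[Lemma~3.13]{DXZ23}, whereas you try to reconstruct that lemma from the $d_S$-machinery and, to your credit, flag the exact spot where you stop.

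The flagged step is a genuine gap, not a formality. Your argument hinges on the identity $\nu\bigl(\sups_i\Gamma^{i,\theta+\omega}_\tau,\,E\bigr)=\inf_i\nu\bigl(\Gamma^{i,\theta+\omega}_\tau,\,E\bigr)$, and you propose to obtain it via ``an increasing-net analogue of \cref{prop:decnetPproj},'' i.e.\ $d_S$-convergence of the regularized supremum. That increasing-limit $d_S$-statement is precisely the transcendental envelope theorem in disguise --- it is the content of \cite[Theorem~4.6]{Xia21} / \cite[Lemma~3.13]{DXZ23}, and it is exactly what makes \cref{thm:enve} a theorem rather than a triviality. It does not follow from \cref{lma:supIbon} alone: $\mathcal{I}$-goodness of $\sups_i\Gamma^i_\tau$ tells you the envelope is well-behaved, but not yet that the Lelong numbers pass to the limit. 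So as written your proposal assumes a statement of comparable depth to the one being proved. If you want a self-contained argument, a cleaner route than $d_S$-convergence is available once you know the sup is $\mathcal{I}$-good: since each $\Gamma^i_\tau$ is $\mathcal{I}$-model, its Lelong numbers are read off from multiplier ideals, one has $\ord_E\mathcal{I}\bigl(k\sups_i\Gamma^i_\tau\bigr)=\inf_i\ord_E\mathcal{I}(k\Gamma^i_\tau)$ (strong openness plus Noetherianity), and Fekete subadditivity in $k$ lets you swap the two infima to get the desired Lelong-number identity directly, after which the envelope step you already noted ($P_{\theta+\omega}[\cdot]$ preserving generic Lelong numbers) finishes the job. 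Either way, the missing lemma has to be actually proved or explicitly cited; as it stands your proposal has correctly located, but not closed, the heart of the matter.
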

This result is the transcendental analogue of Boucksom--Jonsson's envelope conjecture. As explained in \cite[Theorem~3.4]{DXZ23}, when $X$ is smooth and projective, this result implies Boucksom--Jonsson's envelope conjecture.
\begin{proof}
    By \cref{cor:napotmaxgen}, we may assume that $(\Gamma^i)_{i\in I}$ is an increasing net. Furthermore, it is easy to reduce to the case where $\Gamma^i\in \PSH^{\NA}(X,\theta)_{>0}$. In fact, by definition,
    \[
    \left(\sups_{i\in I}\Gamma^i\right)^{\An}=\left(\sups_{i\in I}\Gamma^{i,\theta+\omega}\right)^{\An}
    \]
    for any K\"ahler form $\omega$ on $X$. A similar equation holds for $(\bullet)_{\max}$ as well. So we could replace $\Gamma^i$ by $\Gamma^{i,\theta+\omega}$ and assume that $\Gamma^i\in \PSH^{\NA}(X,\theta)_{>0}$.

    In this case, the result follows from \cite[Lemma~3.13]{DXZ23}.
\end{proof}
In particular, this operation extends the finite maximum.

This operation is compatible with itself: If $(\Gamma^{i,j})_{i\in I,j\in J}$ is a (non-empty) family in $\PSH^{\NA}(X,\theta)$ with $\sup_{i\in I,j\in J} \Gamma^{i,j}_{\max}<\infty$. Then
\[
\sups_{i\in I}\sups_{j\in J}\Gamma^{i,j}=\sups_{j\in J}\sups_{i\in I}\Gamma^{i,j}.
\]
It is compatible with the partial order: Suppose that $(\Gamma^i)_{i\in I},(\Gamma'^i)_{i\in I}$ are two families in $\PSH^{\NA}(X,\theta)$ with $\sups_{i\in I}\Gamma^i_{\max}<\infty$ and $\sups_{i\in I}\Gamma'^i_{\max}<\infty$ and $\Gamma^i\leq \Gamma'^i$ for each $i\in I$, then 
\[
\sups_{i\in I}\Gamma^i\leq \sups_{i\in I}\Gamma'^i.
\]
It is compatible with addition by a constant: If $C\in \mathbb{R}$, then
\[
\sups_{i\in I}(\Gamma^i+C)=\left(\sups_{i\in I}\Gamma^i\right)+C.
\]
There is no non-trivial compatibility with the decreasing limits, as expected.

\subsection{Rescaling and addition}
\begin{definition}
    Let $\Gamma\in \PSH^{\NA}(X,\theta)$ and $\lambda\in \mathbb{R}_{>0}$, we define the rescaling $\lambda\Gamma\in \PSH^{\NA}(X,\lambda\theta)$ as the element with components $(\lambda\Gamma)^{\lambda\theta+\omega}\colon (-\infty,\lambda \Gamma_{\max})\rightarrow \PSH(X,\lambda\theta+\omega)$:
    \[
    (\lambda\Gamma)^{\lambda\theta+\omega}_{\tau}\coloneqq \lambda \Gamma^{\theta+\lambda^{-1}\omega}_{\lambda^{-1}\tau}
    \]
for any K\"ahler form $\omega$ on $X$.
\end{definition}

\begin{definition}
    Let $\Gamma\in \PSH^{\NA}(X,\theta)_{>0}$ and $\Gamma'\in \PSH^{\NA}(X,\theta')_{>0}$. We define $\Gamma+\Gamma'\colon (-\infty,\Gamma_{\max}+ \Gamma'_{\max})\rightarrow \PSH(X,\theta+\theta')$ as 
    \[
    (\Gamma+\Gamma')_{\tau}\coloneqq P_{\theta+\theta'}\left[\sup_{t\in \mathbb{R}} \left(\Gamma_t+\Gamma'_{\tau-t}\right)\right].
    \]
\end{definition}
Observe that $\tau\mapsto \sup_{t\in \mathbb{R}} (\Gamma_t+\Gamma'_{\tau-t})$ is concave. In fact, this curve is just the concave involution of $\Gamma$ and $\Gamma'$. From the general properties of infimal involution, we know that this map is concave and usc. For each fixed $x\in X$, $\tau\mapsto \sup_{t\in \mathbb{R}} (\Gamma_t(x)+\Gamma'_{\tau-t}(x))$ is either constantly $-\infty$ or proper. In particular, by \cite[Theorem~16.4]{Roc70} and Legendre--Fenchel duality, $\tau\mapsto \sup_{t\in \mathbb{R}} (\Gamma_t+\Gamma'_{\tau-t})$ is the same as the inverse Legendre transform of $\ell+\ell'$, where $\ell$ and $\ell'$ are the geodesic rays defined by $\Gamma$ and $\Gamma'$. In particular, $\sup_{t\in \mathbb{R}} (\Gamma_t+\Gamma'_{\tau-t})$ is either in $\PSH(X,\theta)$ or constantly $-\infty$ by Kiselman's principle. By \cref{lma:supIbon} and \cite[Proposition~2.8]{Xia22}, we find that $\Gamma+\Gamma'\in \PSH^{\NA}(X,\theta)_{>0}$.

This argument also shows that for any $c\ord_E\in X^{\Div}$, we have
\begin{equation}\label{eq:plusan1}
(\Gamma+\Gamma')^{\An}(c\ord_E)=\Gamma^{\An}(c\ord_E)+\Gamma'^{\An}(c\ord_E).
\end{equation}

\begin{definition}
    Let $\Gamma\in \PSH^{\NA}(X,\theta)$ and $\Gamma'\in \PSH^{\NA}(X,\theta')$. We define $\Gamma+\Gamma'\in  \PSH^{\NA}(X,\theta+\theta')$ as the element with components 
    \[
    (\Gamma+\Gamma')^{\theta+\omega}=\Gamma^{\theta+\omega}+\Gamma'^{\theta+\omega}.
    \]
\end{definition}
Note that $\Gamma+\Gamma'\in \PSH^{\NA}(X,\theta+\theta')$ by \eqref{eq:plusan1}.

\begin{lemma}
    Suppose that $\Gamma\in \PSH^{\NA}(X,\theta)$, $\Gamma'\in \PSH^{\NA}(X,\theta')$, $\lambda\in \mathbb{R}_{>0}$. Then for any $c\ord_E\in X^{\Div}$, we have
    \[
    (\lambda\Gamma)^{\An}(c\ord_E)=\lambda \Gamma^{\An}(c\ord_E),\quad (\Gamma+\Gamma')^{\An}(c\ord_E)=\Gamma^{\An}(c\ord_E)+\Gamma'^{\An}(c\ord_E).
    \]
    Moreover, 
    \[
    (\lambda \Gamma)_{\max}=\lambda \Gamma_{\max},\quad (\Gamma+\Gamma')_{\max}=\Gamma_{\max}+\Gamma'_{\max}.
    \]
\end{lemma}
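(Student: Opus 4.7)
Both equalities concerning $(\bullet)_{\max}$ are immediate from the definitions: $\lambda\Gamma$ is declared on the interval $(-\infty,\lambda\Gamma_{\max})$ and $\Gamma+\Gamma'$ on $(-\infty,\Gamma_{\max}+\Gamma'_{\max})$, so there is nothing to prove. The plan therefore focuses on the two identities involving the associated non-Archimedean potentials.

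For the rescaling identity, the strategy is to unfold \cref{def:napotentialdaleth} with an arbitrary K\"ahler form $\omega$. Using the positive homogeneity of the generic Lelong number together with the definitional equality $(\lambda\Gamma)^{\lambda\theta+\omega}_\tau=\lambda\Gamma^{\theta+\lambda^{-1}\omega}_{\lambda^{-1}\tau}$, I would obtain
\[
(\lambda\Gamma)^{\lambda\theta+\omega,\An}_{\tau}(c\ord_E)=\lambda\,\Gamma^{\theta+\lambda^{-1}\omega,\An}_{\lambda^{-1}\tau}(c\ord_E),
\]
after which the change of variables $s=\lambda^{-1}\tau$ in the supremum over $\tau<\lambda\Gamma_{\max}$ produces $(\lambda\Gamma)^{\An}(c\ord_E)=\lambda\,\Gamma^{\An}(c\ord_E)$. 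This step is purely algebraic.

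For the addition identity, the substantive content has essentially already been extracted: \eqref{eq:plusan1} establishes the desired formula under the positive-mass assumption. The plan is to bootstrap it through the projective limit \eqref{eq:PSHNAdef}. Concretely, I would invoke \cref{lma:canonicalident} to realise $\Gamma$ and $\Gamma'$ as compatible families of positive-mass components indexed by semipositive $(1,1)$-forms, after which the componentwise definition of $\Gamma+\Gamma'$ reduces the claim to applying \eqref{eq:plusan1} at each level. The result should then drop out since $\Gamma\mapsto\Gamma^{\An}$ is by construction independent of the chosen component.

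The step I expect to need the most care is precisely the last remark, namely that the value of $\Gamma^{\An}(c\ord_E)$ really is insensitive to the choice of positive-mass representative in the projective system. To nail this down I would restrict to a cofinal sequence $\omega/k\downarrow 0$ and invoke \cref{lma:Imodeldeclelong}(ii), which permits passing generic Lelong numbers through a decreasing limit as $k\to\infty$. This is exactly the compatibility needed in order to interchange the supremum over $\tau$ defining $\bullet^{\An}$ with the limit in $\omega$; once it is in place, the additivity identity follows from the positive-mass case with no further input.
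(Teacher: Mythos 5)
The paper omits the proof of this lemma entirely—it falls under the remark after the addition-by-a-constant proposition that such claims "follow trivially from \eqref{eq:dalethandiv}" and will not be spelled out—so your unfolding of the definitions is exactly the intended argument, and it is correct. Your rescaling computation (homogeneity of Lelong numbers plus the change of variables $s=\lambda^{-1}\tau$) and your reduction of the additivity formula to the positive-mass case \eqref{eq:plusan1} via the componentwise definition are both sound; the only minor inefficiency is that your final paragraph re-derives the $\omega$-independence of $\Gamma^{\An}$ via \cref{lma:Imodeldeclelong}, whereas this is already asserted (and follows directly from the fact that the transition maps are $\mathcal{I}$-envelopes, hence Lelong-number-preserving) in \cref{def:napotentialdaleth} itself.
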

Both operations are compatible with themselves: If $\lambda,\lambda'\in \mathbb{R}_{>0}$ and $\Gamma\in \PSH^{\NA}(X,\theta)$, $\Gamma'\in \PSH^{\NA}(X,\theta')$ and $\Gamma''\in \PSH^{\NA}(X,\theta'')$, then
\[
\lambda(\lambda'\Gamma)=(\lambda\lambda')\Gamma
\]
and
\[
(\Gamma+\Gamma')+\Gamma''=\Gamma+(\Gamma'+\Gamma'').
\]
The addition is clear commutative. Moreover, these two operations are compatible with each other:
\[
\lambda(\Gamma+\Gamma')=\lambda\Gamma+\lambda\Gamma',\quad (\lambda+\lambda')\Gamma=\lambda\Gamma+\lambda'\Gamma.
\]
Both operations are compatible with the partial order: If $\Gamma,\Gamma'\in \PSH^{\NA}(X,\theta)$, $\lambda\in \mathbb{R}_{>0}$, then $\Gamma\leq \Gamma'$ if and only if $\lambda\Gamma\leq \lambda\Gamma'$. Suppose that $\Gamma^1,\Gamma^2\in \PSH^{\NA}(X,\theta)$, $\Gamma'^1,\Gamma'^2\in \PSH^{\NA}(X,\theta')$ and $\Gamma^1\leq \Gamma^2$, $\Gamma'^1\leq \Gamma'^2$, then $\Gamma^1+\Gamma'^1\leq \Gamma^2+\Gamma'^2$.

Both operations are compatible with addition by a constant: If $C\in \mathbb{R}$, $\lambda\in \mathbb{R}_{>0}$, $\Gamma\in \PSH^{\NA}(X,\theta)$ and $\Gamma'\in \PSH^{\NA}(X,\theta')$, then
\[
\lambda(\Gamma+C)=\lambda \Gamma+\lambda C
\]
and
\[
(\Gamma+\Gamma')+C=\Gamma+(\Gamma'+C).
\]
Both operations satisfy obvious inequalities with respect to the limit operations. We leave the details to the readers.

\subsection{The trace operator}\label{subsec:traceop}
We assume in addition that $X$ is smooth. Recall that in this case, we have a natural bijection \eqref{eq:comparison}. Fix an irreducible reduced closed analytic subspace $E$ of $X$. 

\begin{definition}
Assume that $X$ is smooth.
    We say $\Gamma\in \PSH^{\NA}(X,\theta)$ has \emph{non-trivial restriction} to $E$ if there is a small enough $C\in \mathbb{R}$ so that $\nu(\Gamma^{\theta+\omega}_{C},E)=0$ for any K\"ahler form $\omega$ on $X$. 
\end{definition}

This definition is motivated by the following observation.
\begin{lemma}\label{lma:non-trivrest}
    Assume that $X$ is smooth and projective.
    Let $\Gamma\in \PSH^{\NA}(X,\theta)$. Then the following are equivalent:
    \begin{enumerate}
    \item $\Gamma^{\An}(v_{E,\triv})\neq -\infty$;
    \item $\Gamma^{\An}|_{E^{\An}}\not\equiv -\infty$;
    \item $\Gamma$ has non-trivial restriction to $E$.
\end{enumerate}
\end{lemma}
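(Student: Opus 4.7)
The plan is to establish (1) $\Rightarrow$ (2), (2) $\Rightarrow$ (1), and (1) $\Leftrightarrow$ (3) separately. The implication (1) $\Rightarrow$ (2) is trivial since $v_{E,\triv}\in E^{\An}$.

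For (2) $\Rightarrow$ (1), I would establish an elementary maximum-principle-type fact at $v_{E,\triv}$: for any coherent ideal sheaf $\mathfrak{a}$ on $X$ and any $v\in E^{\An}$, the finiteness $v(\mathfrak{a})<+\infty$ forces $v_{E,\triv}(\mathfrak{a})=0$. Indeed, any $f\in\mathfrak{a}_{c(v)}$ with $v(f)<\infty$ has $f|_E\neq 0$ in $\mathcal{O}_{E,c(v)}$, and since $E$ is integral the further localization $\mathcal{O}_{E,c(v)}\hookrightarrow\mathbb{C}(E)=\mathcal{O}_{E,\eta_E}$ is injective, so $\mathfrak{a}_{\eta_E}\not\subset\mathfrak{m}_{X,\eta_E}$, i.e.\ $v_{E,\triv}(\mathfrak{a})=0$. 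Applying this to $\mathfrak{a}=\mathcal{I}(k\Gamma^{\theta+\omega}_\tau)$ for every $k$ and using $\varphi^{\An}(v)=-\lim_k k^{-1}v(\mathcal{I}(k\varphi))$, we obtain the key implication: $\Gamma^{\theta+\omega,\An}_\tau(v)>-\infty$ forces $\Gamma^{\theta+\omega,\An}_\tau(v_{E,\triv})=0$. Substituted into the sup formula defining $\Gamma^{\An}$, condition (2) then produces some $\tau<\Gamma_{\max}$ with $\Gamma^{\An}(v_{E,\triv})\geq\tau>-\infty$.

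For (1) $\Leftrightarrow$ (3), the same ideal-theoretic observation combined with Skoda's integrability theorem yields the key equivalence
\[
\varphi^{\An}(v_{E,\triv})\neq-\infty\iff\nu(\varphi,E)=0,
\]
valid for any $\varphi\in\PSH(X,\theta+\omega)$, with $\varphi^{\An}(v_{E,\triv})\in\{0,-\infty\}$. The forward implication uses Skoda: if $\nu(\varphi,E)<1/k$, Siu's semicontinuity forces $\nu(\varphi,x)=\nu(\varphi,E)<1/k$ at generic $x\in E$, whence $\mathcal{I}(k\varphi)_{\eta_E}=\mathcal{O}_{X,\eta_E}$. The reverse uses the pointwise bound $\varphi\leq\nu(\varphi,E)\log|z_E|+O(1)$ near a generic smooth point of $E$ to force $\mathcal{I}(k\varphi)_{\eta_E}\subset\mathfrak{m}_{X,\eta_E}$ whenever $k\nu(\varphi,E)>1$. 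Substituting this into the sup formula for $\Gamma^{\An}(v_{E,\triv})$ and using that $\tau\mapsto\nu(\Gamma^{\theta+\omega}_\tau,E)$ is increasing (since $\tau\mapsto\Gamma^{\theta+\omega}_\tau$ is decreasing), one sees that (1) is equivalent to the existence of some $C<\Gamma_{\max}$ with $\nu(\Gamma^{\theta+\omega}_C,E)=0$. The independence of $\omega$ demanded by (3) is then automatic, since the transition map $P_{\theta+\omega'}[\cdot]_{\mathcal{I}}$ preserves multiplier ideals tautologically, and hence generic Lelong numbers.

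The main obstacle is the reverse direction of the multiplier-ideal/Lelong number equivalence above: starting from $\nu(\varphi,E)>0$, one has to show the multiplier ideals $\mathcal{I}(k\varphi)$ eventually become contained in $\mathcal{I}_E$ at $\eta_E$. This reduces to a local integrability estimate near a generic smooth point of $E$, using the pointwise bound $\varphi\leq\nu(\varphi,E)\log|z_E|+O(1)$ together with the elementary observation that $\int|z_E|^{2(a-k\nu(\varphi,E))}\,dV=+\infty$ whenever $a\leq k\nu(\varphi,E)-1$.
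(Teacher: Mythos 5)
Your proof is correct and follows the same structural decomposition as the paper's argument: (1) $\Leftrightarrow$ (2) is a maximum-principle statement, and (1) $\Leftrightarrow$ (3) rests on the dichotomy $\varphi^{\An}(v_{E,\triv})\in\{0,-\infty\}$ governed by whether $\nu(\varphi,E)$ vanishes. The difference is that the paper simply cites \cite[Lemma~1.4(i)]{BJ18b} for the first equivalence and states the dichotomy without proof, whereas you supply a direct argument for each. Your ideal-theoretic observation (that $v(\mathfrak{a})<\infty$ for $v\in E^{\An}$ forces $v_{E,\triv}(\mathfrak{a})=0$, because any $f\in\mathfrak{a}_{c(v)}$ with $v(f)<\infty$ is nonzero on the support of $v$ and hence on $E$ by integrality) is exactly the mechanism behind the cited maximum principle, and feeding it through the multiplier-ideal formula $\varphi^{\An}(v)=-\lim_k k^{-1}v(\mathcal{I}(k\varphi))$ — noting that the limit is a supremum by DEL subadditivity, so finiteness of the limit forces $v(\mathcal{I}(k\varphi))<\infty$ for every $k$ — correctly yields (2) $\Rightarrow$ (1). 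Your proof of the dichotomy via Skoda in one direction and the local integrability estimate $\int|z_E|^{2(a-k\nu(\varphi,E))}\,dV=\infty$ for $a\leq k\nu(\varphi,E)-1$ in the other is the standard derivation and is sound; the pointwise bound $\varphi\leq\nu(\varphi,E)\log|z_E|+O(1)$ near a generic smooth point of $E$ follows from the Siu decomposition of $\theta+\ddc\varphi$.

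One small point of phrasing: calling the preservation of multiplier ideals by $P_{\theta+\omega'}[\cdot]_{\mathcal{I}}$ ``tautological'' overstates it slightly. That the $\mathcal{I}$-envelope $P_{\theta+\omega'}[\varphi]_{\mathcal{I}}$ itself has the same multiplier ideals as $\varphi$ requires knowing that the $\mathcal{I}$-equivalence class is stable under taking the usc supremum; this is a genuine (if standard) theorem rather than a tautology, and in this paper it is effectively part of the framework from \cite{DX22}. The conclusion you draw from it — that $\nu(\Gamma^{\theta+\omega}_\tau,E)$ does not depend on $\omega$, so the ``for all $\omega$'' in the definition of non-trivial restriction is automatic — is correct. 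Overall your argument is a more self-contained version of the paper's proof, filling in the two cited ingredients from first principles.
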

Here $v_{E,\triv}$ denotes the trivial valuation of $\mathbb{C}(E)$.

\begin{proof}
The equivalence between (1) and (2) is a simple consequence of the maximum principle \cite[Lemma~1.4(i)]{BJ18b}. 

To see the equivalence between (1) and (3), it suffices to observe that for any $\varphi\in \PSH(X,\theta)$,
\[
\varphi^{\An}(v_{E,\triv})=
\left\{
\begin{aligned}
    -\infty, & \text{ if }\nu(\varphi,E)>0;\\
    0,& \text{ if }\nu(\varphi,E)=0.
\end{aligned}
\right.
\]
\end{proof}

\begin{definition}
Assume that $X$ is smooth.
    Assume that $\Gamma\in \PSH^{\NA}(X,\theta)$ has non-trivial restriction to $E$, we define the \emph{trace operator} of $\Gamma$ along $E$ as the element $\Tr_E(\Gamma)\in \PSH^{\NA}(\tilde{E},\theta|_{\tilde{E}})$ defined as follows: For any K\"ahler form $\omega'$ on $Y$ and any K\"ahler form $\omega$ on $X$ satisfying $\omega'\geq \omega|_{\tilde{Y}}$, the component $\Tr_E(\Gamma)^{\theta|_{\tilde{E}}+\omega'}\in \PSH^{\NA'}(\tilde{E},\theta|_{\tilde{E}}+\omega')$ is given by
    \[
     \tau\mapsto P_{\theta|_{\tilde{E}}+\omega'}[\Tr_E(\Gamma^{\theta+\omega}_{\tau})],
    \]
    where $\tau\in (-\infty,(\Tr_E(\Gamma))_{\max})$ and $(\Tr_E(\Gamma))_{\max}=\sup \{\tau\in \mathbb{R}:\nu(\Gamma^{\theta+\omega''}_{\tau},E)=0\}$ for any K\"ahler form $\omega''$ on $X$.
\end{definition}
We remind the readers that $\tilde{E}$ is the normalization of $E$. It is a normal K\"ahler space by \cite[Proposition~3.5]{GK20}.

The non-Archimedean potential associated with $\Tr_E(\Gamma)$ is not very easy to describe. We will only do this in the following special but important case.
\begin{theorem}\label{thm:comp_traceop} Assume that $X$ is smooth projective and $\{\theta\}$ is the first Chern class of a pseudoeffective line bundle $L$.
    Consider an element $\Gamma\in \PSH^{\NA}(X,\theta)$ with non-trivial restriction to $E$. Then 
    \begin{equation}\label{eq:Tracena}
    \Tr_E(\Gamma)^{\An}|_{E^{\Div}}=\Gamma^{\An}|_{E^{\Div}}.
    \end{equation}
\end{theorem}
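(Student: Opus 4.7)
The plan is to reduce the statement to a pointwise identity for single $\mathcal{I}$-good potentials, verify this identity directly when the potential has analytic singularities, and then extend to arbitrary potentials via Demailly's quasi-equisingular approximations combined with the continuity of $\Tr_E$ along $d_S$-convergent sequences established in \cref{prop:tracedecseq}. Fix a K\"ahler form $\omega$ on $X$, a K\"ahler form $\omega'$ on $\tilde E$ with $\omega'\geq \omega|_{\tilde E}$, and a divisorial valuation $v = c\ord_F \in E^{\Div}$. Unwinding the definitions of $\Gamma^{\An}$ and of $\Tr_E(\Gamma)$, and using that the operator $P[\cdot]$ preserves generic Lelong numbers (so it does not change the non-Archimedean potential at divisorial valuations), \eqref{eq:Tracena} reduces to the $\tau$-wise identity
\[
\Tr_E(\varphi)^{\An}(v) = \varphi^{\An}(v)
\]
for every $\mathcal{I}$-good $\varphi \in \PSH(X,\theta+\omega)$ with $\nu(\varphi,E)=0$. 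For $\tau \in [(\Tr_E\Gamma)_{\max},\Gamma_{\max})$, where the trace is undefined, one has $\nu(\Gamma^{\theta+\omega}_{\tau},E)>0$, hence $\mathcal{I}(k\Gamma^{\theta+\omega}_{\tau})\subseteq \mathcal{I}_E$ for $k\gg 0$, so $v$ sends this ideal to $+\infty$ and $\Gamma^{\theta+\omega,\An}_{\tau}(v) = -\infty$; these $\tau$ contribute nothing to the supremum defining $\Gamma^{\An}(v)$, so the ranges of $\tau$ on the two sides match up.

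First suppose that $\varphi$ has analytic singularities described by a coherent ideal $\mathfrak{a}\subseteq \mathcal{O}_X$ with exponent $c>0$. The assumption $\nu(\varphi,E)=0$ forces $\mathcal{I}_E\not\subseteq \mathfrak{a}$ locally along $E$, so $\varphi|_{\tilde E}$ has analytic singularities along $\mathfrak{a}\cdot \mathcal{O}_{\tilde E}$ with the same exponent, and by definition $\Tr_E(\varphi) = \varphi|_{\tilde E}$. The standard formula for the non-Archimedean potential attached to an analytic singularity gives
\[
\varphi^{\An}(v) = -c\cdot v(\mathfrak{a}), \qquad (\varphi|_{\tilde E})^{\An}(v) = -c\cdot v(\mathfrak{a}\cdot \mathcal{O}_{\tilde E}).
\]
The equality $v(\mathfrak{a}) = v(\mathfrak{a}\cdot \mathcal{O}_{\tilde E})$ then follows directly from \eqref{eq:va}: the semivaluation $v$ is supported on $E$, so it vanishes on $\mathcal{I}_E$, and the minimum over $\mathfrak{a}_{c(v)}$ is unchanged after passing to the quotient $\mathcal{O}_{\tilde E, c(v)}$.

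For general $\mathcal{I}$-good $\varphi$, take a quasi-equisingular approximation $(\varphi_j)_j$ of $\varphi$ in $\PSH(X,\theta+\omega)$: a decreasing sequence of potentials with analytic singularities whose multiplier ideals stabilize, $\mathcal{I}(k\varphi_j)\to \mathcal{I}(k\varphi)$ for each fixed $k$. The stabilization yields $\varphi_j^{\An}(v)\to \varphi^{\An}(v)$ directly from the formula $\varphi^{\An}(v) = -\lim_k k^{-1}v(\mathcal{I}(k\varphi))$. On the other hand, $\Tr_E(\varphi_j) = \varphi_j|_{\tilde E}$, and \cref{prop:tracedecseq} gives $\Tr_E(\varphi_j)\xrightarrow{d_S}\Tr_E(\varphi)$, which via the generic-Lelong-number characterization of the non-Archimedean potential on the unibranch K\"ahler space $\tilde E$ (together with \cite[Theorem~0.10]{Xia22}) yields $\Tr_E(\varphi_j)^{\An}(v)\to \Tr_E(\varphi)^{\An}(v)$. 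Combining these with the analytic-singularities identity applied to each $\varphi_j$ gives
\[
\varphi^{\An}(v) = \lim_j \varphi_j^{\An}(v) = \lim_j (\varphi_j|_{\tilde E})^{\An}(v) = \lim_j \Tr_E(\varphi_j)^{\An}(v) = \Tr_E(\varphi)^{\An}(v),
\]
which establishes the reduced claim and hence \eqref{eq:Tracena}.

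The main technical subtlety is the convergence $\varphi_j^{\An}(v)\to \varphi^{\An}(v)$ at the semivaluation $v$, whose support is the proper subvariety $E\subsetneq X$. On divisorial valuations of $X$ itself such convergence is routine from $d_S$-continuity of generic Lelong numbers, but for semivaluations centered on the smaller subvariety $E$ the argument must go through the multiplier-ideal stabilization that is part of the definition of quasi-equisingular approximations; this is really the substantive input from Demailly regularization in the algebraic setting, and it is precisely why we need to restrict to the case where $\{\theta\}$ is the first Chern class of a line bundle. Once this continuity is granted, everything else is formal from the analytic-singularities identity and \cref{prop:tracedecseq}.
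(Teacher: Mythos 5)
Your proof is correct, as far as I can tell, but it takes a genuinely different route from the paper's. The paper approximates $\Gamma^{\An}$ from above by rational generically finite Fubini--Study functions on perturbed line bundles $L + i^{-1}S$ using \cite[Corollary~12.18]{BJ18b}, passes through \cref{lma:decnetna} and \cite[Lemma~3.13]{DXZ23} to reduce to the case $\phi\in\mathcal{H}^{\gf}_{\mathbb{Q}}(L)$ with $L$ big, and then works with the explicit piecewise-linear test curve built from potentials of the form $m^{-1}\log\max_i(|s_i|^2_{h_0^m}+c)$ as in the proof of \cite[Theorem~3.12]{DXZ23}; the final computation reduces to a single section not vanishing identically on $E$. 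Your proof instead performs a $\tau$-wise reduction to the identity $\Tr_E(\varphi)^{\An}(v) = \varphi^{\An}(v)$ for $\mathcal{I}$-good $\varphi$, verifies this directly via coherent ideals when $\varphi$ has analytic singularities, and then passes to the general case using a quasi-equisingular approximation $(\varphi_j)$, which is essentially unwinding the definition of $\Tr_E$. The two nontrivial ingredients in your approach are the convergence $\varphi_j^{\An}(v)\to\varphi^{\An}(v)$ at a semivaluation $v$ centered on $E$ — which you correctly pin on the multiplier-ideal stabilization $\mathcal{I}(m(1+\delta)\varphi_j)\subseteq\mathcal{I}(m\varphi)$ for $j\gg0$, combined with $\varphi^{\An}(v)=\inf_m(-m^{-1}v(\mathcal{I}(m\varphi)))$ from subadditivity — and the $d_S$-continuity of $\Tr_E$ from \cref{prop:tracedecseq} plus \cite[Theorem~0.10]{Xia22} on the other side. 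Both ingredients hold, so the argument is sound.

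The main structural advantage of your route is that it stays in the analytic world and sidesteps both the Boucksom--Jonsson FS-approximation machinery and the piecewise-linear test-curve structure entirely, making it arguably a more direct reflection of the definition of the trace operator. The paper's route, in exchange, keeps the argument at the level of whole test curves (not $\tau$-wise), which meshes with the intermediate reduction lemmas already established and with the overall Legendre-duality viewpoint. One small omission to be aware of in a write-up: you invoke \cref{prop:tracedecseq} for the quasi-equisingular approximation $\varphi_j\searrow\varphi$, which requires (i) each $\varphi_j$ to be $\mathcal{I}$-good, (ii) $\varphi_j\xrightarrow{d_S}\varphi$, and (iii) $\Gamma^{\theta+\omega}_\tau$ itself to be $\mathcal{I}$-good. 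Points (i) and (iii) hold (analytic singularities are $\mathcal{I}$-good, and an $\mathcal{I}$-model potential with positive mass is $\mathcal{I}$-good since $\varphi\le P_\theta[\varphi]\le P_\theta[\varphi]_{\mathcal{I}}=\varphi$), and (ii) follows from positive limit mass as in \cite[Proposition~4.8]{DDNLmetric}, but these should be stated explicitly rather than left implicit. Similarly, you should note that $P_{\theta|_{\tilde E}+\omega'}[\cdot]$ preserving divisorial Lelong numbers is a separate (standard) fact that deserves a citation.
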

Observe that there is a canonical identification $E^{\Div}=\tilde{E}^{\Div}$. 

In order to prove the theorem, we need to recall the following notion introduced in \cite{DXZ23}. 
\begin{definition}
    A \emph{piecewise linear curve} $\psi_{\bullet}$ in $\PSH(X,\theta)$ associated with $\psi_{\tau_j} \in \PSH(X,\theta)$, for a finite number of parameters $\tau_0>\tau_1> \dots > \tau_N$ is the affine interpolation of these data:
    \begin{enumerate}
        \item $\psi_{\tau}=\psi_{\tau_N}$ for $\tau\leq \tau_N$;
        \item For $t\in (0,1)$ and $i=0,\ldots,N-1$, we have $\psi_{(1-t)\tau_i+t\tau_{i+1}}=(1-t)\psi_{\tau_i}+t\psi_{\tau_{i+1}}$;
        \item $\psi_{\tau}=-\infty$ for $\tau>\tau_0$.
    \end{enumerate}
\end{definition}
The analytification $\psi^\An$ of a piecewise linear curve $\psi_{\bullet}$ as above is defined as
\begin{equation}\label{eq:psianvlinear}
\psi^{\An}(v)\coloneqq \sup_{\tau\leq \tau_0} (\psi_{\tau}^{\An}(v)+\tau)=\max_{\tau_i} (\psi_{\tau_i}^{\An}(v)+\tau_i)\quad \text{for }v\in X^{\An}.
\end{equation}

\begin{proof}[Proof of \cref{thm:comp_traceop}]
    We may assume that $\Gamma\in \PSH^{\NA}(X,\theta)_{>0}$.
    Let $\phi=\Gamma^{\An}\in \PSH(L^{\An})$. By \cref{lma:non-trivrest}, $\phi(v_{E,\triv})\neq -\infty$. 
    
    Take an ample line bundle $S$ on $X$, a K\"ahler form $\omega$ in $c_1(S)$.
    Write $\phi$ as the decreasing limit of a sequence $\phi^{i}$ of elements in $\mathcal{H}^{\gf}_{\mathbb{Q}}(L+i^{-1}S)$. This is possible by \cite[Corollary~12.18]{BJ18b}.
    
    Take $\Gamma^{i}\in \PSH^{\NA}(X,\theta+i^{-1}\omega)$ such that $\Gamma^{i,\An}=\phi^{i}$. Note that by \cref{lma:positivemasspart}, $\Gamma^i\in \PSH^{\NA}(X,\theta+i^{-1}\omega)_{>0}$.

    It follows from \cref{lma:decnetna} (applied to the images of $\Gamma^i$ in $\PSH^{\NA}(X,\theta+\omega)$) that for any $\tau<\Gamma_{\max}$, we have
    \[
    \lim_{i\to\infty}\Gamma^{i}_{\tau}=\Gamma_{\tau}.
    \]
    In particular, by \cref{lma:Imodeldeclelong}, the Lelong numbers of $\Gamma^{i}_{\tau}$ converge to those of $\Gamma_{\tau}$. It follows that $\Gamma^{i}_{\tau}\xrightarrow{d_{S,\theta+\omega}} \Gamma_{\tau}$ for all $\tau<\Gamma_{\max}$.
    
    By \cref{lma:non-trivrest} again, each $\Gamma^{i}$ has non-trivial restriction to $E$. By \cite[Lemma~3.12]{Xia23}, for any K\"ahler form $\omega'$ on $\tilde{E}$ satisfying $\omega'\geq \omega|_{\tilde{E}}$ we have
    \[
    \Tr_E(\Gamma^{i})^{\theta|_{\tilde{E}}+\omega'}_{\tau}\xrightarrow{d_S} \Tr_E(\Gamma)^{\theta|_{\tilde{E}}+\omega'}_{\tau}
    \]
    for any $\tau<(\Tr_E(\Gamma))_{\max}$. Thanks to \cite[Theorem~0.10]{Xia22}, the Lelong numbers of $\Tr_E(\Gamma^{i})^{\theta|_{\tilde{E}}+\omega'}_{\tau}$ converge to those of $\Tr_E(\Gamma)^{\theta|_{\tilde{E}}+\omega'}_{\tau}$. Therefore, by \cite[Lemma~3.13]{DXZ23},
    \[
    \Tr_E(\Gamma)^{\An}(c\ord_F)=\inf_{i}\Tr_E(\Gamma)^{i,\An}(c\ord_F)
    \]
    for any $c\ord_F\in E^{\Div}$.
    In particular, it suffices to prove \eqref{eq:Tracena} with $\Gamma^i$ in place of $\Gamma$.
    
    In other words, we have reduced to the case where $\phi\in \mathcal{H}^{\gf}_{\mathbb{Q}}(L)$ and $L$ is big.

    Under this extra assumption, as proved in the proof of \cite[Theorem~3.12]{DXZ23}, there is a piecewise linear curve $\psi$ connecting potentials lying in the space of potentials generated by functions of the form
    \begin{equation}\label{eq:breakingpoints}
    m^{-1}\log \max_i(|s_i|_{h_0^m}^2+c)
    \end{equation}
    using convex combinations and finite maxima, 
    where $m\in \mathbb{Z}_{>0}$, the $s_i$'s are finitely many holomorphic sections of $L^{\otimes m}$ and $c\in \mathbb{Q}$ such that $\Gamma_{\tau}$ is the $\mathcal{I}$-envelope of $\eta_{\tau}$ for any $\tau<\Gamma_{\max}$, where $\eta$ is the concave envelope of $\psi$. Moreover, after adjusting $\psi$ (c.f. \cite[Theorem~5.6]{Roc70}), we may guarantee that $\psi$ itself is concave. In particular, $\Gamma_{\tau}\sim_{\mathcal{I}}\psi_{\tau}$ for all $\tau< \Gamma_{\max}$.

    It follows that for any $c\ord_F\in E^{\Div}$,
    \[
    \begin{aligned}
    \phi|_{E^{\An}}(c\ord_F)=&\sup_{\tau<\Gamma_{\max}} \left(\psi_{\tau}^{\An}(c\ord_F)+\tau\right)\\
    =& \sup_{\tau<\Gamma_{\max}} \left(\left(\psi_{\tau}|_{\tilde{E}}\right)^{\An}(c\ord_F)+\tau\right)\\
    =& \sup_{\tau<\Gamma_{\max}} \left(\Tr_E(\psi_{\tau})^{\An}(c\ord_F)+\tau\right)\\
     =& \sup_{\tau<\Gamma_{\max}} \left(\Tr_E(\Gamma_{\tau})^{\An}(c\ord_F)+\tau\right)\\
    =& \Tr_E(\Gamma)^{\An}(c\ord_F).
    \end{aligned}
    \]
    Here we adopt the convention that $(-\infty)^{\An}(c\ord_F)=-\infty$ to simplify the notations.
    The third equality follows from the linearity of the trace operator \cite[Lemma~3.10]{Xia23} and the obvious fact that the trace operator preserves finite maxima. The fourth equality follows from \cite[Lemma~3.9]{Xia23}. 
    In order to justify the second equality, it suffices to show that for $\tau<(\Tr_E(\Gamma))_{\max}$, if $\psi_{\tau}$ has the form \eqref{eq:breakingpoints}, then
    \[
    \left(\psi_{\tau}|_{\tilde{E}}\right)^{\An}(\ord_F)=\psi_{\tau}^{\An}(\ord_F).
    \]
    We could easily reduce to the case $m=1$ and $\psi_{\tau}=\log|s|^2_{h_0}$ for a single holomorphic section $s$ of $L$ which does not vanish identically on $E$. In this case, the above equality is trivial.
\end{proof}

In particular, the trace operator is compatible with the partial order and the addition by a constant. 

As an interesting corollary, 
\begin{corollary}
    Assume the same assumptions as in \cref{thm:comp_traceop}. Suppose that $\phi\in \PSH(L^{\An})$ is a non-Archimedean metric represented by $\Gamma\in \PSH^{\NA}(X,\theta)$ and $E$ is an irreducible reduced subvariety of $X$. Then
    \[
    \phi(v_{E,\triv})=\sup\left\{\tau\in \mathbb{R}: \nu(\Gamma^{\theta+\omega}_{\tau},E)=0\right\}
    \]
    for any K\"ahler form $\omega$ on $X$.
\end{corollary}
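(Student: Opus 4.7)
The plan is to evaluate the defining formula for $\Gamma^{\An}$ directly at the semi-valuation $v_{E,\triv}\in X^{\An}$ and to exploit the explicit dichotomy $\varphi^{\An}(v_{E,\triv})\in\{0,-\infty\}$ recorded inside the proof of \cref{lma:non-trivrest}. No new analytic input is required beyond what \cref{thm:comp_traceop} itself uses; the corollary amounts to bookkeeping on the parameter $\tau$.

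First I would fix an arbitrary K\"ahler form $\omega$ on $X$ and unfold the comparison map \eqref{eq:comparison} at $v_{E,\triv}$:
\[
\phi(v_{E,\triv})=\Gamma^{\An}(v_{E,\triv})=\sup_{\tau<\Gamma_{\max}}\left(\Gamma^{\theta+\omega,\An}_{\tau}(v_{E,\triv})+\tau\right).
\]
By the pointwise formula recalled in the proof of \cref{lma:non-trivrest}, any $\psi\in \PSH(X,\theta+\omega)$ satisfies $\psi^{\An}(v_{E,\triv})=0$ when $\nu(\psi,E)=0$ and $\psi^{\An}(v_{E,\triv})=-\infty$ otherwise. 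Applying this with $\psi=\Gamma^{\theta+\omega}_{\tau}$, each summand above collapses to either $\tau$ or $-\infty$, yielding
\[
\phi(v_{E,\triv})=\sup\left\{\tau<\Gamma_{\max}:\nu(\Gamma^{\theta+\omega}_{\tau},E)=0\right\}.
\]

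Next I would reconcile this with the supremum over all $\tau\in \mathbb{R}$ in the statement. By the extension convention of \cref{rmk:extendtc}, $\Gamma^{\theta+\omega}_{\tau}=-\infty$ for $\tau>\Gamma_{\max}$, so such $\tau$ never satisfy the condition. Since $\tau\mapsto \Gamma^{\theta+\omega}_{\tau}$ is decreasing on $(-\infty,\Gamma_{\max})$, Siu's semicontinuity forces $\tau\mapsto \nu(\Gamma^{\theta+\omega}_{\tau},E)$ to be nondecreasing, so the set $\{\tau\in \mathbb{R}:\nu(\Gamma^{\theta+\omega}_{\tau},E)=0\}$ differs from the one appearing above by at most the single endpoint $\tau=\Gamma_{\max}$, which cannot change the supremum. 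The independence of $\omega$ is automatic because $\Gamma^{\An}$ is intrinsically attached to $\Gamma$.

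The only non-mechanical step is the dichotomy $\varphi^{\An}(v_{E,\triv})\in\{0,-\infty\}$, which is settled in the proof of \cref{lma:non-trivrest} via a short application of Skoda's integrability theorem to the multiplier ideals $\mathcal{I}(k\varphi)$ at the generic point of $E$. I therefore expect no genuine obstruction beyond carefully quoting that computation.
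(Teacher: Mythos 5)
Your proof is correct and is essentially the most direct route: unfold the definition $\Gamma^{\An}(v_{E,\triv})=\sup_{\tau<\Gamma_{\max}}\bigl(\Gamma^{\theta+\omega,\An}_{\tau}(v_{E,\triv})+\tau\bigr)$ from \eqref{eq:comparison}, then invoke the dichotomy $\varphi^{\An}(v_{E,\triv})\in\{0,-\infty\}$ according to whether $\nu(\varphi,E)$ vanishes, which is precisely what is used in the proof of \cref{lma:non-trivrest}. This collapses the supremum to $\sup\{\tau<\Gamma_{\max}:\nu(\Gamma^{\theta+\omega}_{\tau},E)=0\}$, and your endpoint discussion correctly identifies this with the supremum over all $\tau\in\mathbb{R}$.

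It is worth noting that your argument does not actually invoke \cref{thm:comp_traceop} at all: the right-hand side of the corollary is by definition $(\Tr_E(\Gamma))_{\max}$, and the paper frames the statement as a consequence of the trace-operator comparison (for instance by combining $\Tr_E(\Gamma)^{\An}|_{E^{\Div}}=\Gamma^{\An}|_{E^{\Div}}$ with the maximum principle on $\tilde{E}^{\An}$). Your derivation bypasses the trace operator entirely and works from first principles, which is a legitimate, arguably cleaner alternative; both routes terminate in the same identification of the two quantities with $(\Tr_E(\Gamma))_{\max}$.

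One small correction: the monotonicity of $\tau\mapsto\nu(\Gamma^{\theta+\omega}_{\tau},E)$ is not a consequence of Siu's semicontinuity theorem. It follows immediately from the fact that $\tau\mapsto\Gamma^{\theta+\omega}_{\tau}$ is decreasing together with the elementary monotonicity of generic Lelong numbers: $\varphi\geq\psi$ implies $\nu(\varphi,E)\leq\nu(\psi,E)$. Siu's theorem is only needed, as in \cref{def:napotentialdiv}, to make the generic Lelong number along $E$ well-defined independently of the resolution.
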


\subsection{Functoriality}\label{subsec:funct}

Let $\pi\colon Y\rightarrow X$ be a proper bimeromorphic morphism from a reduced irreducible unibranch compact K\"ahler space of dimension $n$. By Zariski's main theorem, the obvious pull-back map induces a bijection 
\[
\pi^*\colon \PSH(X,\theta)\cn \PSH(Y,\pi^*\theta).
\]
In particular, it induces a canonical bijection
\[
\pi^*:\PSH^{\NA}(X,\theta)_{>0}\cn \PSH^{\NA}(Y,\pi^*\theta)_{>0}
\]
as follows: The image of $\Gamma\in \PSH^{\NA}(X,\theta)_{>0}$ is given by 
\[
\pi^*\Gamma\colon (-\infty,\Gamma_{\max})\rightarrow \PSH(Y,\pi^*\theta),\quad 
(\pi^*\Gamma)_{\tau}\coloneqq \pi^*\Gamma_{\tau}.
\]
This induces a canonical bijection
\[
\pi^*\colon \PSH^{\NA}(X,\theta)\cn \PSH^{\NA}(Y,\pi^*\theta).
\]
The injectivity is trivial and the surjectivity follows from \cref{lma:canonicalident}.

Note that there is a canonical identification $Y^{\Div}\cn X^{\Div}$, so that we can identify $(\pi^*\Gamma)^{\An}$ and $\Gamma^{\An}$.

One trivially verifies that all the previous operations are functorial with respect to $\pi^*$.

\section{Comparison with Boucksom--Jonsson's theory}
Let $X$ be a reduced irreducible unibranch projective variety and $L$ be a pseudoeffective line bundle on $X$ with $\xi=c_1(L)$. Fix a smooth Hermitian metric $h_0$ on $L$ with $\theta=c_1(L,h_0)$.

Recall the following basic result:
\begin{theorem}[{\cite[Theorem~4.22]{BJ18b}}]\label{thm:pshdetbydiv}
Let $\phi,\psi \in\PSH(L^{\An})$. Assume that $\phi\leq \psi$ on $X^{\Div}$, then the same holds on $X^{\An}$.
\end{theorem}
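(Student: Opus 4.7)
The plan is to exploit two structural features of $\PSH(L^{\An})$: upper semicontinuity, inherited from the presentation of $\phi$ and $\psi$ as decreasing limits of (rational, generically finite) Fubini--Study functions $\phi_i\in \mathcal{H}^{\gf}_{\mathbb{Q}}(L_i)$, and monotonicity under the natural partial order on $X^{\An}$ for which $v\leq w$ means $v(f)\leq w(f)$ for every local section $f$. First I would verify this monotonicity at the level of $\mathcal{H}^{\gf}_{\mathbb{Q}}$: a function of the shape $m^{-1}\max_j (-v(s_j)+\lambda_j)$ is visibly decreasing in $v$ with respect to this order, and the property is inherited by arbitrary decreasing limits, so every element of $\PSH(L^{\An})$ is decreasing in $v$.

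The next ingredient is a monotone divisorial approximation from below: for any $v\in X^{\An}$ I want to produce a net $(v_\alpha)_\alpha\subseteq X^{\Div}$ with $v_\alpha\leq v$ pointwise on the structure sheaf and $v_\alpha\to v$ in the Berkovich topology, equivalently $v_\alpha(f)\to v(f)$ for every regular function $f$. This is the standard quasi-monomial approximation tied to snc log resolutions: given a resolution $\pi_\alpha\colon Y_\alpha\rightarrow X$ on which the center of $v$ sits in an snc divisor $\sum E_i$ with local equations $z_i$, take the quasi-monomial valuation with weights $a_i=v(z_i)$; it satisfies $v_\alpha\leq v$ by expanding any $f$ as a monomial series in the $z_i$, and one then approximates the weights by rationals from below to land in $X^{\Div}$. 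As the resolutions refine, these $v_\alpha$ converge to $v$.

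Combining the two inputs, for any $v\in X^{\An}$, the monotonicity of $\phi$ gives $\phi(v_\alpha)\geq \phi(v)$, whereas upper semicontinuity combined with $v_\alpha\to v$ yields $\limsup_\alpha \phi(v_\alpha)\leq \phi(v)$; hence $\phi(v)=\lim_\alpha \phi(v_\alpha)$, and the same identity holds for $\psi$. Together with the hypothesis $\phi(v_\alpha)\leq \psi(v_\alpha)$ on $X^{\Div}$, passing to the limit gives $\phi(v)\leq \psi(v)$, which is the desired conclusion.

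The main obstacle is the monotone divisorial approximation step. Density of $X^{\Div}$ in $X^{\An}$ is classical, but arranging $v_\alpha\to v$ simultaneously with the order constraint $v_\alpha\leq v$ is the delicate point: it rests on the quasi-monomial approximation theorem on snc models (ultimately relying on Zariski's subspace theorem) together with the fact that rational quasi-monomial valuations are divisorial, which allows rationalizing the weights from below while preserving the inequality.
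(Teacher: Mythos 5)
The overall skeleton you propose — upper semicontinuity and monotonicity of psh functions in the valuation order, combined with a monotone divisorial approximation from below — is indeed the right spirit, and resembles the Boucksom--Jonsson proof (which proceeds through retractions $p_{\mathcal{X}}$ onto dual complexes, these satisfying $p_{\mathcal{X}}(v)\leq v$ and $p_{\mathcal{X}}(v)\to v$, and then densifies by rational weights). However, your claimed divisorial approximation ``for any $v\in X^{\An}$, a net $(v_\alpha)\subseteq X^{\Div}$ with $v_\alpha\leq v$ and $v_\alpha\to v$'' is genuinely false at the bottom of the order. The trivial valuation $v_{\triv}$ is the unique minimum for the order $v\leq w \Leftrightarrow v(f)\leq w(f)$; any $v_\alpha\leq v_{\triv}$ forces $v_\alpha=v_{\triv}$, which is not divisorial. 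The same obstruction occurs at every trivial semivaluation $v_{Y,\triv}$ for a subvariety $Y\subsetneq X$ (and more generally the semivaluation case is delicate, since you want divisorial valuations on $X$ to converge to a valuation whose support is a strict subvariety, which forces $v_\alpha(f)\to+\infty$ for $f\in I_Y$ while remaining $\leq v$; this is not covered by rounding quasi-monomial weights down).

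So the argument as written proves the inequality only at valuations that admit such an approximating net, and the missing points $v_{Y,\triv}$ are exactly those where the maximum principle lives, so they cannot be discarded. To close this gap one must either (i) prove separately that $\phi(v_{Y,\triv})=\lim_{t\to0^+}\phi(t\,w)$ for any divisorial $w$ centered at the generic point of $Y$ — i.e.\ continuity of psh functions along scaling rays through the cone points — or (ii) follow Boucksom--Jonsson more closely by first establishing that $\phi$ restricted to each dual complex $\Delta(Y,D)$ is \emph{continuous} (not merely usc), so that density of rational points in $\Delta$, including a neighborhood of the apex $v_{\triv}$, gives the conclusion. Both are real theorems, not formalities: a pointwise decreasing limit of continuous functions is only usc, and upgrading this to continuity on dual complexes is where the substance of the BJ18b argument lies. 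You correctly flag the approximation step as the delicate point, but the obstruction at $v_{\triv}$ is not a matter of rationalizing weights from below; it is a structural minimum of the poset, and this needs to be addressed head-on.
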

In particular, as we have computed the effect of our operations on $X^{\Div}$, their behaviours on $X^{\An}$ are in fact completely determined.
\begin{corollary}
    The order relation,  addition by a constant, maximum, decreasing limit, rescaling and addition correspond to the corresponding operations in Boucksom--Jonsson's theory. Assume that the envelope conjecture holds for $X$, then the same holds for the supremum operation.
\end{corollary}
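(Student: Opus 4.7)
The plan is to apply \cref{thm:pshdetbydiv} to reduce every assertion to a comparison on divisorial valuations, where the effect of each operation has already been computed in the preceding section. For $\Gamma,\Gamma'\in \PSH^{\NA}(X,\theta)$ with images $\phi=\Gamma^{\An},\phi'=\Gamma'^{\An}\in \PSH(L^{\An})$, the Boucksom--Jonsson counterpart of each operation listed in the corollary (order, addition by a constant, maximum, decreasing limit along a directed set, rescaling by $\lambda>0$, addition of two metrics) admits an obvious pointwise definition on $X^{\An}$, or in the case of decreasing limits a decreasing-limit definition which stays in $\PSH(L^{\An})$ thanks to \cref{def:BJpshmetric}. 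Both sides of each claimed identity are therefore elements of $\PSH(L^{\An})$, and it is enough to compare them on $X^{\Div}$.

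The second step is to quote the formulas already established. Specifically, \cref{lma:valuationconc} handles the order relation, the identity $(\Gamma+C)^{\An}(v)=\Gamma^{\An}(v)+C$ handles addition by a constant, \cref{cor:napotmaxgen} handles the finite maximum, \cref{lma:decnetna} handles the decreasing limit, and the identities $(\lambda\Gamma)^{\An}(v)=\lambda\Gamma^{\An}(v)$ and $(\Gamma+\Gamma')^{\An}(v)=\Gamma^{\An}(v)+\Gamma'^{\An}(v)$ handle rescaling and addition. In every case the formula on the $\Gamma^{\An}$ side coincides tautologically with the restriction to $X^{\Div}$ of the corresponding Boucksom--Jonsson operation applied to $\phi$ and $\phi'$. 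Invoking \cref{thm:pshdetbydiv} promotes the equality from $X^{\Div}$ to $X^{\An}$; for the order relation the argument is symmetric and uses both implications of \cref{lma:valuationconc}.

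The main obstacle is the regularized supremum. In Boucksom--Jonsson's theory $\sups_i \phi_i$ is defined as the usc regularization of the pointwise supremum, and it is precisely the envelope conjecture that guarantees this regularization lies in $\PSH(L^{\An})$; without that hypothesis there is no candidate on the Boucksom--Jonsson side to compare with. Assuming the envelope conjecture holds for $X$, the BJ regularized supremum is the smallest psh metric dominating each $\phi_i$. On our side, $(\sups_i \Gamma^i)^{\An}$ is a psh metric whose values on $X^{\Div}$ equal $\sup_i \phi_i$ by \cref{thm:enve}. Consequently $(\sups_i \Gamma^i)^{\An}\geq \phi_i$ on $X^{\Div}$, hence on all of $X^{\An}$ by \cref{thm:pshdetbydiv}, so the BJ regularized supremum is dominated by $(\sups_i \Gamma^i)^{\An}$; the reverse inequality on $X^{\Div}$ is immediate, since the usc regularization of any function is at least that function. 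A final application of \cref{thm:pshdetbydiv} yields equality on $X^{\An}$, completing the argument.
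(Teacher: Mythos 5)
Your proof is correct and matches the paper's intended argument: the paper gives no formal proof of this corollary, treating it as immediate from \cref{thm:pshdetbydiv} together with the $X^{\Div}$-formulas established in the preceding section (specifically \cref{lma:valuationconc}, the constant-addition identity, \cref{cor:napotmaxgen}, \cref{lma:decnetna}, and the rescaling/addition identities), which is exactly your reduction. Your handling of the regularized supremum --- noting that the envelope conjecture is what supplies a Boucksom--Jonsson-side candidate, identifying it as the least psh metric dominating the family, and then comparing both sides on $X^{\Div}$ via \cref{thm:enve} and \cref{thm:pshdetbydiv} --- faithfully fills in the one step the paper leaves entirely implicit.
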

Briefly speaking, we have established the correspondence between the non-Archimedean pluripotential theory and the convex geometry of complex potentials as in \cref{tab:corr}.

Finally, we have the following comparison result between our theory of non-Archimedean potentials and Boucksom--Jonsson's.

\begin{corollary}\label{cor:BJconj}
    The following are equivalent:
    \begin{enumerate}
        \item The envelope conjecture of Boucksom--Jonsson holds for $(X,L)$;
        \item The comparison map $\Gamma\mapsto \Gamma^{\An}$ defined in \cref{def:napotentialdaleth} admits an extension as a bijection $\PSH^{\NA}(X,\theta)\cn \PSH(L^{\An})$.
    \end{enumerate}
\end{corollary}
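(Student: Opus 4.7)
The plan is to establish both implications by combining the compatibility of all pluripotential-theoretic operations with their convex-geometric counterparts (summarized in \cref{tab:corr}) together with \cref{thm:pshdetbydiv}, which ensures that psh metrics are determined by their values on $X^{\Div}$. Injectivity of $\Gamma\mapsto \Gamma^{\An}|_{X^{\Div}}$ has already been noted after \cref{def:napotentialdaleth}, so the content of the extension statement in (2) is the existence of a psh extension from $X^{\Div}$ to $X^{\An}$ for each test curve, together with surjectivity.

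For (2) $\Rightarrow$ (1), I would push the regularized supremum through the assumed bijection. Write $c\colon \PSH^{\NA}(X,\theta)\cn \PSH(L^{\An})$ for the bijection. Given a bounded-above family $(\phi_i)_{i\in I}$ in $\PSH(L^{\An})$, set $\Gamma^i\coloneqq c^{-1}(\phi_i)$; the values $\Gamma^i_{\max}$ are uniformly bounded since they coincide with $\phi_i(v_{\triv})$. Then $\Gamma\coloneqq \sups_{i\in I}\Gamma^i\in \PSH^{\NA}(X,\theta)$ is well defined, and \cref{thm:enve} gives $\Gamma^{\An}(v)=\sup_i \phi_i(v)$ on $X^{\Div}$. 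Setting $\phi\coloneqq c(\Gamma)$, \cref{lma:valuationconc} combined with \cref{thm:pshdetbydiv} forces $\phi\geq \phi_i$ for every $i$, and the same reasoning applied to any $\rho\in \PSH(L^{\An})$ dominating every $\phi_i$ yields $c^{-1}(\rho)\geq \Gamma^i$ for each $i$, hence $c^{-1}(\rho)\geq \Gamma$ and thus $\rho\geq \phi$. So $\phi$ is the regularized supremum of $(\phi_i)$, establishing the envelope conjecture.

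For (1) $\Rightarrow$ (2), I would proceed in two steps. First, given $\phi\in \PSH(L^{\An})$, produce a preimage $\Gamma\in \PSH^{\NA}(X,\theta)$ with $\Gamma^{\An}|_{X^{\Div}}=\phi|_{X^{\Div}}$: write $\phi$ as the decreasing limit of Fubini--Study functions $\phi_i\in \mathcal{H}^{\gf}_{\mathbb{Q}}(L_i^{\An})$ with $c_1(L_i)\to c_1(L)$ (\cref{def:BJpshmetric}); using the explicit piecewise linear construction from the proof of \cref{thm:comp_traceop} (passing to a resolution of singularities when needed for the $\mathcal{I}$-envelope step), attach to each $\phi_i$ a concave piecewise linear test curve $\Gamma^i\in \PSH^{\NA}(X,\theta+\epsilon_i\omega)_{>0}$ satisfying $\Gamma^{i,\An}|_{X^{\Div}}=\phi_i|_{X^{\Div}}$; assemble them via \cref{lma:canonicalident} and \cref{lma:decnetna} into $\Gamma\coloneqq \inf_i \Gamma^i\in \PSH^{\NA}(X,\theta)$ with $\Gamma^{\An}(v)=\phi(v)$ for every $v\in X^{\Div}$. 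Second, use the envelope conjecture to extend $\Gamma^{\An}$ from $X^{\Div}$ to a psh metric on $X^{\An}$: the upper-semicontinuous regularization of $\sup\{\psi\in \PSH(L^{\An}): \psi|_{X^{\Div}}\leq \Gamma^{\An}\}$ lies in $\PSH(L^{\An})$ by the envelope conjecture, and by \cref{thm:pshdetbydiv} it coincides with $\phi$ on all of $X^{\An}$. This provides the extended comparison map and its inverse.

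The hardest step will be the (1) $\Rightarrow$ (2) direction: in particular, controlling the construction of the test curves $\Gamma^i$ on the possibly singular $X$ while tracking the varying cohomology class $L_i\to L$, and justifying that the limit lies in the correct projective limit $\PSH^{\NA}(X,\theta)$. The envelope conjecture is indispensable in the second step, since without it $\Gamma^{\An}|_{X^{\Div}}$ is merely a function on divisorial valuations with no a priori guarantee of extending to a Boucksom--Jonsson psh metric. An alternative strategy would be to pass through a resolution $\pi\colon Y\to X$ and invoke \cref{thm:DXZ} on smooth $Y$ together with the functoriality of \cref{subsec:funct}, but descending the resulting psh metric on $Y^{\An}$ to $X^{\An}$ as a bona fide element of $\PSH(L^{\An})$ would still require the envelope conjecture on $X$.
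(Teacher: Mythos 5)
Your argument for $(2)\Rightarrow(1)$ is correct and matches the paper's: express each $\phi_i$ as $\Gamma^{i,\An}$, form $\sups_i\Gamma^i$, apply \cref{thm:enve}, and use \cref{lma:valuationconc} together with \cref{thm:pshdetbydiv} to identify the image under the bijection as the regularized supremum.

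The $(1)\Rightarrow(2)$ direction has a genuine gap. Your first step constructs, for each $\phi\in\PSH(L^{\An})$, an element $\Gamma\in\PSH^{\NA}(X,\theta)$ with $\Gamma^{\An}|_{X^{\Div}}=\phi|_{X^{\Div}}$; granting the technical assembly, this yields an injection $d\colon\PSH(L^{\An})\to\PSH^{\NA}(X,\theta)$. But statement (2) asserts a bijection $\PSH^{\NA}(X,\theta)\to\PSH(L^{\An})$ whose restriction to $X^{\Div}$ is $\Gamma\mapsto\Gamma^{\An}$, and for that you must show $d$ is \emph{surjective}: every $\Gamma\in\PSH^{\NA}(X,\theta)$, not just those produced in step one, has $\Gamma^{\An}|_{X^{\Div}}$ extending to a psh metric. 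Your second step does not supply this. The key phrase ``by \cref{thm:pshdetbydiv} it coincides with $\phi$ on all of $X^{\An}$'' presupposes a $\phi$ to compare against, which exists only for $\Gamma$ already in the image of $d$; for an arbitrary $\Gamma$, forming the envelope $\sups\{\psi\in\PSH(L^{\An}):\psi|_{X^{\Div}}\leq\Gamma^{\An}\}$ and invoking the envelope conjecture gives you a psh metric, but nothing forces its restriction to $X^{\Div}$ to be all of $\Gamma^{\An}$ rather than something strictly smaller. So the envelope conjecture has not been used in the place where it is actually needed.

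The ``alternative strategy'' you mention at the end and then set aside is in fact the paper's proof, and it is both shorter and complete: pass to a resolution $\pi\colon Y\to X$ and form the square with vertical maps $\pi^*$ and horizontal comparison maps. The left vertical map is a bijection unconditionally by \cref{subsec:funct}, the bottom horizontal map is a bijection by \cref{thm:DXZ} since $Y$ is smooth, and the right vertical map $\pi^*\colon\PSH(L^{\An})\to\PSH(\pi^*L^{\An})$ is a bijection precisely when the envelope conjecture holds, by \cite[Lemma~5.13]{BJ18b}. Composing gives the desired bijection, and this is exactly where the envelope conjecture enters: it controls descent of psh metrics under $\pi^*$, which is the content your second step was missing.
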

\begin{proof}
    By \cref{thm:pshdetbydiv}, if the map in (2) exists, it is necessarily unique.

    Let $\pi\colon Y\rightarrow X$ be a projective resolution. Consider the following diagram:
    \[
\begin{tikzcd}
\PSH^{\NA}(X,\theta) \arrow[r,"{\An}", dotted] \arrow[d,"\pi^*"] & \PSH(L^{\An}) \arrow[d,"\pi^*"] \\
\PSH^{\NA}(Y,\pi^*\theta) \arrow[r,"{\An}"]           & \PSH(\pi^*L^{\An}).          
\end{tikzcd}
\]
Clearly, (2) means that we can complete the dotted map as a bijection so that the diagram is commutative. We already know that the left-vertical map is a bijection, see \cref{subsec:funct}. The lower-horizontal map is also a bijection by \cref{thm:DXZ}. 

Now assume (1), then the right-vertical map is also a bijection by \cite[Lemma~5.13]{BJ18b}. Therefore, (2) follows. 

Conversely, assume that (2) holds. Take an increasing net $\phi_i\in \PSH^{\NA}(L^{\An})$ with $\phi_i\leq 0$. Recall that the envelope conjecture means that $\phi_i$ converges to some $\phi\in \PSH^{\NA}(L^{\An})$ with respect to the pointwise convergence topology on $X^{\Div}$. We write $\phi_i=\Gamma^{i,\An}$ for some $\Gamma^i\in \PSH^{\NA}(X,\theta)$. It suffices to take $\phi=(\sups_i \Gamma^i)^{\An} $ and apply \cref{thm:enve}.
\end{proof}

\bibliographystyle{amsalpha}
\bibliography{ComplexGeometry}

\end{document}